\newcommand{\eps}{\varepsilon}
\newcommand{\la}{\langle}
\newcommand{\ra}{\rangle}
\def\vp{\varphi}
\def\R{\mathbb{R}}
\newtheorem{theorem}{Theorem}
\newtheorem{lemma}[theorem]{Lemma}
\renewcommand{\leq}{\leqslant}
\renewcommand{\geq}{\geqslant}
\newcommand{\argmin}{\operatornamewithlimits{argmin}}
\newcommand{\argmax}{\operatornamewithlimits{argmax}}
\newcommand{\bm}[1]{\mathbf{#1}}
\def\one{\mathbf{1}}
\def\vp{\varphi}
\def\R{\mathbb{R}}
\newcommand{\sm}[2]{\begin{smallmatrix}\item1\\\item2 \end{smallmatrix}}
\def\la{\langle}
\def\ra{\rangle}
\newcommand\ddfrac[2]{\frac{\displaystyle \item1}{\displaystyle \item2}}
\def\eps{\varepsilon}
\newcommand{\mc}[1]{\mathbb{\item1}}
\newcommand{\circled}[1]{\raisebox{.5pt}{\textcircled{\raisebox{-.9pt} {#1}}}}
\def\e{\varepsilon}
\def\R{\mathbb{R}}
\def\U{\mathcal{U}}
\DeclareMathOperator{\spn}{span}
\DeclareMathOperator{\diag}{diag}
\DeclareMathOperator{\vectr}{vec}
\def\na#1{{\color{red}#1}} % Назарий
\def\sg#1{{\color{blue}#1}} % Сергей
\def\todo#1{{\color{red}TODO: #1}} % TODO
\icmltitlerunning{Accelerated Alternating Minimization, Accelerated Sinkhorn's Algorithm and Accelerated Iterative Bregman Projections}
\begin{document}

% \twocolumn[
\onecolumn
\icmltitle{On a Combination of Alternating Minimization and Nesterov's Momentum}

% It is OKAY to include author information, even for blind
% submissions: the style file will automatically remove it for you
% unless you've provided the [accepted] option to the icml2021
% package.

% List of affiliations: The first argument should be a (short)
% identifier you will use later to specify author affiliations
% Academic affiliations should list Department, University, City, Region, Country
% Industry affiliations should list Company, City, Region, Country

% You can specify symbols, otherwise they are numbered in order.
% Ideally, you should not use this facility. Affiliations will be numbered
% in order of appearance and this is the preferred way.
\icmlsetsymbol{equal}{*}

% \begin{icmlauthorlist}
\begin{center}
\icmlauthor{Sergey Guminov}{mipt,iitp,hse}
\icmlauthor{Pavel Dvurechensky}{wias,iitp,hse}
\icmlauthor{Nazarii Tupitsa}{mipt,iitp,hse}
\icmlauthor{Alexander Gasnikov}{mipt,iitp,hse}
    
\end{center}
% \icmlauthor{Fiuea Rrrr}{to}
% \icmlauthor{Tateu H.~Yasehe}{ed,to,goo}
% \icmlauthor{Aaoeu Iasoh}{goo}
% \icmlauthor{Buiui Eueu}{ed}
% \icmlauthor{Aeuia Zzzz}{ed}
% \icmlauthor{Bieea C.~Yyyy}{to,goo}
% \icmlauthor{Teoau Xxxx}{ed}
% \icmlauthor{Eee Pppp}{ed}
% \end{icmlauthorlist}

\icmlaffiliation{mipt}{Moscow Institute of Physics and Technology, Dolgoprudny, Russia}
\icmlaffiliation{iitp}{Institute for Information Transmission Problems RAS, Moscow, Russia}
\icmlaffiliation{wias}{Weierstrass Institute for Applied Analysis and Stochastics, Berlin, Germany}
\icmlaffiliation{hse}{HDI Lab @ National Research University Higher School of Economics, Russian Federation}

\icmlcorrespondingauthor{Nazarii Tupitsa}{tupitsa@phystech.edu}
\icmlcorrespondingauthor{Pavel Dvurechensky}{pavel.dvurechensky@wias-berlin.de}
% \icmlcorrespondingauthor{Eee Pppp}{ep@eden.co.uk}

% You may provide any keywords that you
% find helpful for describing your paper; these are used to populate
% the "keywords" metadata in the PDF but will not be shown in the document
\icmlkeywords{convex optimization, non-convex optimization, optimal transport}

\vskip 0.3in
% ]

% this must go after the closing bracket ] following \twocolumn[ ...

% This command actually creates the footnote in the first column
% listing the affiliations and the copyright notice.
% The command takes one argument, which is text to display at the start of the footnote.
% The \icmlEqualContribution command is standard text for equal contribution.
% Remove it (just {}) if you do not need this facility.

\printAffiliationsAndNotice{}  % leave blank if no need to mention equal contribution
% \printAffiliationsAndNotice{\icmlEqualContribution} % otherwise use the standard text.

\begin{abstract}
Alternating minimization (AM) procedures are practically efficient in many applications for solving convex and non-convex optimization problems. On the other hand, Nesterov's accelerated gradient is theoretically optimal first-order method for convex optimization. In this paper 
%\footnote{\na{The article was prepared within the framework of the HSE University Basic Research Program.}} 
we combine AM and Nesterov's acceleration to propose an accelerated alternating minimization algorithm. We prove $1/k^2$ convergence rate in terms of the objective for convex problems and $1/k$ in terms of the squared gradient norm for non-convex problems, where $k$ is the iteration counter. Our method does not require any knowledge of neither convexity of the problem nor function parameters such as Lipschitz constant of the gradient, i.e. it is adaptive to convexity and smoothness and is uniformly optimal for smooth convex and non-convex problems. Further, we develop its primal-dual modification for strongly convex problems with linear constraints and prove the same $1/k^2$ for the primal objective residual and constraints feasibility.
\end{abstract}

\section{Introduction}

Alternating minimization (AM) optimization algorithms have been known for a long time \cite{ortega1970iterative,bertsekas1989parallel}. These algorithms assume that the decision variable is divided into several blocks and minimization in each block can be done explicitly. AM algorithms have a number of applications in machine learning problems. For example, iteratively reweighted least squares can be seen as an AM algorithm. Other applications include robust regression \cite{mccullagh1989generalized} and sparse recovery \cite{daubechies2010iteratively}. The famous Expectation-maximization (EM) algorithm can also be seen as an AM algorithm \cite{mclachlan1996algorithm,andresen2016convergence}.

The initial motivation for this paper was accelerating algorithms for optimal transport (OT) applications, which are widespread in the machine learning community \cite{cuturi2013sinkhorn,cuturi2014fast,arjovsky2017wasserstein}. The ubiquitous Sinkhorn's algorithm can be seen as an alternating minimization algorithm for the dual to the entropy-regularized optimal transport problem. Recent Greenkhorn algorithm \cite{altschuler2017near-linear}, which is a greedy version of Sinkhorn's algorithm, is a greedy modification of an AM algorithm. For the Wasserstein barycenter \cite{agueh2011barycenters} problem, the extension of the Sinkhorn's algorithm is known as the Iterative Bregman Projections (IBP) algorithm \cite{benamou2015iterative}, which can be seen as an alternating minimization procedure \cite{kroshnin19a}. This motivated us to have a wider look on alternating minimization algorithms and try to accelerate general AM algorithm.

Sublinear $1/k$ convergence rate was proved for general AM algorithm for $n=2$ in \cite{beck2015convergence}.
Despite the same convergence rate as for the gradient method, AM-algorithms converge faster in practice as they are free of the choice of the step-size and are adaptive to the local smoothness of the problem. At the same time, there are accelerated gradient methods (AGM) which use a momentum term to have a faster convergence rate of $1/k^2$ \cite{nesterov1983method} and use gradient steps rather than block minimization. Our goal in this paper is to combine the idea of alternating minimization and momentum acceleration to propose an accelerated alternating minimization method. As applications of our general approach, we develop accelerated alternating least squares algorithm and apply it to a non-convex collaborative filtering problem, and propose accelerated Sinkhorn's algorithm for OT distances and accelerated Iterative Bregman Projections algorithm for Wasserstein barycenters.
%An attempt to obtain such a method is made in \cite{diakonikolas2018alternating}. In contrast to our work, the authors use full minimization only in \textit{one} block and the coordinate analogue of the gradient step in other blocks. Thus, their algorithm is not truly an AM algorithm. Unlike them, we use full minimization for all the blocks.

% \begin{wraptable}{r}{7cm}

% \begin{tabular}{lcccc} %{ |c|c|c|c|c|  } 
% \toprule
%  & block \# & Acc. & N-C & P-D \\ 
% \midrule
% \cite{beck2013convergence,saha2013nonasymptotic,beck2015convergence,sun2015improved} & any & $\times$ & $\times$ & $\times$ \\ 
% %\midrule
% AAR-BCD \cite{diakonikolas2018alternating} & any? & $\surd$ & $\times$ & ? \\ 
% %\midrule
% %\todo{AAM}& any & $\surd$ & $\surd$ & $\times$ \\ 
% %\midrule
% This paper  & any & $\surd$ & $\surd$ & $\surd$ \\ 
% \bottomrule
% \end{tabular}

% \end{wraptable}

% \begin{wraptable}{r}{6cm}
% \begin{tabular}{lcccc} %{ |c|c|c|c|c|  } 
% \toprule
% & Acc. & N-C & P-D \\ 
% \midrule
% \cite{beck2013convergence,saha2013nonasymptotic,beck2015convergence,sun2015improved} & $\times$ & $\times$ & $\times$ \\ 
% %\midrule
% AAR-BCD \cite{diakonikolas2018alternating}  & $\surd$ & $\times$ & ? \\ 
% %\midrule
% %\todo{AAM}& any & $\surd$ & $\surd$ & $\times$ \\ 
% %\midrule
% This paper   & $\surd$ & $\surd$ & $\surd$ \\ 
% \bottomrule
% \end{tabular}
% \end{wraptable}

\textbf{Related work.} Besides mentioned above works on AM algorithms, we mention \cite{beck2013convergence,saha2013nonasymptotic,sun2015improved}, where non-asymptotic convergence rates for AM algorithms for convex problems were proposed and their connection with cyclic coordinate descent was discussed, but the analyzed algorithms are not accelerated. Accelerated versions are known for random coordinate descent methods \cite{nesterov2012efficiency,lee2013efficient,shalev-shwartz2014accelerated,lin2014accelerated,fercoq2015accelerated,allen2016even,nesterov2017efficiency,alacaoglu2017smooth}, cyclic block coordinate descent \cite{beck2013convergence}, greedy coordinate descent \cite{lu2018accelerating}. %gasnikov2016acceleratуd,nesterov2017efficiency}. 
These ACD methods are designed for convex problems and use momentum term, but 
%instead of full minimization in blocks they make an assumption of availability of coordinate derivative. Moreover, 
they require knowledge of block-wise Lipschitz constants, i.e. are not parameter-free. 
A hybrid accelerated random block-coordinate method (AAR-BCD) with exact minimization in the last block was proposed in \cite{diakonikolas2018alternating} for convex problems. 
Unlike our greedy choice of the updated block they use random choice and the parameters of the algorithm depend on the block Lipschitz constants, meaning that AAR-BCD algorithm is not parameter-free.
An extension providing a two-block accelerated alternating minimization algorithm is available in the updated version \cite{diakonikolas2018alternatingPr} for the convex case. This method is deterministic and it is explained how to make it parameter-free.
At the same time neither of two algorithms from \cite{diakonikolas2018alternatingPr} have an analysis for non-convex problems or problems with linear constraints, yet it seems that such extensions are possible for their methods. We also underline that our definition of the algorithm parameters, in particular, the sequence $a_k$ in Algorithm \ref{AAM-2}, is different from theirs. 

%Yet, this is not done in the paper explicitly, we believe that this method can be equipped with primal-dual analysis. 
The summary of the related works on alternating minimization and coordinate methods is presented in the Table 1, where P-F stands for parameter-free, Acc. for accelerated, N-C for non-convex, P-D for primal-dual and B-N for number of blocks.

\renewcommand{\arraystretch}{1}

\begin{table}[ht]

% \vspace{-0.5cm}
\caption{Summary of the related works}
\label{T:AM}
\centering
% \resizebox{\columnwidth}{!}{%
\begin{tabular}{lcccccc} %{ |c|c|c|c|c|  } 
\toprule
& P-F & Acc. & N-C & P-D & B-N \\ 
\midrule
AM 
\footnote{
\cite{beck2013convergence, beck2015convergence} }& $\surd$ & $\times$ & $\times$ & $\times$ & 2\\ 
AM 
\footnote{
\cite{saha2013nonasymptotic, sun2015improved} }& $\surd$ & $\times$ & $\times$ & $\times$ & any\\ 
ACD \footnote{\cite{nesterov2012efficiency,lee2013efficient,fercoq2015accelerated, shalev-shwartz2014accelerated,allen2016even, nesterov2017efficiency, beck2013convergence, lu2018accelerating,lin2014accelerated,alacaoglu2017smooth}}
%\cite{nesterov2012efficiency,lee2013efficient,shalev-shwartz2014accelerated,lin2014accelerated,fercoq2015accelerated,allen2016even,nesterov2017efficiency,beck2013convergence,lu2018accelerating,alacaoglu2017smooth} 
&$\times$ &  $\surd$ & $\times$ & $\surd$&any\\
%\midrule
%AAR-BCD\footnote{ \cite{diakonikolas2018alternating}} & $\surd$ & $\surd$ & $\times$ & $\times$ & $\,\,\,\,\,\,$any$\,\,\,\,$\footnote{for a deterministic version of the algorithm}\\ 

AAR-BCD\footnote{ \cite{diakonikolas2018alternating}} & $\times$ & $\surd$ & $\times$ & $\times$ & $\,\,\,\,\,\,$any$\,\,\,\,$\\ 

AAM\footnote{ \cite{diakonikolas2018alternatingPr}} & $\surd$ & $\surd$ & $\times$ & $\times$ & $\,\,\,\,\,\,$2$\,\,\,\,$\\ 

%\midrule
%\todo{AAM}& any & $\surd$ & $\surd$ & $\times$ \\ 
%\midrule
This paper   & $\surd$ & $\surd$ & $\surd$ & $\surd$ & any\\ 
\bottomrule
\end{tabular}
% }
\end{table}

% \begin{table}[ht]

% \vspace{-0.5cm}
% \caption{Summary of literature}
% \label{T:AM}
% \centering
% % \resizebox{\columnwidth}{!}{%
% \begin{tabular}{lcccccc} %{ |c|c|c|c|c|  } 
% \toprule
% & P-F & Acc. & N-C & P-D & B-N \\ 
% \midrule
% AM 
% \footnote{
% \cite{beck2013convergence, beck2015convergence} }& $\surd$ & $\times$ & $\times$ & $\times$ & 2\\ 
% AM 
% \footnote{
% \cite{saha2013nonasymptotic, sun2015improved} }& $\surd$ & $\times$ & $\times$ & $\times$ & any\\ 
% ACD \footnote{\cite{nesterov2012efficiency,lee2013efficient,fercoq2015accelerated, shalev-shwartz2014accelerated,allen2016even, nesterov2017efficiency, beck2013convergence, lu2018accelerating,lin2014accelerated,alacaoglu2017smooth}}
% %\cite{nesterov2012efficiency,lee2013efficient,shalev-shwartz2014accelerated,lin2014accelerated,fercoq2015accelerated,allen2016even,nesterov2017efficiency,beck2013convergence,lu2018accelerating,alacaoglu2017smooth} 
% &$\times$ &  $\surd$ & $\times$ & $\surd$&any\\
% %\midrule
% %AAR-BCD\footnote{ \cite{diakonikolas2018alternating}} & $\surd$ & $\surd$ & $\times$ & $\times$ & $\,\,\,\,\,\,$any$\,\,\,\,$\footnote{for a deterministic version of the algorithm}\\ 

% AAR-BCD\footnote{ \cite{diakonikolas2018alternating}} & \na{$\surd$\footnote{for a deterministic version of the algorithm}} & $\surd$ & $\times$ & $\times$ & $\,\,\,\,\,\,$any$\,\,\,\,$\\ 

% %\midrule
% %\todo{AAM}& any & $\surd$ & $\surd$ & $\times$ \\ 
% %\midrule
% This paper   & $\surd$ & $\surd$ & $\surd$ & $\surd$ & any\\ 
% \bottomrule
% \end{tabular}
% % }
% \end{table}

\begin{table}[ht]

% \vspace{-0.6cm}
    \centering
   \caption{Summary of OT algorithms}
\label{T:OT}   
%   \resizebox{\columnwidth}{!}{%
\begin{tabular}{cc} 
\toprule
Algorithm & Complexity  \\ 
\midrule
Sinkhorn \cite{cuturi2013sinkhorn,dvurechensky2018computational}& ${N^2\|C\|^2_\infty}/{\varepsilon^2}$ \\
%\hline
Greenkhorn \cite{altschuler2017near-linear,lin2019efficient}& ${N^2\|C\|^2_\infty}/{\varepsilon^2}$   \\ 
%\hline
Randkhorn \cite{lin2019efficiency} & ${N^{7/3}\|C\|_\infty^{4/3}}/{\varepsilon}$ \\
%\hline
APDA(G/M)D \cite{dvurechensky2018computational,lin2019efficient}& ${N^{5/2}\|C\|_\infty}/{\varepsilon}$   \\ 
%\hline
Mirror-Prox \cite{jambulapati2019direct} & ${N^{2}\|C\|_\infty}/{\varepsilon}$  \\
%\hline
This paper & ${N^{5/2}\|C\|_\infty}/{\varepsilon}$ \\ 
\bottomrule
\end{tabular}
% }
% \vspace{-0.5cm}
\end{table}

Concerning the OT problem, the most used algorithm is  Sinkhorn's algorithm \cite{sinkhorn1974diagonal,cuturi2013sinkhorn}. Its complexity for the OT problem was first analyzed in \cite{altschuler2017near-linear} and improved in \cite{dvurechensky2018computational}.
%to $1/k$, which is expected in view of \cite{beck2015convergence} and the fact that Sinkhorn's algorithm can be seen as an AM algorithm. 
An accelerated gradient descent method in application to OT problem was also analyzed in \cite{dvurechensky2018computational} with a better dependence on $k$ in the rate, but worse dependence on the dimension of the problem, see also \cite{lin2019efficient}. \cite{altschuler2017near-linear} propose a greedy variant called Greenkhorn together with complexity analysis, which was improved in \cite{lin2019efficient}. 
In an unpublished preprint \cite{lin2019efficiency} the authors propose a randomized accelerated version of Sinkhorn's algorithm. We summarize the complexity of existing methods for OT in the Table 2. $N$ is the number of points in the histogram, $C$ is the transportation cost matrix, $\varepsilon$ desired accuracy.
The complexity of approximating Wasserstein barycenter was analyzed in \cite{kroshnin19a}, where the complexity by Iterative Bregman Projections algorithm and a variant of accelerated gradient method was obtained.
Previous works \cite{cuturi2014fast,benamou2015iterative,staib2017parallel,claici2018stochastic} did not give an explicit complexity bounds for approximating barycenter. 
But there are plenty of algorithms for approximating WB including accelerated gradient method plus Sinkhorn's algorithm \cite{cuturi2014fast}, gradient-type methods \cite{cuturi2016smoothed}, accelerated primal-dual gradient descent~\cite{dvurechensky2018decentralize,krawtschenko2020distributed}, stochastic gradient descent~\cite{claici2018stochastic,tiapkin2020stochastic}, distributed and parallel gradient descent~\cite{staib2017parallel,uribe2018distributed,rogozin2021decentralized}, alternating direction method of multipliers (ADMM)\cite{Ye-2017-Fast,Yang-2018-ADMM} and interior-point algorithm~\cite{Ge-2019-Interior}. Only recently the question of complexity got some answers. Namely, two approaches for approximating Wasserstein barycenter based on entropic regularization \cite{cuturi2013sinkhorn} were analyzed. The first approach is based on Iterative Bregman Projection (IBP) algorithm \cite{benamou2015iterative}, which can be considered as a general alternating projections algorithm and also as a generalization of the Sinkhorn's algorithm \cite{sinkhorn1974diagonal}. The second approach Primal-Dual Accelerateg Gradient Descent (PDAGD) is based on constructing a dual problem and solving it by primal-dual accelerated gradient descent. For both approaches, it was shown, how the regularization parameter should be chosen in order to approximate the original, non-regularized barycenter.
In \cite{2020arXiv200204783L} the authors proposed a variant of the Iterative Bregman Projection (IBP) algorithm, which they called FastIBP.
Very recently \cite{dvinskikh2021improved} provided two algorithms to compute Wasserstein barycenter, one of them has the best theoretical convergence guarantees.

We summarize the known complexity bounds from the literature in Table \ref{T:IBP}. We underline that despite many advantages of the entropic regularization, in some situations other reguarizations provide more robust results \cite{blondel2017smooth}. Our proposed method is flexible enough to allow efficient computations with regularizers other than entopic both for OT and WB problems.

% \label{T:IBP}
\begin{table}[ht]
% \vspace{-0.5cm}

    \centering
   \caption{Summary of algorithms for Wasserstein barycenters}
   \label{T:IBP}
% \resizebox{\columnwidth}{!}{%
\begin{tabular}{cc} 
\toprule
Algorithm & Complexity  \\ 
\midrule
IBP \cite{benamou2015iterative,kroshnin19a}& ${mN^{2}\|C\|^2_\infty}/{\varepsilon^2}$ \\
PDAGD \cite{kroshnin19a}& ${mN^{5/2}\|C\|_\infty}/{\varepsilon}$  \\
%\hline
FastIBP \cite{2020arXiv200204783L} & ${mN^{7/3}\|C\|_\infty^{4/3}}/{\varepsilon^{4/3}}$ \\
Area Convexity \cite{dvinskikh2021improved} & ${mN^{2}\|C\|_\infty/\varepsilon}$ \\
Mirror-Prox \cite{dvinskikh2021improved} & ${mN^{5/2}\|C\|_\infty}/{\varepsilon}$ \\

This paper & ${mN^{5/2}\|C\|_\infty}/{\varepsilon}$ \\ 

\bottomrule
\end{tabular}
% }
%\vspace{-0.5cm}
\end{table}

\textbf{Our contributions.} For objectives with $n$ blocks of variables we introduce an accelerated alternating minimization method with $O(n/k^2)$ convergence rate for the objective values in smooth unconstrained convex problems and $O(n/k)$ convergence rate in terms of the squared norm of the gradient both for convex and non-convex smooth unconstrained problems.
Thus, in terms of the dependence on the iteration counter $k$ our algorithm achieves uniformly the best possible rates in convex case (same as for AGM) and in non-convex case (same as for gradient descent (GD)). Moreover, the algorithm automatically adapts to convexity and smoothness: it is completely the same for convex and non-convex settings and does not need to know in advance whether the problem is convex or not, i.e. is uniform for smooth convex and non-convex problems; it does not need to know the Lipschitz constant of the gradient, i.e. is parameter-free. Parameter-free versions exist also for AGM and GD (see, e.g. \cite{nesterov2013gradient}), but they are based on a different idea of backtracking line-search and do not explore the block structure of the problem and block minimization for acceleration in practice. 
%
%is parameter-free, meaning that it automatically adapts to convexity and smoothness, in contrast to standard accelerated gradient/coordinate methods, which need to know the gradient/block Lipschitz constant. 

% \begin{wraptable}{l}{11cm}
% \begin{center}
% \begin{tabular}{ |c|c|c|c|c|c|  } 
% \hline
%  & \# of blocks & acceleration & non-convex & primal-dual \\ 
% \hline
% AAR-BCD \cite{diakonikolas2018alternating} & any & + & - & + \\ 
% \hline
% \todo{AAM}& any & + & + & - \\ 
% \hline
% this paper  & any & + & + & + \\ 
% \hline
% \end{tabular}
% \end{center}
% \end{wraptable}
%Thus, we provide a uniform accelerated alternating minimization method for smooth convex and non-convex problems with uniformly optimal convergence rates, which does not need to know the Lipschitz constant of the gradient, i.e. is parameter-free. Moreover, it does not need to know whether the problem is convex or not and \na{converges with best possible rates in terms of $k$ uniformly for convex and non-convex problems (without prior knowledge of convexity).} It automatically adapts to convexity and smoothness, in contrast to standard accelerated gradient/coordinate methods, which need to know the gradient/block Lipschitz constant. 
The main idea of our algorithm is to combine block-wise minimization and the extrapolation (also known as momentum) step which is usually used in accelerated gradient methods. We also show that in the convex setting the proposed method is primal-dual, meaning that if we apply it to a dual problem for a linearly constrained strongly convex problem, we can reconstruct the solution of the primal problem with the same convergence rate. In the follow-up work \cite{tupitsa2021alternating} a modification of AAM is proposed and analyzed for strongly convex problems.

To highlight the new properties of our method, the proven convergence rate for non-convex problems and the primal-dual analysis, we consider two particular applications. First, we consider a non-convex collaborative filtering problem and show empirically that our algorithm outperforms the standard alternating least squares algorithm. Second, we apply it to the dual entropy-regularized OT problem to obtain the Accelerated Sinkhorn's algorithm. The Primal-dual analysis is crucial here since the goal is to find the transportation plan, i.e. the primal variable, by solving the dual problem. Our method has complexity comparable to the existing methods and in the experiments, we show that our general method outperforms specific baselines for this problem, including Sinkhorn's algorithm. Importantly, we use a non-standard formulation of the dual entropy-regularized OT problem in the form of minimization of a softmax function. Moreover, \textit{our algorithm is more flexible since it can solve OT problems with other types of regularization, e.g. by squared Euclidean norm.}  
% We also propose a simpler-to-implement variant of the primal-dual method, apply it to the dual of the entropy-regularized optimal transport problem, estimate its complexity for this problem and show experimentally, that it outperforms Sinkhorn's algorithm \na{and other famous methods. We point out, that in contrast with specialized methods (like Greenkhorn), our method is comprehensive for alternating minimization optimization in a sense that it deals with non-convexity and primal-duality, and, throught it all, possesses better performance.} In some sense, our algorithm can be considered as an accelerated Sinkhorn's algorithm. The complexity to find a distance between two histograms of size $N$ is $\widetilde{O}\left(N^{2.5}/\eps\right)$ which is better than the bound $\widetilde{O}\left(N^{2}/\eps^2\right)$ for the standard Sinkhorn's algorithm \cite{dvurechensky2018computational} in terms of $\eps$. Importantly, we use a non-standard formulation of the dual OT problem in the form of minimization of a softmax function. \na{But our method also allows to deal with quadratic regularization instead of entropic.}
Finally, in the supplementary, we apply our accelerated primal-dual AM algorithm to the Wasserstein Barycenter (WB) problem and propose an accelerated Iterative Bregman Projection algorithm with the complexity $\widetilde{O}\left(mN^{2.5}/\eps\right)$ to find a barycenter of $m$ histograms of dimension $N$. This bound is better than the complexity bound for the standard Iterative Bregman Projection algorithm \cite{kroshnin19a} $\widetilde{O}\left(mN^{2}/\eps^2\right)$ in terms of $\eps$. 
% , and also an accelerated alternating minimization method with a $1/k$ convergence rate in terms of the squared norm of the gradient both for convex and non-convex problems. The main idea is to combine block-wise minimization and the extrapolation (also known as momentum) step which is usually used in accelerated gradient methods. We also show that the proposed method is primal-dual, meaning that if we apply it to a dual problem, we can reconstruct the solution of the primal problem with the same convergence rate. We apply our method to the dual of the entropy-regularized optimal transport problem and show experimentally, that it outperforms Sinkhorn's algorithm. In some sense, our algorithm can be considered as an accelerated Sinkhorn's algorithm. Importantly, we use a non-standard formulation of the dual OT problem in the form of softmax function.
In the follow-up paper \cite{tupitsa2020multimarginal} the AAM method is applied to a more general multimarginal optimal transport problem and complexity estimates are obtained that are better in some regimes than the ones in the literature.

\textbf{Paper organization.} In Sect. \ref{S:AAM} we consider the general setting of minimizing a smooth objective function using block minimization. We introduce our uniform accelerated alternating minimization (AAM) method for convex and non-convex problems together with its primal-dual modification for convex linearly constrained problems. In Sect. \ref{S:primal-dual} we study the primal-dual properties of the method. In Sect.\ref{S:als} we discuss the application of our method to the collaborative filtering problem and provide experiments on the \href{https://www.upf.edu/web/mtg/lastfm360k}{Last.fm dataset 360K} for the collaborative filtering problem. In Sect. \ref{S:ASA} we describe the OT and the WB problems and their entropy-regularized versions, together with the dual for the latters, that are non-standard. Then, we propose the Accelerated versions of Sinkhorn's algorithm and IBP algorithm and obtain their theoretical complexity, \iffalse{In Sect. \ref{S:AIBP} we apply the approach taken in the previous section to study the regularized Wasserstein Barycenter problem.} \fi and provide the results of numerical experiments on MNIST dataset for both problems and additionally provide experiments for WB problem with Gaussian measures. The proofs of all stated results, the explicit form of algorithms and the application of the proposed methods to the regularized Wasserstein Barycenter problem may be found in the supplement.
In Section~\ref{S:LS} we provide numerical experiment for least squares problem for linear regression.\footnote{Code for all presented algorithms is available at \url{https://github.com/nazya/AAM}}

\section{Accelerated Alternating Minimization}
\label{S:AAM}
% \begin{wrapfigure}{r}{0.5\textwidth}
% \vspace{-0.5cm}
% \begin{minipage}{\columnwidth}
% \begin{algorithm}[H]
% \caption{Accelerated Alternating Minimization (AAM)}
% \label{AAM-2}
% \begin{algorithmic}[1]
%   \REQUIRE Starting point $x_0$.
%     \ENSURE $x^k$
%   \STATE Set $A_0=0$, $x^0 = v^0$.
%   \FOR{$k \geqslant 0$}
% 	\STATE Set $\beta_k = \argmin\limits_{\beta\in [0,1]} f\left(x^k + \beta (v^k - x^k)\right)$
% 	\STATE Set $y^k = x^k + \beta_k (v^k - x^k)\quad $
%     \STATE Choose $i_k=\argmax\limits_{i\in\{1,\ldots,n\}} \|\nabla_i f(y^k)\|_2^2$
% \STATE Set $x^{k+1}=\argmin\limits_{x\in S_{i_k}(y^k)} f(x)$\quad 
% \STATE Find $a_{k+1}$, $A_{k+1} = A_{k} + a_{k+1}$ from \[f(y^k)-\frac{a_{k+1}^2}{2A_{k+1}}\|\nabla f(y^k)\|_2^2=f(x^{k+1})\]\\
% %\STATE  Set $A_{k+1} = A_{k} + a_{k+1}$
% % \STATE Find largest $a_{k+1}$ from the quadratic equation \[f(y^k)-\frac{a_{k+1}^2}{2(A_k+a_{k+1})}\|\nabla f(y^k)\|_2^2=f(x^{k+1})\]\\
% % \STATE  Set $A_{k+1} = A_{k} + a_{k+1}$
% %\STATE  Set $\psi_{k+1}(x) =  \psi_{k}(x) + a_{k+1}\{f(y^k) + \langle \nabla f(y^k), x - y^k \rangle\}$
% \STATE Set $v^{k+1} = v^{k}-a_{k+1}\nabla f(y^k)$ 
% %\STATE $k = k + 1$
% \ENDFOR
% \end{algorithmic}
% \end{algorithm}
% \end{minipage}
% \vspace{-0.2cm}
% \end{wrapfigure}

In this section we consider the minimization problem 
$\min\limits_{x\in \mathbb{R}^N}f(x),$
where $f(x)$ is continuously differentiable and, in general \textit{non-convex},
 $L$-smooth function, the latter meaning that its gradient is $L$-Lipschitz, i.e. $\forall\ x,\ y\in \mathbb{R}^N\quad \|\nabla f(x)-\nabla f(y)\|_2\leqslant L\|x-y\|_2$.
We assume that the space is equipped with the Euclidean norm $\|\cdot\|_2$ and that the problem has at least one solution, denoted by $x^*$. The set $\{1,\ldots, N\}$ of indices of the basis vectors $\{e_i\}_{i=1}^N$ is divided into $n$ disjoint subsets (blocks) $I_p$, $p\in\{1,\ldots,n\}$. Let $S_p(x)=x+\spn\{e_i:\ i\in I_p\}$, i.e. the affine subspace containing $x$ and all the points differing from $x$ only over the block $p$. We use $x_i$ to denote the components of $x$ corresponding to the block $i$ and $\nabla_i f(x)$ to denote the gradient corresponding to the block $i$. We will further require that for any $p\in\{1,\ldots,n\}$ and any $z\in\mathbb{R}^N$ the problem $\min\limits_{x\in S_p(z)} f(x)$ has a solution, and this solution is easily computable.  %We call such functions $L$-smooth.

% \ag{I guess we should compare the the results of this paper with our two previous OMS papers. And also our ICML2018 paper.}
% \iffalse For the general case of number of blocks $n\geqslant 2$ the Alternating Minimization algorithm may be written as Algorithm \ref{AM}. There are multiple common block selection rules, such as the cyclic rule or the Gauss--Southwell rule. More generally, it is also possible to update more than one block on each iteration \cite{hong2016unified}.

% %\begin{wrapfigure}{r}{0.4\textwidth}
% %\begin{minipage}{0.4\textwidth}
% \begin{algorithm}
% \caption{Alternating Minimization}
% \label{AM}
% %{\small
% \begin{algorithmic}[1]
%   \REQUIRE Starting point $x_0$.
%     \ENSURE $x^k$
%     \FOR{$k \geqslant 0$}
% 	\STATE Choose $i\in{1,\ldots,n}$
% 	\STATE Set $x^{k+1}=\argmin\limits_{x\in S_{i}(x^{k})} f(x)$
% %\STATE $k = k + 1$ \sg{Do we need this in a for loop?} \pd{I think yes.}
% \ENDFOR
% \end{algorithmic}
% %}
% \end{algorithm}
% %\end{minipage}
% %\end{wrapfigure}
% % In \cite{beck2015convergence} it is shown that under the above assumptions the AM algorithm has an $O(1/k)$ convergence rate, and the assumptions about the smoothness of the objective may be relaxed. The AM algorithm can also be generalized to the case $n>2$ \cite{hong2016unified}\sg{need a good reference!}.
% % Our second accelerated alternating minimization algorithm uses an exact minimization over an interval instead of the classic extrapolation step
% \fi

% \vspace{-0.5cm}
% \begin{minipage}{\columnwidth}
\begin{algorithm}[H]
\caption{Accelerated Alternating Minimization (AAM)}
\label{AAM-2}
\begin{algorithmic}[1]
   \REQUIRE Starting point $x_0$.
    \ENSURE $x^k$
   \STATE Set $A_0=0$, $x^0 = v^0$.
   \FOR{$k \geqslant 0$}
	\STATE Set $\beta_k = \argmin\limits_{\beta\in [0,1]} f\left(x^k + \beta (v^k - x^k)\right)$
	\STATE Set $y^k = x^k + \beta_k (v^k - x^k)\quad $
    \STATE Choose $i_k=\argmax\limits_{i\in\{1,\ldots,n\}} \|\nabla_i f(y^k)\|_2^2$
\STATE Set $x^{k+1}=\argmin\limits_{x\in S_{i_k}(y^k)} f(x)$\quad 
\STATE Find $a_{k+1}$, $A_{k+1} = A_{k} + a_{k+1}$ from \[f(y^k)-\frac{a_{k+1}^2}{2A_{k+1}}\|\nabla f(y^k)\|_2^2=f(x^{k+1})\]\\
%\STATE  Set $A_{k+1} = A_{k} + a_{k+1}$
% \STATE Find largest $a_{k+1}$ from the quadratic equation \[f(y^k)-\frac{a_{k+1}^2}{2(A_k+a_{k+1})}\|\nabla f(y^k)\|_2^2=f(x^{k+1})\]\\
% \STATE  Set $A_{k+1} = A_{k} + a_{k+1}$
%\STATE  Set $\psi_{k+1}(x) =  \psi_{k}(x) + a_{k+1}\{f(y^k) + \langle \nabla f(y^k), x - y^k \rangle\}$
\STATE Set $v^{k+1} = v^{k}-a_{k+1}\nabla f(y^k)$ 
%\STATE $k = k + 1$
\ENDFOR
\end{algorithmic}
\end{algorithm}
% \end{minipage}

Our accelerated alternating minimization method is listed as Algorithm~\ref{AAM-2}. %based on the AGMsDR method\cite{nesterov2018}% добавим если примут эту ссылку 
This algorithm combines AM and Nesterov’s momentum and, thus, a full-gradient step 8 is inherited and AM updates are used for faster empirical convergence than AGD. In some sense this is similar to AM compared to gradient descent: theoretical rates are the same, but AM has practical benefits. At the same time, full gradient step 8 is not more expensive than other steps. For example, in the OT applications, full gradient costs nearly the same as block minimization.  
We underline that Algorithm~\ref{AAM-2} does not require knowledge of whether the function is convex or non-convex and does not require knowledge of any parameters of the function. The latter is in contrast to standard accelerated gradient descent \cite{nesterov2004introduction}, accelerated random coordinate descent \cite{nesterov2012efficiency,lee2013efficient,shalev-shwartz2014accelerated,lin2014accelerated,fercoq2015accelerated,allen2016even,nesterov2017efficiency}, accelerated cyclic block coordinate descent \cite{beck2013convergence}, accelerated greedy coordinate descent \cite{lu2018accelerating}, all of which require the knowledge of either the constant $L$ or block-wise Lipschitz constants. Our method is also different from parameter-free versions of AGM that use a backtracking line-search as, e.g., in \cite{nesterov2013gradient}. Parameter-free nature of our method is achieved by applying steps 3 and 7. In standard methods $a_k$ is defined by an equation containing $L$ and $\beta_k$ is defined based on $a_k$.
% which uses a univariate minimization to find the step size for the extrapolation step, leading to a parameter-free nature of the method. We underline that, although this method shares some similarities with existing ones, it does differ significantly. We use a greedy approach to determine the block which is updated, unlike how it is usually done in random coordinate descent methods. At the same time, the difference with the greedy coordinate descent is twofold: a) we have a momentum term, b) we use full relaxation in the block instead of a coordinate step. 
% %Unlike \cite{diakonikolas2018alternating}, we use full minimization in all blocks rather than in the last one. 
% Block-wise minimization differentiates our methods from standard accelerated gradient descent methods.
We prove that in the case when $f$ is convex and $L$-smooth, our method has the accelerated $O(n/k^2)$ rate for the objective residual and, for a general setting of possibly non-convex $L$-smooth functions it guarantees that the squared norm of the gradient decreases as $O(n/k)$. Importantly, the obtained convergence rate in the convex case is $n$ times better than the rate for accelerated random coordinate descent \cite{nesterov2012efficiency}, which is $O(n^2/k^2)$.
%Notably, the method is the same for both settings meaning that it is uniformly optimal for convex and non-convex optimization. 
The main convergence rate theorem for Algorithm~\ref{AAM-2} is as follows.

\begin{theorem}
\label{Th:AAM2-main}
a) Assume that $f$ is (possibly non-convex) $L$-smooth function w.r.t. $\|\cdot\|_2$. Then, after $k$ steps of Algorithm \ref{AAM-2},
$$
\min_{i=0,...,k}\| \nabla f(y^i)\|_2^2 \leqslant \frac{2nL(f(x^0) - f(x^*))}{k}.
$$
b) Assume additionally that $f$ is convex. Then, after $k$ steps of Algorithm \ref{AAM-2},
\begin{align*}
    f(x^{k})-f(x^*) \leqslant \frac{2nL\|x^0-x^*\|_2^2}{k^2}.
\end{align*}
\end{theorem}
\begin{proof}[Proof of Theorem~\ref{Th:AAM2-main}, a)] 
%%%%%%%%%%%%%
$L$-smoothness of $f$ together with the fact that $x^{k+1}=\argmin_{x\in S_{i_k}(y_k)} f(x)$ where $i_k=\argmax_{i} \|\nabla_i f(y^k)\|_2^2$
implies
\[
    f(y^k)-\frac{1}{2L}\|\nabla_{i_k} f(y^k)\|_2^2\geqslant f(x^{k+1}).
\]
    
Since $i_k=\argmax_{i} \|\nabla_i f(y^k)\|_2^2$ we have that $$\|\nabla_{i_k} f(y^k)\|_2^2\geqslant \frac{1}{n}\|\nabla f(y^k)\|_2^2$$ and

%%%%%%%%%%%%%
% We have that 
\begin{equation*}
    f(x^{k+1})\leqslant f(y^k)-\frac{1}{2nL}\|\nabla f(y^k)\|_2^2
    \\
    \leqslant f(x^k)-\frac{1}{2nL}\|\nabla f(y^k)\|_2^2.
\end{equation*}
Summing this up for $i=0,\ldots, k$, we obtain 
\begin{equation*}
    f(x^0)-f(x^*)\geqslant f(x^{0})-f(x^{N+1})
    \\
    \geqslant \frac{k}{2nL}\min_{i=0,\ldots, k}\|\nabla f(y^i)\|^2_2.
\end{equation*}
Consequently, we may guarantee $\min\limits_{i=0,\ldots, k}\|\nabla f(y^i)\|_2^2\leqslant \frac{2nL(f(x^0)-f(x^*))}{k}.$
\end{proof}

To prove the part b) of Theorem~\ref{Th:AAM2-main} we firstly state an auxiliary lemma.
Let us introduce an auxiliary  sequence of functions defined as $\psi_0(x)=\frac{1}{2}\|x-x^0\|^2,\    \psi_{k+1}(x) = \psi_{k}(x) + a_{k+1}\{f(y^k) + \langle \nabla f(y^k), x - y^k \rangle\}.
$
It is easy to see that $v^{k}=\argmin\limits_{x\in\mathbb{R}^N} \psi_{k}(x).$

%\na{add proof to supplement}
\begin{lemma} 
\label{AAM-2_Ak_rate}
After $k$ steps of Algorithm \ref{AAM-2} it holds that
\begin{equation}
    \label{eq:main_recurrence}
    A_{k}f(x^{k}) \leqslant \min_{x \in \mathbb{R}^N} \psi_{k}(x) = \psi_{k}(v^{k}).
\end{equation}
Moreover, %if the objective is $L$-smooth 
$A_k \geqslant\frac{k^2}{4Ln}$, where $n$ is the number of blocks.
\end{lemma}

\begin{proof}[Proof of Theorem~\ref{Th:AAM2-main} b).]
From the convexity of $f(x)$ we have
$
\frac{1}{A_k}\sum_{i=0}^{k-1} a_{k+1}(f(y^k)+\langle\nabla f(y^k),x-y^k\rangle)\leqslant f(x^*).
$ From Lemma~\ref{AAM-2_Ak_rate}, using the standard argument \cite{nesterov2005smooth}, we have 
\begin{multline*}
    A_k f(x^k)\leqslant\psi_{k}(v^k)\leqslant\psi_k(x^*)
    =\frac{1}{2}\|x^*-x^0\|_2^2 +\sum_{i=0}^{k-1} a_{i+1}(f(y^i)+\langle\nabla f(y^i),x^*-y^i\rangle)
    \\
    \leqslant A_kf(x^*)+\frac{1}{2}\|x^*-x^0\|_2^2.
\end{multline*}
Since $A_k\geqslant \frac{k^2}{4nL},$ we finally obtain the statement of the theorem $f(x^{k})-f(x^*) \leqslant \frac{2nL\|x^*-x^0\|_2^2}{k^2}.$
\end{proof}
The obtained rate leads to complexity $O(\sqrt{n/\eps})$ to achieve accuracy $\eps$ in terms of the objective. As we show below, for the collaborative filtering problem and optimal transport problem $n=2$ and our accelerated method provides acceleration from complexity $O(1/\eps)$ of existing AM methods to the better complexity $O(1/\sqrt{\eps})$.

\section{Primal-Dual Extension}
\label{S:primal-dual}
In this section we consider the primal-dual (up to a sign) pair of minimization problems 
\begin{align*}
(P_1) &  \min_{x\in Q \subseteq E} \left\{ f(x) : \bm{A}x =b \right\},
\\
(P_2) & \min_{\lambda \in \Lambda} \left\{   \phi(\lambda)=\la \lambda, b \ra  + \max_{x\in Q} \left( -f(x) - \la \bm{A}^T \lambda  ,x \ra \right) \right\},
\end{align*}

% \begin{wrapfigure}{r}{0.55\textwidth}
% \vspace{-2.3cm}
% \begin{minipage}{\columnwidth}
\begin{algorithm}[ht]
\caption{Primal-Dual AAM}%Accelerated Alternating Minimization 2}
\label{PDAAM-2}
%{\small
\begin{algorithmic}[1]
   \STATE $A_0=a_0=0$, $\eta_0=\zeta_0=\lambda_0=0$.
   \FOR{$k \geqslant 0$}
   \STATE Set $\beta_k = \argmin\limits_{\beta\in [0,1]} \phi\left(\eta^k + \beta (\zeta^k - \eta^k)\right)$
\STATE Set $\lambda^{k}=\beta_k \zeta^k+(1-\beta_k)\eta^k$
\STATE Choose $i_k=\argmax\limits_{i\in\{1,\ldots,n\}} \|\nabla_i \phi(\lambda^k)\|_2^2$
\STATE Set $\eta^{k+1}=\argmin\limits_{\eta\in S_{i_k}(\lambda^k)} \phi(\eta)$
\STATE Find $a_{k+1}$, $A_{k+1} = A_{k} + a_{k+1}$  from  \[\phi(\lambda^k)-\frac{a_{k+1}^2}{2(A_k+a_{k+1})}\|\nabla \phi(\lambda^k)\|_2^2=\phi(\eta^{k+1})\]\\
% \STATE  Set 
\STATE Set $\zeta^{k+1} = \zeta^{k} - a_{k+1}\nabla \phi(\lambda^k)$
\STATE Set $\hat{x}^{k+1} = \frac{a_{k+1}x(\lambda^{k})+A_k\hat{x}^{k}}{A_{k+1}}.$

%\STATE $k = k + 1$
\ENDFOR
\ENSURE The points $\hat{x}^{k+1}$, $\eta^{k+1}$.
\end{algorithmic}
%}
\end{algorithm}
% \end{minipage}
% \vspace{-0.3cm}
% \end{wrapfigure}

where $E$ is a finite-dimensional real vector space, $Q$ is a simple closed convex set, $f$ is a $\gamma$-strongly convex function, $\bm{A}$ is a given linear operator from $E$ to some finite-dimensional real vector space $H$, $b \in H$ is given, $\Lambda=H^*$ is the conjugate space.
% Here we denote $\Lambda=H^*$.
% Here we denote $\Lambda =\{\lambda = (\lambda^{(1)},\lambda^{(2)})^T \in H_1^* \times H_2^*: \lambda^{(2)} \in K^*\}$. 
% It is convenient to rewrite Problem $(D_1)$ in the equivalent form of a minimization problem
% \begin{align}
% & (P_2) \quad \min_{\lambda \in \Lambda} \left\{   \vp(\lambda)=\la \lambda, b \ra  + \max_{x\in Q} \left( -f(x) - \la \bm{A}^T \lambda  ,x \ra \right) \right\}. \notag
% %\label{eq:P_2}
% \end{align}

% We denote
% \begin{equation}
% \vp(\lambda) =  \la \lambda, b \ra  + \max_{x\in Q} \left( -f(x) - \la A^T \lambda  ,x \ra \right).
% \label{eq:vp_def}
% \end{equation}
Since $f$ is convex, $\phi(\lambda)$ is a convex function and, by Danskin's theorem, its subgradient is equal to% (see e.g. \cite{nesterov2005smooth})
\begin{equation}
\nabla \phi(\lambda) = b - \bm{A} x (\lambda),
% \left(
% \begin{aligned}
% b_1 - A_1 x (\lambda)\\
% b_2 - A_2 x (\lambda)\\
% \end{aligned}
%  \right),
\label{eq:nvp}
\end{equation}
where $x (\lambda)$ is some solution of the convex problem
\begin{equation}
    \max_{x\in Q} \left( -f(x) - \la \bm{A}^T \lambda  ,x \ra \right).
    \label{eq:inner}
\end{equation}

In what follows, we assume that $H$ is equipped with the Euclidean norm, $\phi(\lambda)$ is $L$-smooth and that the problem $(P_2)$ has a solution $\lambda^*$ and there exist some $R>0$ such that $\|\lambda^{*}\|_{2} \leqslant R$. We underline that the quantity $R$ will be used only in the convergence analysis, but not in the algorithm itself.
% we make the following assumptions about the dual problem $(D_1)$
% \begin{itemize}
%     \item The gradient of the objective function $\vp(\lambda)$ is $L$-Lipschitz.
%     \item The dual problem $(D_1)$ has a solution $\lambda^*$ and there exist some $R>0$ such that
% 	\begin{equation}
% 	\|\lambda^{*}\|_{2} \leqslant R < +\infty. 
% 	\label{eq:l_bound}
% 	\end{equation}
% \end{itemize}
% \iffalse\todo{Change to AAM2 as the basis algo}\sg{Done}\fi
Our primal-dual algorithm based on Algorithm~\ref{AAM-2} for the pair $(P_1)$-$(P_2)$ is listed as Algorithm \ref{PDAAM-2}.

The key result for this method is that it guarantees convergence in terms of the constraints and the duality gap for the primal problem, provided that the primal objective is strongly convex. The rate of convergence and complexity remain the same as for Algorithm~\ref{AAM-2}.
\begin{theorem}
\label{PD-bounds}
Let the objective $f(x)$ in the problem $(P_1)$ be $\gamma$-strongly convex w.r.t. $\|\cdot\|_E$, and let $\|\lambda^*\| \leqslant R$. Then, for the sequences $\hat{x}^{k},\eta^{k}$, $k\geqslant 0$, generated by Algorithm \ref{PDAAM-2}, 
\begin{align}
    &|\phi(\eta^k) + f(\hat{x}^k)| \leqslant\frac{8n\|\bm{A}\|^2_{E\to H}R^2}{\gamma k^2}, 
    \\
    &\|\bm{A} \hat{x}^k - b \|_2 \leqslant \frac{8n\|\bm{A}\|^2_{E\to H}R}{\gamma k^2},
    \\
    &\|\hat{x}^k-x^*\|_E \leqslant \frac{4n\|\bm{A}\|_{E\to H}R}{\gamma k}
    \label{eq:APDAAM_bound}
\end{align}
where $\|\bm{A}\|_{E\to H}$ is the norm of $\bm{A}$ as a linear operator from $E$ to $H$, i.e. $\|\bm{A}\|_{E \rightarrow H} = \max_{u,v} \left\{{\la Au, v\ra : \|u\|_E = 1, \|v\|_H = 1}\right\}$, and $\|\cdot\|_H = \|\cdot\|_2$.

\end{theorem}
\iffalse
\begin{proof}
This follows from the previous theorem and the bound $L \leq \frac{\|\bm{A}\|^2_{E \rightarrow H}}{\gamma}$, see  \cite{nesterov2005smooth}, and that Theorem \ref{Th:PD_rate} requires $L$-smoothness in $\|\cdot\|_2$.
\end{proof}
\fi

% \iffalse
% Of course, Algorithm~\ref{AAM-2} may also be applied to the dual problem, and the same result as in Theorem~\ref{Th:PD_rate} holds, although with a slightly different constant. The details are in the supplementary material.
% \fi

\section{Application to Non-convex Optimization} \label{S:als}
% \begin{wrapfigure}{r}{0.5\textwidth}
% \vspace{-0.5cm}
%\begin{minipage}{0.5\textwidth}
\begin{figure}[!ht]
% 	\vspace{-0.2cm}
	\centering
	\includegraphics[width=0.8\columnwidth]{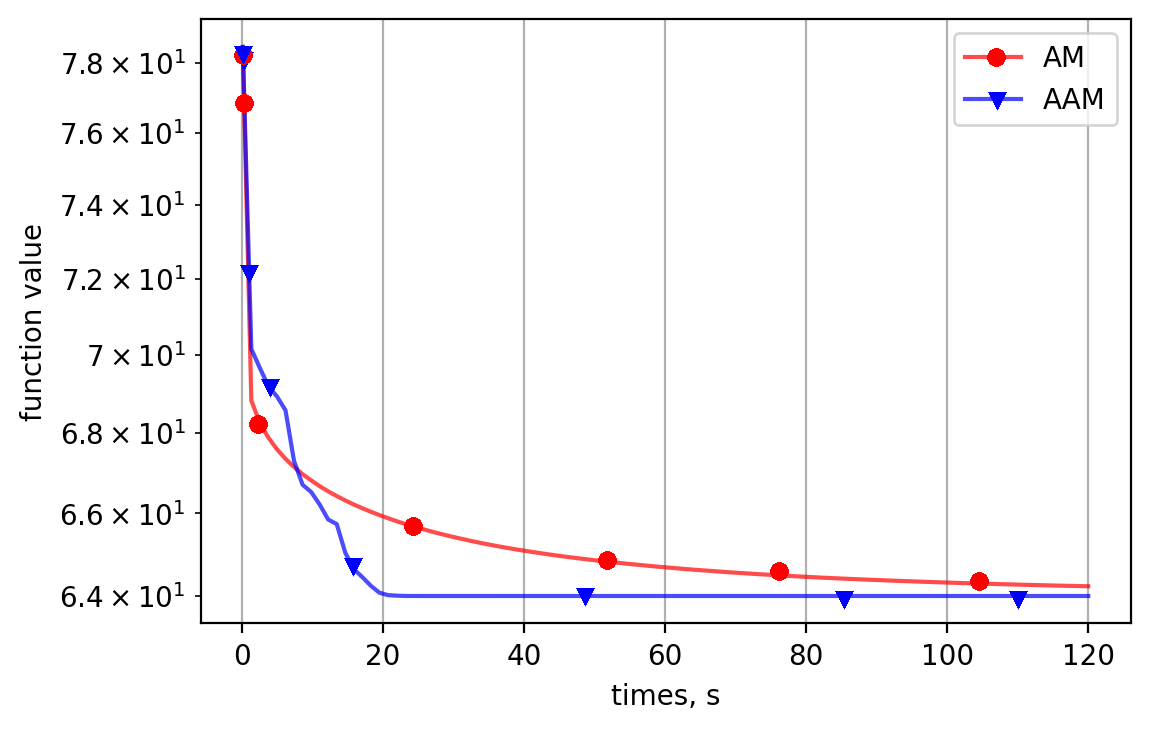}
	\vspace{-0.5cm}
	\caption{Performance of  AM and Algorithm~\ref{AAM-2} on the problem \eqref{eq:mcomp_problem}}
% 	\vspace{-0.5cm}
	\label{fig:als}
\end{figure}

%\end{minipage}
% \end{wrapfigure}

In this section we apply our general accelerated AM method to a non-convex collaborative filtering problem. The problem consists of completion of the user-item preferences matrix with estimated values based on a small number of observed ratings made by other users. This is a particular case of the matrix completion problem. The unknown ratings $\hat r_{ui}$ associated with the user $u$ and the item $i$ are sought as a product $x_u^\top y_i$, where the vectors $x_u$ and $y_i$ are the optimized variables. 
We assume that we are given $r_{ui}$ -- observed preference rates associated with some users and items. The confidence $c_{ui}$ for an observation $r_{ui}$ is defined as $c_{ui} = 1 + 5 r_{ui}$, and the binarized rating  $p_{ui}$ is defined as $p_{ui} = 1$ if $r_{ui} > 0$ and $p_{ui} = 0$ if $r_{ui} = 0$.
Following the approach in \cite{hu2008collaborative}, we minimize the data fitting term with a regularizer
% \begin{equation}%\label{eq:mcomp_problem}
% \begin{multline}
\begin{equation}
    \label{eq:mcomp_problem}
    \min\limits_{x, y}~~F(x, y) = \sum_{\text{observed}\,u, i} c_{ui} \left(r_{ui} - x_u^\top y_i\right)^2
    \\
    +  \lambda \left(\sum_{u} ||x_u||_2^2 + \sum_{i} ||y_i||_2^2\right).
\end{equation}

This function can be explicitly minimized over $x$ for fixed $y$ and vice-versa, which motivates the use of alternating minimization procedures.
% where $r_{ui}$ is the observable preference rate associated with users and items, $c_{ui}$ is confidence in observing $r_{ui}$, in our case expressed as $c_{ui} = 1 + 5 r_{ui}$, $p_{ui}$ is the binarized rating:
% $$
% p_{ui} = \left\{
%                 \begin{array}{ll}
%                   1 \quad r_{ui} > 0,\\
%                   0 \quad r_{ui} = 0,\\
%                 \end{array}
%               \right.
% $$
% and $\lambda \left(\sum_{u} ||x_u||_2^2 + \sum_{i} ||y_i||_2^2\right)$~--- regularization term preventing overfitting during the learning process.

% For described objective function we can get the explicit expression for the solutions of equations $\nabla_{x} f(x, y) = 0$ and $\nabla_{y} f(x, y) = 0$.
The considered objective function is not convex, but has Lipchitz continuous gradient (by Theorem 1 from \cite{khenissi2019modeling}), so the minimization via Algorithm~\ref{AAM-2} is possible. We use the standard AM algorithm as a baseline. We generate the matrix $\{r_{ui}\}_{u, i}$ from  \href{https://www.upf.edu/web/mtg/lastfm360k}{Last.fm dataset 360K} with ratings given by listeners to certain artists. There were 70  users and 100 artists observed, and the sparsity coefficient of the matrix was approximately $2\%$. The regularization coefficient was set to $\lambda = 0.1$ 
In Figure~\ref{fig:als} we compare the performance of AM and Algorithm~\ref{AAM-2} applied to the problem \eqref{eq:mcomp_problem}.
%The bands around the mean curve corrsepond to 1 standard deviation as measured by sampling 7 random matrices $\|r_{ij}\|$.

%\section{Accelerated Sinkhorn's algorithm}
\section{Application to Optimal Transport and Wasserstein Barycenter}
\label{S:ASA}
In this section we apply the developed methods to solve the  discrete-discrete optimal transportation problem
\begin{align}
    \label{OT}
    \min_{X\in \mathcal{U}(r,c)} f(X)=\langle C, X\rangle
    %+\gamma \mathcal{R}(X),
    \\
    \mathcal{U}(r,c)=\{X\in \mathbb{R}^{N\times N}_+: X\mathbf{1}=r, X^T\mathbf{1}=c\} \notag,
\end{align}
where $X$ is the transportation plan, $C\in\mathbb{R}^{N\times N}_+$ is a given cost matrix, $\mathbf{1} \in \mathbb{R}^N$ is the vector of all ones,  $r,c\in S_N(1):=\{s \in \mathbb{R}^N_+: \la s, \mathbf{1}\ra = 1\}$ are given discrete measures, and $\la A,B\ra$ denotes the Frobenius product of matrices defined as $\la A,B\ra=\sum\limits_{i,j=1}^N A_{ij}B_{ij}$. 

Optimal transport distances lead to the concept of Wasserstein barycenter (WB). Given two probability measures $p, q$ and a cost matrix $C \in \mathbb{R}_+^{N \times N}$ we define optimal
transportation distance between them as
\[W_{C}(p, q) = \min_{X\in\mathcal{U}(p,q)} \langle X, C\rangle.\]

For a given set of probability measures $p_i$ and cost matrices $C_i$ we define their weighted barycenter with weights $w \in S_m(1)$ as a solution of the following convex optimization problem:\[\min_{q \in S_N(1)} \sum_{i=1}^m w_iW_{C_i}(p_i
, q).\]

The key aspect to apply our method is the strong convexity of the function to minimize. To ensure this, we introduce a \textit{general} strongly convex regularizer $\mathcal{R}(X)$, e.g. entropy \cite{cuturi2013sinkhorn} or squared Euclidean norm \cite{essid2018quadratically}. Since the $f(X)$ is strongly convex, we are in the situation of Section \ref{S:primal-dual}. We underline that our method is able to solve OT problems with \textit{general} regularizers, but, next we focus on a special case of entropic regularization as the most used in practice. In this case $\mathcal{R}(X) = \la X, \ln X \ra$ with  $\ln X$ taken elementwise. The detailed derivations and proofs for this subsection can be found in the supplementary.

 Using the entropic regularization we define  the regularized OT-distance for $\gamma>0$: 
 \[
    W_{C,\gamma}(p, q) = \min_{\pi\in\mathcal{U}(p,q)} \langle \pi, C\rangle + \gamma \mathcal{R}(\pi),
 \]
and the regularized barycenter which is the solution to the following problem:\begin{equation}
\label{entropy_bar}
\min _{q \in \in S_N(1)} \sum_{l=1}^{m} w_{l}\mathcal{W}_{C_l,\gamma}\left(p_{l}, q\right).
\end{equation}

Importantly, the entropy $\la X, \ln X \ra$ is \textit{not} strongly convex on $\mathbb{R}^{N\times N}_+$. Thus, if we just take $Q=\mathbb{R}^{N\times N}_+$ in Section \ref{S:primal-dual}, we will get a standard dual problem \cite{altschuler2017near-linear}[Sect. 3.3] in the form of minimization of a sum of exponents. This objective \textit{does not} have Lipschitz-continuous gradient as the gradient grows exponentially. Previous works \cite{dvurechensky2018computational,lin2019efficient,lin2019efficiency} do not take this into account and apply accelerated gradient methods to the dual problem, which makes their complexity results not completely correct. 

To resolve this problem, we note that $\mathcal{U}(r,c) \subset Q:=\{X\in \mathbb{R}^{N\times N}_+: \mathbf{1}^TX\mathbf{1}=1\}$ and the entropy $\la X, \ln X \ra$ is strongly convex on this new set $Q$ in $1$-norm. Thus, we introduce an additional constraint $\mathbf{1}^TX\mathbf{1}=1$ into the problem. Since this constraint is a corollary of the constraint $X \in \mathcal{U}(r,c)$, the solution of the problem remains the same. The gain is that the gradient in the dual now becomes Lipschitz continuous and we can apply our primal-dual AAM.

Introducing the dual variables $y,z$, we derive in the supplementary the dual entropy OT problem
\small
\begin{equation}
    \label{OT_dual}
    \min_{y,z\in\mathbb{R}^N}  
    \gamma\ln\left(
    \sum_{i,j=1}^N \exp \left(\frac{- ({y^i+z^j+C^{i j}})}{\gamma} \right)
    \right)+\la y,r \ra + \la z,c \ra,
\end{equation}
\normalsize
 and the dual (minimization) problem of \eqref{entropy_bar}
 
%  \small
\begin{equation}
    \label{WB_dual}
       \min_{
       \substack{
        u,v\\
        \sum_{l=1}^m w_lv_l=0
       }
       }
       \gamma\sum\limits_{l=1}^{m} w_{l} \ln \sum_{i,j=1}^N \exp \frac{- ({u_l^i+v_l^j+C_l^{i j}})}{\gamma} 
       \\
       -\gamma\sum\limits_{l=1}^{m} w_{l} \left\langle u_{l}, p_{l}\right\rangle
\end{equation}
% \normalsize
The variables in the dual problem \eqref{OT_dual}, \eqref{WB_dual} naturally decompose into two blocks. Moreover, minimization over any one block can be made explicitly and the expressions are the same as for the Sinkhorn's algorithm in the form of \cite{altschuler2017near-linear} and IBP from \cite{kroshnin19a}. The detailed proof of this fact may be found in the corresponding section of the supplement.

\begin{figure}[!ht]
% \vspace{-0.4cm}
\centering
\includegraphics[width=0.8\columnwidth]{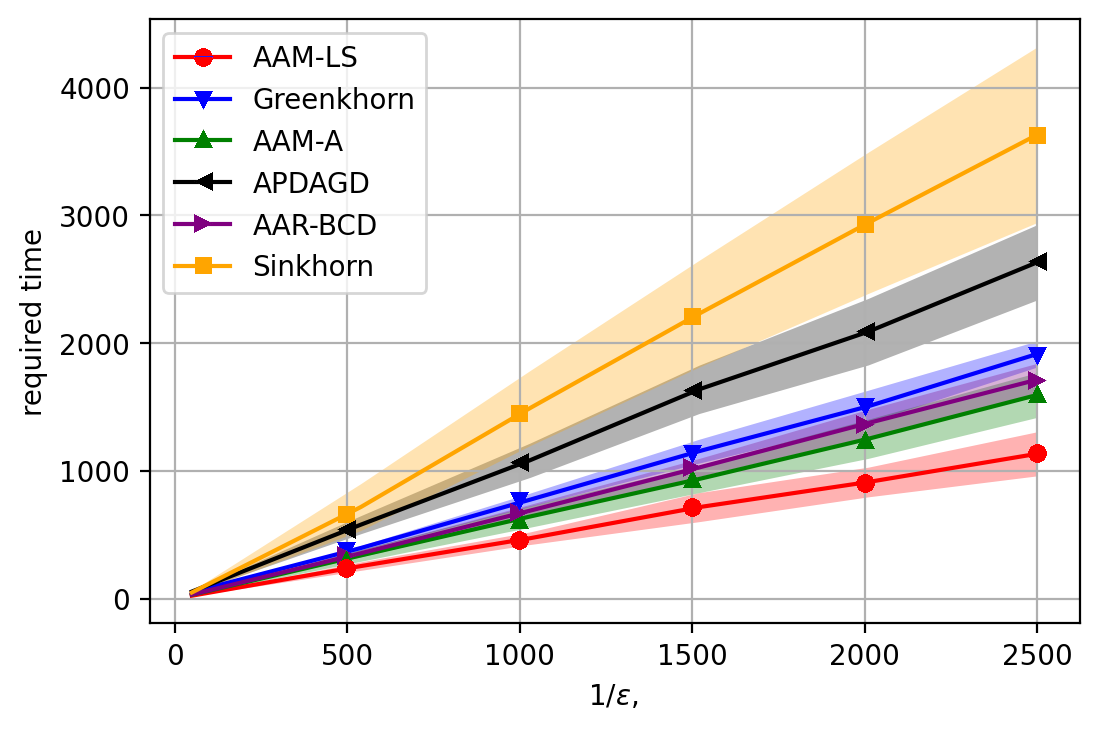}
% \vspace{-0.5cm}
\caption{Performance comparison on MNIST dataset. Filled in area corresponds to 1 standard deviation.}
\label{icml-historical}
% \vspace{-0.2cm}
\end{figure}

Concerning \textbf{OT problem}, the goal is to approximate the non-regularized OT distance, the regularization parameter has to be chosen small, which leads to instabilities for the matrix-scaling Sinkhorn's algorithm of \cite{cuturi2013sinkhorn}.

We obtained the final bound of the complexity to find an $\eps$-approximation for the non-regularized OT problem to be $O\left(\frac{N^{5/2}\sqrt{\ln N}\|C\|_\infty}{\varepsilon}\right)$. Compared to the same bound for the Sinkhorn's algorithm, which is $O\left(\frac{N^2\ln N\|C\|^2_\infty}{\varepsilon^2}\right)$, the new result for our accelerated algorithm is better in terms of $\eps$.  Detailed derivations can be found in the supplementary.
% \FloatBarrier

In Figure~\ref{icml-historical}, we provide a numerical comparison of our methods with Sinkhorn's algorithm, the AAR-BCD method \cite{diakonikolas2018alternating}, the APDA(G/M)D method \cite{dvurechensky2018computational,lin2019efficient} and with the Greenkhorn algorithm \cite{altschuler2017near-linear}. We do not provide numerical comparison with Area Convexity algorithm from \cite{jambulapati2019direct} because the authors \textbf{did not} implement their algorithm. Instead of this the authors "implemented their algorithm as an instance
of mirror prox". For this instance "there is not a known proof of  convergence with an area-convex regularizer". So it's impossible to know the moment of time when the desired accuracy is reached. The AAM-LS method is the Accelerated Sinkhorn algorithm based on Algorithm \ref{PDAAM-2}, while the AAM-A is the Accelerated Sinkhorn algorithm based on the APDAGD method.  Pseudocode of both these methods may be found in the supplementary. 
%The AAR-BCD algorithm has not been shown to possess primal-dual properties in \cite{diakonikolas2018alternating}, but it seems that it does, at least for the case of two blocks, in which the method is deterministic. 
We performed experiments using randomly chosen images from MNIST dataset. We slightly modified the smaller values in the measures corresponding to the images as in \cite{dvurechensky2018computational}. We choose several values of accuracy $\eps \in [0.0004, 0.002]$, sampled 5 pairs of images and ran the methods until the desired accuracy was reached, which is ensured using computable stopping criteria \cite{dvurechensky2018computational}. Our AAM algorithms  outperform the other methods and also have much lower variance in performance compared to the Sinkhorn's algorithm. Probably the large variance in the results for Sinkhorn's algorithm is caused by its instability for small $\gamma$, which corresponds to small $\eps$.

\FloatBarrier

For \textbf{WB problem}, we add to the comparison recently presented algorithm from \cite{dvinskikh2021improved}. All presented algorithms have convergence guarantees on the value of non-regularized primal function, e.g. they guarantee that $\sum_{l = 1}^m w_l  W(p_l, \bar{q}^t) - \sum_{l = 1}^m w_l W(p_l, q^*) \leq \varepsilon$ after $t$ number of iterations (see Table~\ref{T:IBP}), where $\bar{q}^t \:= \sum_{l = 1}^m w_l q^t_l$ and $q^t_l = (X^t)^T \one$, $X^t$ is an approximation of a tansportation plan at iteration $t$. But the particular implementation of Area Convexity algorithm from  \cite{dvinskikh2021improved} is supposed to work faster than theoretical analysis allows, because alternating minimization procedure for calculation of a prox-mapping has different stopping criterion, which is more easy to satisfy.  To compare actual convergence, we took on 5 randomly chosen images from MNIST dataset and plotted in Figure~\ref{pfme} and Figure~\ref{pfms} the rate of decay of primal function from a transportation plan, which is projected on the feasible set with  Algorithm~2 from \cite{altschuler2017near-linear}. 
We divided visualisation into two figures because of the scaling issues: Area-Convexity and Mirror-Prox were much slower than the others. IBP appears twice for a reference.
%We used the time that IBP requires to converge as a reference time for other algorithms. 
Parameter of entropic regularization $\gamma=5e-4$.

Figure~\ref{ab-ibp} and Figure~\ref{ab-oth} illustrate the results obtained after 500s by the proposed algorithms.

\begin{figure}[H]
% \vspace{-0.2cm}
\centering
\includegraphics[width=0.5\columnwidth]{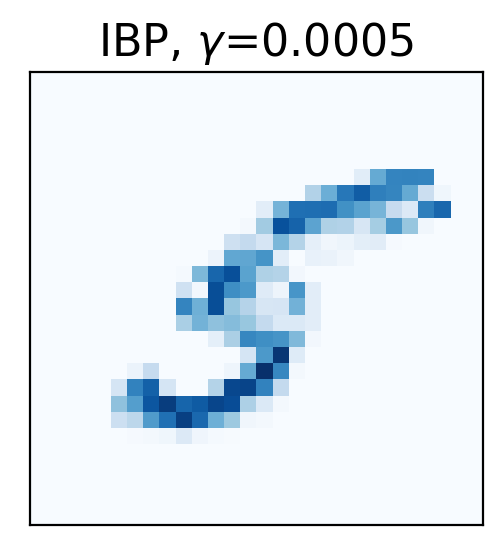}
% \includegraphics[width=\columnwidth]{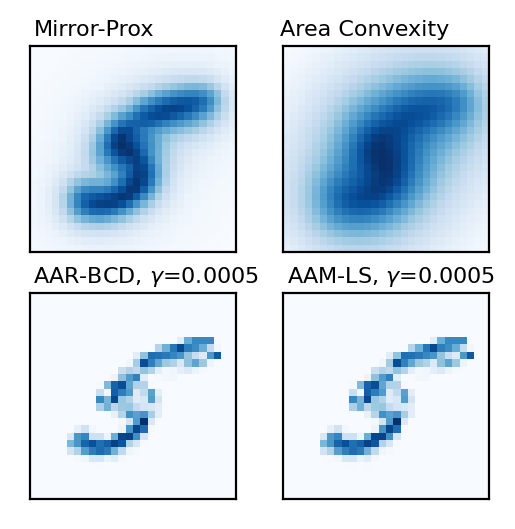}
% \vspace{-0.4cm}
\caption{Approximate barycenter}
\label{ab-ibp}
% \vspace{-0.4cm}
\end{figure}

\begin{figure}[H]
% \vspace{-0.2cm}
\centering
\includegraphics[width=\columnwidth]{MNISTbar.png}
% \vspace{-1.3cm}
\caption{Approximate barycenter}
\label{ab-oth}
% \vspace{-0.2cm}
\end{figure}

\begin{figure}[H]
% \vspace{-0.2cm}
\centering
\includegraphics[width=0.8\columnwidth]{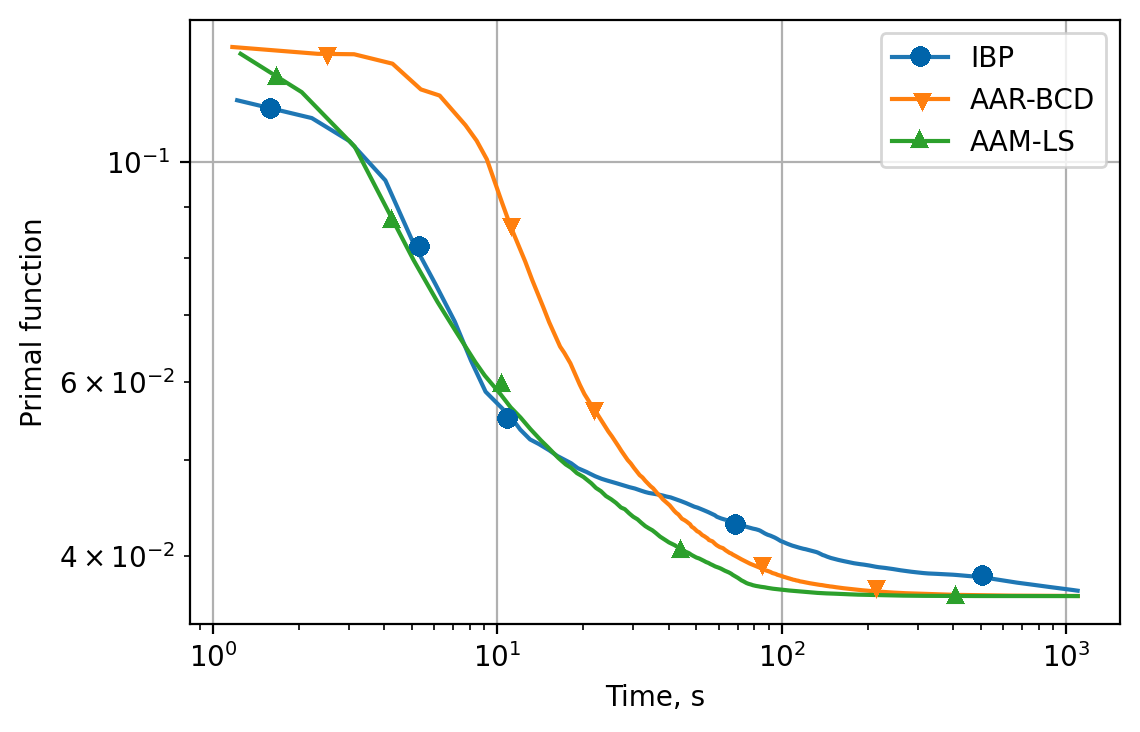}
% \vspace{-0.5cm}
\caption{Performance comparison on MNIST dataset.}
\label{pfme}
% \vspace{-0.2cm}
\end{figure}

\begin{figure}[H]
% \vspace{-0.2cm}
\centering
\includegraphics[width=0.8\columnwidth]{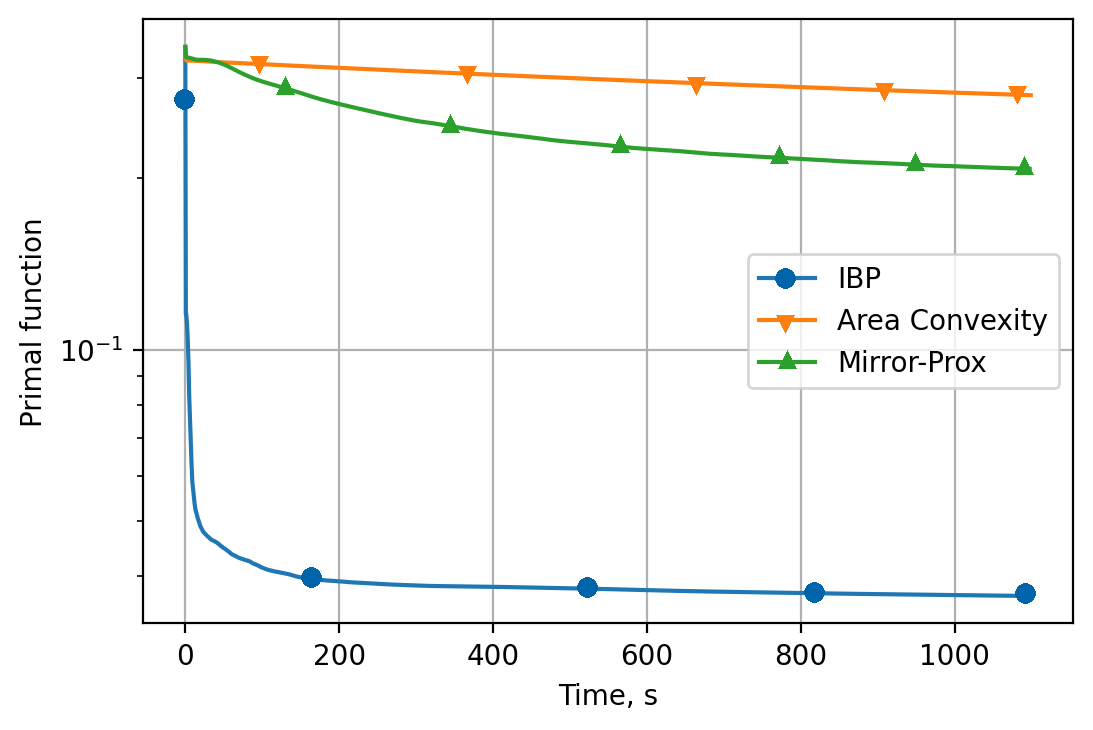}
% \vspace{-0.4cm}
\caption{Performance comparison on MNIST dataset.}
\label{pfms}
% \vspace{-0.2cm}
\end{figure}

We also compare the performance of algorithms in terms of $\sum_{l = 1}^m w_l \|q_l^t - \bar{q}^t\|_1$ which is used as stopping criterion for IBP algorithm, in Figure~\ref{oam}.

\begin{figure}[H]
% \vspace{-0.2cm}
\centering
\includegraphics[width=0.8\columnwidth]{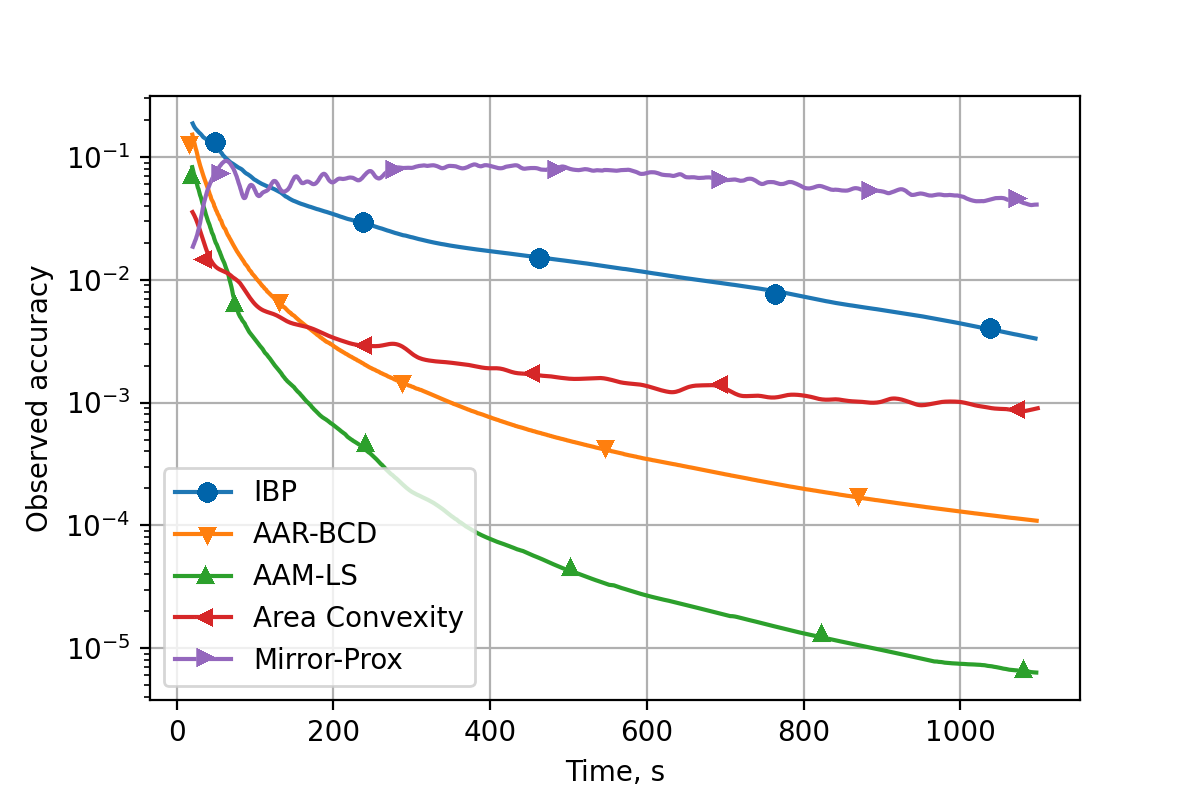}
% \vspace{-0.4cm}
\caption{Performance comparison on MNIST dataset.}
\label{oam}
\vspace{-0.2cm}
\end{figure}

% \todo{some text}

% \begin{figure}
% % \vspace{-0.4cm}
% \centering
% \includegraphics[width=\columnwidth]{MNISTbar.png}
% \vspace{-0.9cm}
% \caption{Approximate barycenter}
% \label{ab-oth}
% \vspace{-0.4cm}
% \end{figure}

One may be interested in convergence to a true barycenter. To show the convergence we conducted experiments with random Gaussian measures. For this setup one has analytic expression for a Wasserstein barycenter. 

In Figure~\ref{rag} we compare the performance of algorithms in terms of $\|\bar{q}^t - {q}^*\|_1$,  where ${q}^*$ is a true barycenter. Parameter of entropic regularization $\gamma=5e-5$.

\begin{figure}[H]
% \vspace{-0.2cm}
\centering
\includegraphics[width=0.83\columnwidth]{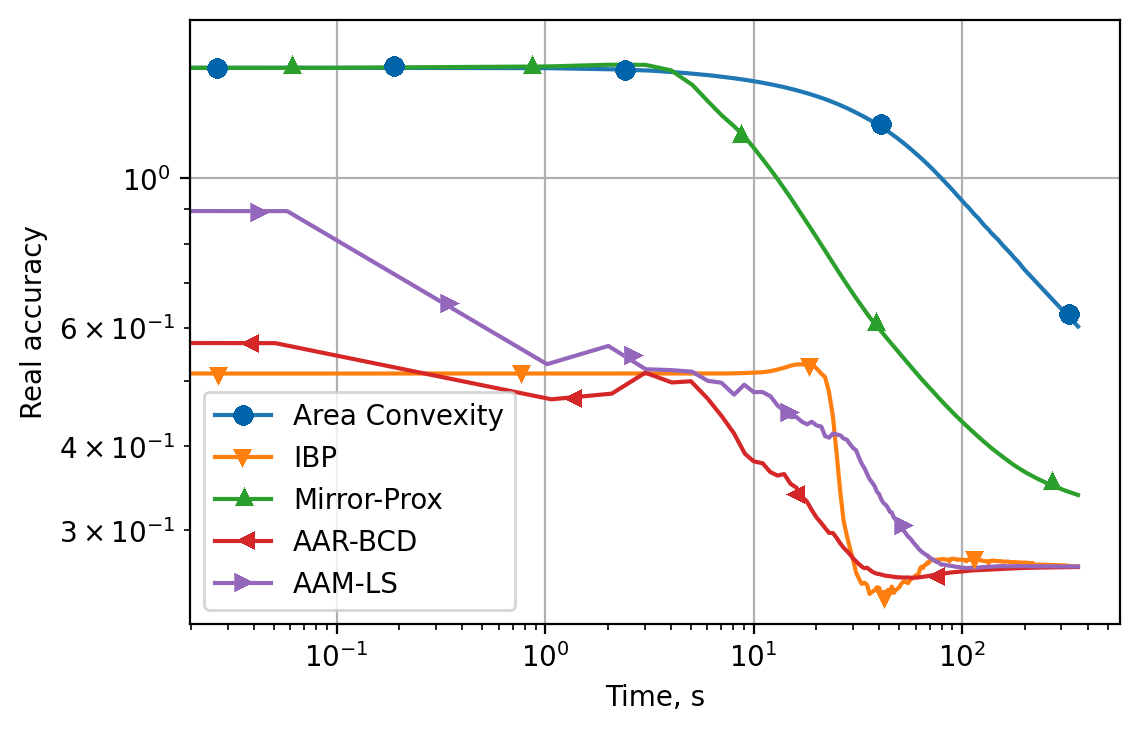}
% \vspace{-0.4cm}
\caption{Performance comparison on Gaussian measures.}
\label{rag}
% \vspace{-0.2cm}
\end{figure}

% \FloatBarrier

In Figure~\ref{oag} we compare the performance of algorithms in terms of $\sum_{l = 1}^m w_l \|q_l^t - \bar{q}^t\|_1$ in order to show a relation between Real accuracy and Observed accuracy.
\begin{figure}[H]
% \vspace{-0.2cm}
\centering
\includegraphics[width=0.8\columnwidth]{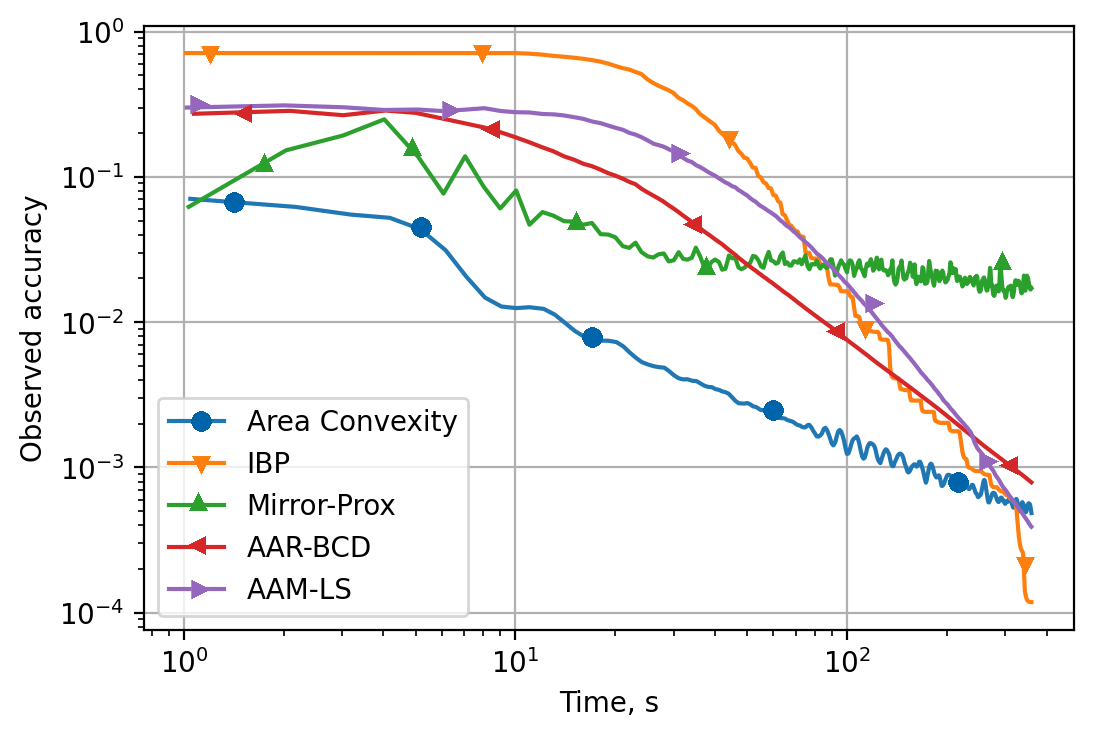}
% \vspace{-0.4cm}
\caption{Performance comparison on Gaussian measures.}
\label{oag}
% \vspace{-0.2cm}
\end{figure}

\section{Application to Least Squares}
\label{S:LS}
We also illustrate the results by solving the alternating least squares problem on the Blog Feedback Data Set \cite{buza2014feedback} obtained from \href{http://archive.ics.uci.edu/ml}{UCI Machine Learning Repository}. The data set contains 280 attributes and 52,396 data points. The attributes correspond to various metrics of crawled blog posts. The data is labeled, and the labels correspond to the number of comments that were posted within 24 hours from a fixed basetime. The goal of a regression method is to predict the number of comments that a blog post receives. 

We partition the data into $n$ blocks of the same size sequentially, e.g. we group the first $N/n$ coordinates into the first block, the second $N/n$
coordinates into the second block, and so on. We present comparison with 
block sizes $N/n$ are 5 and 20, corresponding to $n = 56$ and $n = 14$.

The comparison for the linear regression is presented in Figure~\ref{linreg-1} and in Figure~\ref{linreg-2}.

\begin{figure}[H]%{r}{\textwidth}
% 	\vspace{-0.4cm}
	\centering
	\includegraphics[width=0.8\columnwidth]{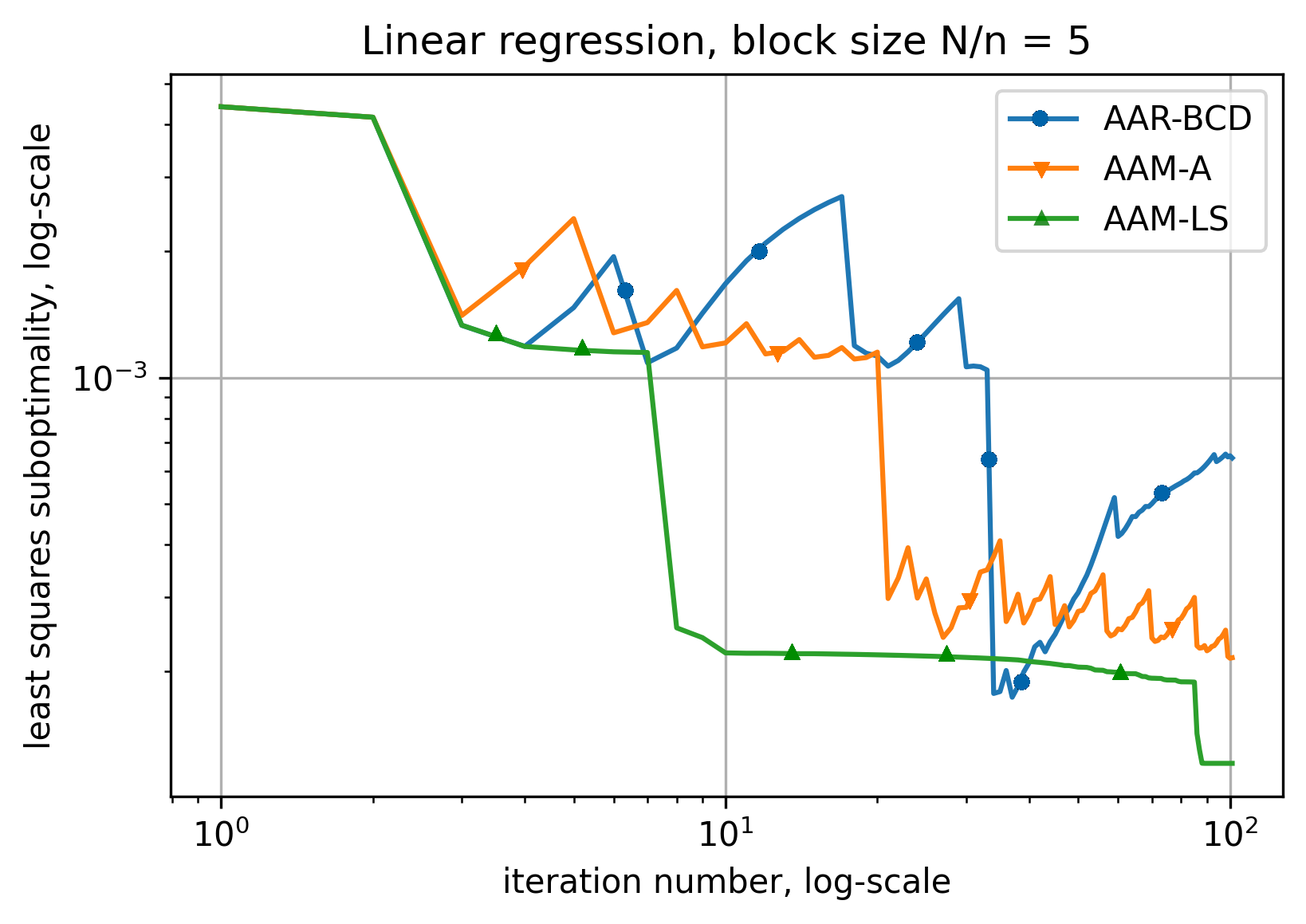}
% 	\vspace{-0.5cm}
	\caption{Performance comparison for the linear regression}
	\label{linreg-1}
	
	\includegraphics[width=0.8\columnwidth]{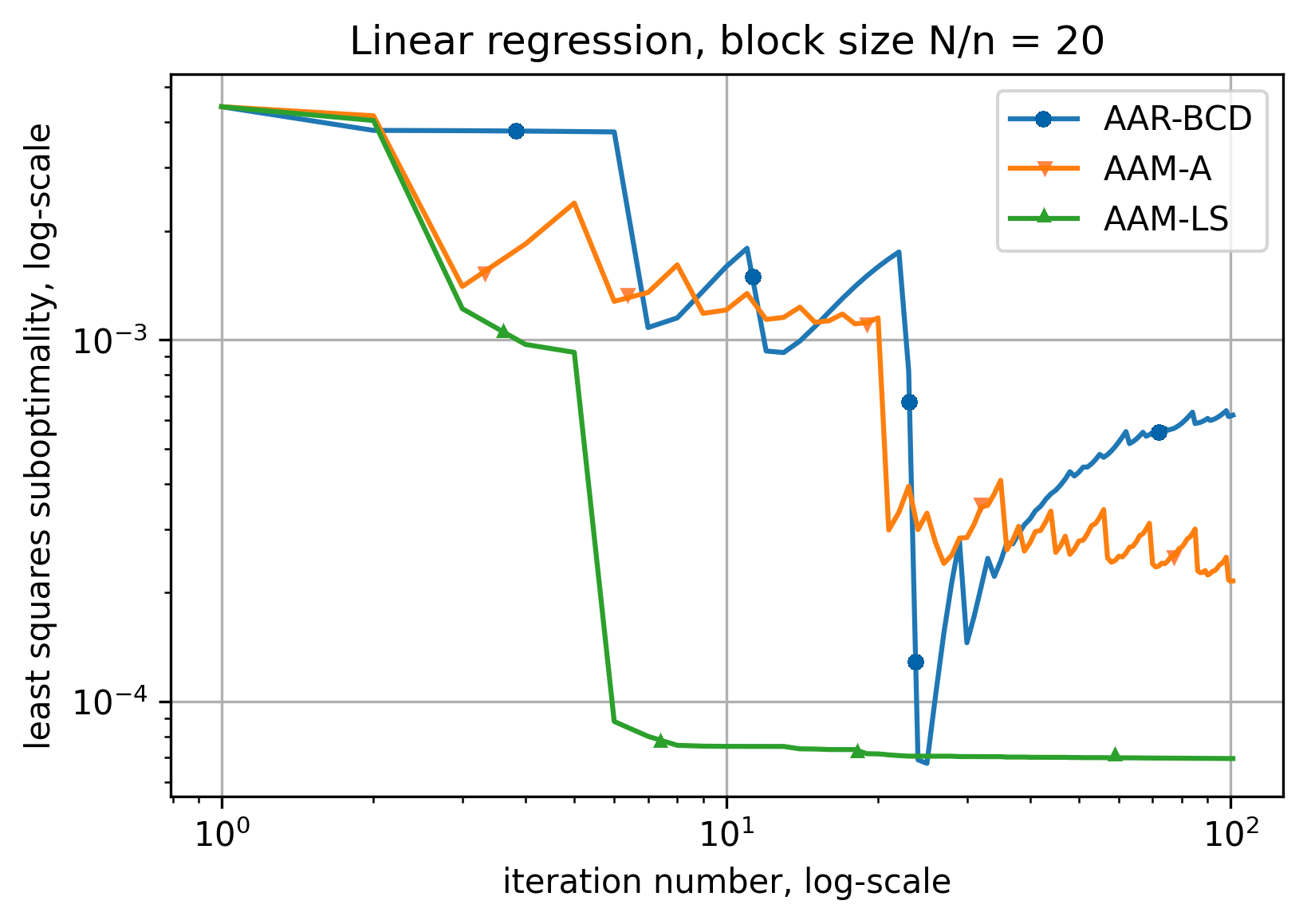}
% 	\vspace{-0.4cm}
	\caption{Performance comparison for the linear regression}
	\label{linreg-2}
% 	\vspace{-0.2cm}
\end{figure}

\section{Conclusions}
In this paper we propose an accelerated alternating minimization algorithm that combines greedy block-wise updates with full relaxation and Nesterov's moment. The method automatically adapts to the gradient Lipschitz constant and convexity of the problem. It achieves in the convex case  $O(n/k^2)$ convergence rate for the objective and in the non-convex case $O(n/k)$ convergence rate for the squared norm of the gradient. We also propose a primal-dual extension of this algorithm for minimizing strongly convex functions under linear constraints. The practical efficiency of the algorithm is demonstrated by a series of numerical experiments.

\section*{Acknowledgements}
We are grateful to the anonymous referees for their helpful comments and suggestions. We are also grateful to Jelena Diakonikolas for discussions related to this work.
This research was funded by Russian Science Foundation (project 18-71-10108).

% \section*{Broader Impact}
% This work does not present any foreseeable societal consequence.

\nocite{nesterov2020primal-dual,guminov2019accelerated,chernov2016fast,dvurechensky2016primal-dual,anikin2017dual,dvinskikh2019primal,dvurechensky2020stable}
%ivanova2020composite

\bibliography{references}
\bibliographystyle{icml2021}

% \titleformat{\section}{\normalfont\bfseries\normalsize}{Appendix \thesection: }{1em}{}
\part*{Supplementary materials}

\section{Omitted proofs in Section~\ref{S:AAM}: Accelerated Alternating Minimization}
% \iffalse
% \begin{theorem}
% \label{Th:AAM2-main}
% a) Assume that $f$ is (possibly non-convex) $L$-smooth function w.r.t. $\|\cdot\|_2$. Then, after $k$ steps of Algorithm \ref{AAM-2},
% $$
% \min_{i=0,...,k}\| \nabla f(y^i)\|_2^2 \leqslant \frac{2nL(f(x^0) - f(x^*))}{k}.
% $$
% b) Assume additionally that $f$ is convex. Then, after $k$ steps of Algorithm \ref{AAM-2},
% \begin{multline*}
%     f(x^{k})-f(x^*) \leqslant \frac{2nL\|x^0-x^*\|_2^2}{k^2}.
% \end{multline*}
% \end{theorem}

% \begin{proof}[Proof of Theorem~\ref{Th:AAM2-main}, a)]
% We have that \begin{multline*}
%     f(x^{k+1})\leqslant f(y^k)-\frac{1}{2nL}\|\nabla f(y^k)\|_2^2
%     \leqslant f(x^k)-\frac{1}{2nL}\|\nabla f(y^k)\|_2^2. \label{grad_descent_guarantee}
% \end{multline*}
% Summing this up for $i=0,\ldots, k$, we obtain 
% \begin{multline*}
%     f(x^0)-f(x^*)\geqslant f(x^{0})-f(x^{N+1})\geqslant \frac{k}{2nL}\min_{i=0,\ldots, k}\|\nabla f(y^i)\|^2_2.
% \end{multline*}
% Consequently, we may guarantee \[\min_{i=0,\ldots, k}\|\nabla f(y^i)\|_2^2\leqslant \frac{2nL(f(x^0)-f(x^*))}{k}.\]
% \end{proof}

% We start the proof of part b) with a key lemma.

% \begin{lemma} 
% \label{AAM-2_Ak_rate}
% After $k$ steps of Algorithm \ref{AAM-2} it holds that
% \begin{equation}
%     \label{eq:main_recurrence}
%     A_{k}f(x^{k}) \leqslant \min_{x \in \mathbb{R}^N} \psi_{k}(x) = \psi_{k}(v^{k}).
% \end{equation}
% Moreover, %if the objective is $L$-smooth 
% $A_k \geqslant\frac{k^2}{4Ln}$, where $n$ is the number of blocks.
% \end{lemma}

\begin{proof}[Proof of Lemma~\ref{AAM-2_Ak_rate}]
Let us introduce an auxiliary  sequence of functions defined as \begin{equation*}
    \psi_0(x)=\frac{1}{2}\|x-x^0\|^2,\quad
    \psi_{k+1}(x) = \psi_{k}(x) + a_{k+1}\{f(y^k) + \langle \nabla f(y^k), x - y^k \rangle\}.
\end{equation*}

It is easy to see that $v^{k}=\argmin_{x\in\mathbb{R}^N} \psi_{k}(x).$

Now, we prove inequality \eqref{eq:main_recurrence} by induction over $k$. For $k=0$, the inequality holds. Assume that 
\[A_{k}f(x^{k}) \leqslant \min_{x \in \mathbb{R}^N} \psi_{k}(x) = \psi_{k}(v^{k}).\]
Then 
\begin{multline*}
    \psi_{k+1}(v^{k+1}) =\min_{x \in \mathbb{R}^N} \left\{ \psi_{k}(x) + a_{k+1}\{f(y^k) + \langle \nabla f(y^k), x - y^k \rangle\} \right\}\geqslant
    \\
    \geqslant \min_{x \in \mathbb{R}^N} \bigg\{ \psi_{k}(v^k) +\frac{1}{2}\|x-v^k\|_2^2+ a_{k+1}\{f(y^k) + \langle \nabla f(y^k), x - y^k \rangle\} \bigg\} \geqslant
    \\
    \geqslant \psi_{k}(v^k)+a_{k+1}f(y^k) -\frac{a_{k+1}^2}{2}\|\nabla f(y^k)\|_2^2 + a_{k+1}\langle \nabla f(y^k), v^k - y^k\rangle\geqslant
    \\
    \geqslant A_kf(x^k) +a_{k+1}f(y^k)-\frac{a_{k+1}^2}{2}\|\nabla f(y^k)\|_2^2+ a_{k+1}\langle \nabla f(y^k), v^k - y^k\rangle 
    \\
    \geqslant A_{k+1} f(y^k) - \frac{a_{k+1}^2}{2}\|\nabla f(y^k)\|_2^2+ a_{k+1}\langle \nabla f(y^k), v^k - y^k\rangle.
\end{multline*}
Here we used that $\psi_{k}$ is a strongly convex function with minimum at $v^k$ and that $f(y^k)\leqslant f(x^k)$. By the optimality conditions for the problem $\min\limits_{\beta\in [0,1]} f\left(x^k + \beta (v^k - x^k)\right)$, there are three possibilities
\begin{enumerate}
    \item $\beta_k = 1$, $\langle \nabla f(y^k),x^k - v^k \rangle \geqslant 0$, $y^k = v^k$;
    \item $\beta_k \in (0,1)$ and $\langle \nabla f(y^k),x^k - v^k \rangle = 0$, $y^k = v^k + \beta_k (x^k - v^k)$;
    \item $\beta_k = 0$ and $\langle \nabla f(y^k),x^k - v^k \rangle \leqslant 0$, $y^k = x^k$ .
\end{enumerate}In all three cases,  $\langle \nabla f(y^k), v^k - y^k \rangle \geqslant 0$.

Using the rule for choosing $a_{k+1}$ in the method, we finish the proof of the induction step: $$\psi_{k+1}(v^{k+1})\geqslant A_{k+1}f(x^{k+1}).$$ It remains to show that the equation
\begin{equation}
    \label{eq:Th:Main_proof_1}
    f(y^k) - \frac{a_{k+1}^2}{2A_{k+1}} \|\nabla f(y^k) \|_2^2 = f(x^{k+1}).
\end{equation}
has a solution $a_{k+1} > 0$.
By the $L$-smoothness of the objective, we have, for all $i \geq 0$,
\[f(y^k)-\frac{1}{2L}\|\nabla_i f(y^k)\|_2^2\geqslant f(x_i^{k+1}),\] where $x_i^{k+1}=\argmin_{x\in S_{i}} f(x)$. Since $A_{k+1}=A_k + a_{k+1}$, we can rewrite \eqref{eq:Th:Main_proof_1} as 
\begin{equation*}
    \frac{a_{k+1}^2}{2} \|\nabla f(y^k) \|_2^2  + a_{k+1} (f(x^{k+1}) - f(y^k))+  A_k(f(x^{k+1}) - f(y^k)) = 0.
\end{equation*}
    
Since $f(x^{k+1}) - f(y^k) < 0$ (otherwise $\|\nabla f(y^k)\| = 0$ and $y_k$ is a solution to the problem), there exists solution $ a_{k+1}>0$.

Let us estimate the rate of the growth for $A_k$.  Since $i_k=\argmax_{i} \|\nabla_i f(y^k)\|_2^2$, 
\[\|\nabla_{i_k} f(y^k)\|_2^2\geqslant \frac{1}{n}\|\nabla f(y^k)\|_2^2.\]
As a consequence, we have
\begin{equation*}
    f(y^k)-\frac{1}{2Ln}\|\nabla f(y^k)\|_2^2\geqslant
     f(y^k)-\frac{1}{2L}\|\nabla_{i_k} f(y^k)\|_2^2\geqslant f(x^{k+1}).
\end{equation*}
This in combination with our rule for choosing $a_{k+1}$ implies $\frac{a_{k+1}^2}{2A_{k+1}}\geqslant \frac{1}{2Ln}$. Since $A_1 = a_1 \geqslant \frac{1}{Ln}$, we prove by induction that $a_k \geqslant \frac{k}{2Ln}$ and $A_{k} \geqslant \frac{(k+1)^2}{4nL} \geqslant \frac{k^2}{4nL}.$ Indeed,
\begin{equation}
	a_{k+1}  \geqslant \frac{1 + \sqrt{1 + 4A_k Ln}}{2Ln} = \frac{1}{2Ln} + \sqrt{\frac{1}{4L^2n^2} + \frac{A_{k}}{Ln}} \notag
	\geqslant 
	\frac{1}{2Ln} + \sqrt{\frac{A_{k}}{Ln}} \notag 
	\geqslant 
	\frac{1}{2Ln} + \frac{1}{\sqrt{L}}\frac{k+1}{2\sqrt{Ln}} =
	\frac{k+2}{2Ln}. \notag
\end{equation}

Hence,
\begin{equation*}
  A_{k+1} = A_k + a_{k+1} \geqslant \frac{(k+1)^2}{4Ln} + \frac{k+2}{2Ln} \geqslant \frac{(k+2)^2}{4Ln}.  
\end{equation*}

\end{proof}
\section{Omitted proofs in Section~\ref{S:primal-dual}: Primal-Dual Extension}
% In this section we consider the primal-dual (up to a sign) pair of minimization problems 
% \[
% (P_1) \quad \quad \min_{x\in Q \subseteq E} \left\{ f(x) : \bm{A}x =b \right\}, (P_2) \quad \min_{\lambda \in \Lambda} \left\{   \phi(\lambda)=\la \lambda, b \ra  + \max_{x\in Q} \left( -f(x) - \la \bm{A}^T \lambda  ,x \ra \right) \right\},
% \] where $E$ is a finite-dimensional real \vectr space, $Q$ is a simple closed convex set, $f$ is convex function, $\bm{A}$ is a given linear operator from $E$ to some finite-dimensional real \vectr space $H$, $b \in H$ is given, $\Lambda=H^*$ is the conjugate space.

% For the convenience we state Theorem 3 from the main text as.
% \begin{theorem}
% \label{Th:PD-Bounds-fixed}
% Let the objective $f(x)$ in the problem $(P_1)$ be $\gamma$-strongly convex w.r.t. $\|\cdot\|_E$, and let $\|\lambda^*\| \leqslant R$. Then, for the sequences $\hat{x}^{k},\eta^{k}$, $k\geqslant 0$, generated by Algorithm \ref{PDAAM-2}, 
% \begin{equation}
%     \|\bm{A} \hat{x}^k - b \|_2 \leqslant \frac{8n\|\bm{A}\|^2_{E\to H}R}{\gamma k^2},
%     |\phi(\eta^k) + f(\hat{x}^k)| \leqslant\frac{8n\|\bm{A}\|^2_{E\to H}R^2}{\gamma k^2}, 
%     \|\hat{x}^k-x^*\|_E \leqslant \frac{4\sqrt{2n}\|\bm{A}\|_{E\to H}R}{\gamma k}
%     \label{eq:APDAAM_bound_supp}
% \end{equation}
% where $\|\bm{A}\|_{E\to H}$ is the norm of $\bm{A}$ as a linear operator from $E$ to $H$, i.e. $\|\bm{A}\|_{E \rightarrow H} = \max_{u,v} \left\{{\la Au, v\ra : \|u\|_E = 1, \|v\|_H = 1}\right\}$, and $\|\cdot\|_H = \|\cdot\|_2$.

% \end{theorem}

To prove  Theorem~\ref{PD-bounds}, we first prove a slightly more general result.

\begin{theorem}
\label{Th:PD_rate}
Let the objective $\phi$ in the problem $(P_2)$ be $L$-smooth w.r.t. $\|\cdot\|_2$ and the solution of this problem be bounded, i.e. $\|\lambda^*\|_2 \leqslant R$. Then, for the sequences $\hat{x}_{k+1},\eta_{k+1}$, $k\geqslant 0$, generated by Algorithm \ref{PDAAM-2}, 
\begin{equation*}
    \hspace{-1em}\|\bm{A} \hat{x}^k - b \|_2 \leqslant \frac{8nLR}{k^2},\  |\phi(\eta^k) + f(\hat{x}^k)| \leqslant\frac{8nLR^2}{k^2},\ \|\hat{x}^k-x^*\|_E \leqslant \frac{4}{k}\sqrt{\frac{2nLR^2}{\gamma}}.
%\label{eq:untileq}
\end{equation*}
\end{theorem}
%\textbf{TODO:} fix the theorem statement.
%\pd{In the main text I included also the bound for $\|x^*-\hat{x}^k\|$. The proof should be added, see \url{https://arxiv.org/abs/1802.04367}.}
\begin{proof}
\iffalse\todo{Check that the proof is correct, especially the constants in the rate. Old text: Of course, the AAM-2 method may also be applied to the dual problem, and the same result as in Theorem~\ref{Th:PD_rate} holds, with a slightly different multiplicative constant du to the fact that for this method $A_k\geqslant \frac{k^2}{4Ln}$.}\sg{Done}\fi
%The proof mostly follows the steps of our previous work \cite{chernov2016fast}, but we give the proof for the reader's convenience.
% Denote $A_k=a_k^2L_{k}$ and note that $A_{k+1}=A_k+a_{k+1}$. From the proof of Lemma~\ref{AAM-1_lemma_2} we have for all $\lambda\in H$

% \begin{multline*}
%     a_{j+1}\la\nabla \varphi(\lambda^j),\lambda^j-\lambda\ra
%     \\
%     \leqslant A_j\varphi(\eta^j)-A_{j+1}\varphi(\eta^{j+1})+a_{j+1}\varphi(\lambda^j)
%     \\
%     +\frac{1}{2}\|\zeta^{j}-\lambda\|^2-\frac{1}{2}\|\zeta^{j+1}-\lambda\|^2.
% \end{multline*}

% We take a sum of these inequalities for $j=0,\ldots, k-1$ and rearrange the terms:

% \begin{multline*}
%     A_k\varphi(\eta^k)\leqslant  \sum_{j=0}^{k-1}{a_{j+1} \left( \phi(\lambda^j) + \langle \nabla \phi(\lambda^j), \lambda-\lambda^j \rangle \right) }
%     \\
%     + \frac{1}{2} \|\zeta^0-\lambda\|_2^2-\frac{1}{2} \|\zeta^k-\lambda\|_2^2.
% \end{multline*}

% If we drop the last negative term and notice that this inequality holds for all $\lambda\in H$, we arrive at
Applying Lemma~\ref{AAM-2_Ak_rate} to problem $(P_2)$, we obtain 
\begin{equation}
    A_k \phi(\eta^k) \leqslant \min_{\lambda \in \Lambda} \left\{\sum_{j=0}^{k-1}\{a_{j+1} ( \phi(\lambda^j)\right.+\left. \langle \nabla \phi(\lambda^j), \lambda-\lambda^j \rangle) 
    + \frac{1}{2} \|\lambda\|_2^2 \vphantom{\sum_{j=0}^{k-1}}\right\},\label{eq:FGM_compl}
\end{equation}

Let us introduce the set $\Lambda_R =\{\lambda:  \|\lambda\|_2 \leqslant 2R \}$ where $R$ is such that $\|\lambda^{*}\|_{2} \leqslant R$.
%given in \eqref{eq:l_bound}. 
Then, from \eqref{eq:FGM_compl}, we obtain for $h(\lambda) = \sum_{j=0}^{k-1}{a_{j+1} \left( \phi(\lambda^j) + \langle \nabla \phi(\lambda^j), \lambda-\lambda^j \rangle \right) } + \frac{1}{2} \|\lambda\|_2^2$
\begin{equation}
    A_k \phi(\eta^k) \leqslant \min_{\lambda \in \Lambda}  h(\lambda) \leqslant \min_{\lambda \in \Lambda_R}   h(\lambda)  \leqslant 
    2R^2+\min_{\lambda \in \Lambda_R} \left\{  \sum_{j=0}^{k-1}{a_{j+1}(\phi(\lambda^j) + \langle \nabla \phi(\lambda^j), \lambda-\lambda^j \rangle } \right\}.
    \label{eq:proof_st_1}
\end{equation}
On the other hand, from the definition %\eqref{eq:vp_def} 
$(P_2)$ of $\phi(\lambda)$, we have
\begin{equation*}
    \phi(\lambda^i)   = \langle \lambda^i, b \rangle + \max_{x\in Q} \left( -f(x) - \langle \bm{A}^T \lambda^i ,x \rangle \right) \notag 
    = \langle \lambda^i, b \rangle  - f(x(\lambda^i)) - \langle \bm{A}^T \lambda^i, x(\lambda^i) \rangle . \notag
\end{equation*}
Combining this equality with \eqref{eq:nvp}, we obtain
\begin{equation*}
    \phi(\lambda^i) - \langle \nabla \phi (\lambda^i), \lambda^i \rangle  
    =
    \langle \lambda^i, b \rangle - f(x(\lambda^i))
    - \langle \bm{A}^T \lambda^i, x(\lambda^i) \rangle - \langle b-\bm{A} x(\lambda^i),\lambda^i \rangle  = - f(x(\lambda^i)).
\end{equation*}
Summing these equalities from $i=0$ to $i=k-1$ with the weights $\{a_{i+1}\}_{i=0,...k-1}$, we get, using the convexity of $f$
\begin{multline*}
\sum_{i=0}^{k-1}a_{i+1}( \phi(\lambda^i) + \langle \nabla \phi(\lambda^i), \lambda-\lambda^i \rangle ) 
    \\
    =-\sum_{i=0}^{k-1} a_{i+1} f(x(\lambda^i)) + \sum_{i=0}^{k-1} a_{i+1} \langle (b-\bm{A} x(\lambda^i), \lambda \rangle 
    \leqslant -A_k f(\hat{x}^k) + A_{k} \langle b-\bm{A} \hat{x}^{k}, \lambda \rangle .
\end{multline*}
Substituting this inequality into \eqref{eq:proof_st_1}, we obtain
\begin{equation*}
    A_k \phi(\eta^k)  \leqslant -A_kf(\hat{x}^k)
    + \min_{\lambda \in \Lambda_R} \left\{  A_k\langle b-\bm{A} \hat{x}^k, \lambda\rangle \right\} + 2R^2 \notag
\end{equation*}
Finally, since 
$\max\limits_{\lambda \in \Lambda_R} \left\{  \langle -b+\bm{A} \hat{x}^k, \lambda \rangle \right\} = 2 R \|\bm{A} \hat{x}^k - b \|_2$,
we obtain
\begin{equation}
    A_k(\phi(\eta^k) + f(\hat{x}^k)) +2 RA_k \|\bm{A} \hat{x}^k - b\|_2 \leqslant 2R^2.
\label{eq:vpmfxh}
\end{equation}

Since  $\lambda^*$ is an optimal solution of Problem $(D_1)$, we have, for any $x \in Q$
$$
Opt[P_1]\leqslant f(x) + \langle  \lambda^*, \bm{A} x -b \rangle .
$$
Using the assumption that $\|\lambda^{*}\|_{2} \leqslant R$ %\eqref{eq:l_bound}
, we get
\begin{equation}
f(\hat{x}^k) \geqslant Opt[P_1]- R \|\bm{A} \hat{x}^k - b \|_2 .
\label{eq:fxhat_est}
\end{equation}
Hence,
\begin{multline}
    \phi(\eta^k) + f(\hat{x}^k) =\phi(\eta^k) - Opt[P_2]+Opt[P_2]+ Opt[P_1]  - Opt[P_1] + f(\hat{x}^k)=  \\
    =\phi(\eta^k)  - Opt[P_2]  - Opt[P_1] + f(\hat{x}^k)  
    \geqslant  - Opt[P_1] + f(\hat{x}^k) \stackrel{\eqref{eq:fxhat_est}}{\geqslant} - R \|\bm{A} \hat{x}^k - b \|_2.
\label{eq:aux1}        
\end{multline}

This and \eqref{eq:vpmfxh} give
$R \|A_k(\bm{A} \hat{x}^k - b) \|_2 \leqslant 2R^2.$ Hence, from \eqref{eq:aux1} we obtain
$A_k(\phi(\eta^k) + f(\hat{x}^k)) \geqslant -2R^2.$ On the other hand, from \eqref{eq:vpmfxh} we have 
$A_k(\phi(\eta^k) + f(\hat{x}^k)) \leqslant 2R^2.$ Combining all of these results, we conclude
\begin{equation}
    A_k\|\bm{A} \hat{x}^k - b \|_2 \leqslant 2R,  \quad A_k|\phi(\eta^k) + f(\hat{x}^k)| \leqslant2R^2.\label{eq:untileq_PDAAM-1}
\end{equation}
From \ref{AAM-2_Ak_rate}, for any $k\geqslant 0$, $A_k \geqslant \frac{k^2}{4Ln}$. Combining this and \eqref{eq:untileq_PDAAM-1}, we obtain the first two inequalities of the statement:

\begin{equation}
     \|\bm{A} \hat{x}^k - b \|_2 \leqslant \frac{8nLR}{k^2},\  |\phi(\eta^k) + f(\hat{x}^k)| \leqslant\frac{8nLR^2}{k^2}.\notag
\end{equation}
It remains to prove the third inequality. By the optimality condition for Problem $(P_1)$, we have
$$
\la \nabla f(x^*) + \bm{A}^T \lambda^*, \hat{x}_k - x^* \ra \geq 0, \quad \bm{A} x^* = b.
$$
 Then
\begin{equation}
    \la \nabla f(x^*) , \hat{x}_k - x^* \ra  \geq - \la \bm{A}^T \lambda^*, \hat{x}_k - x^* \ra
     =  - \la \lambda^{*}, \bm{A} \hat{x}_k - b  \ra
     \geq - R \|\bm{A} \hat{x}_k - b \|_2\geq - \frac{8nLR^2}{k^2},
\end{equation}
where we used the same reasoning as while deriving \eqref{eq:fxhat_est}. 
Using this inequality and the $\gamma$-strong convexity of $f$, we obtain
\begin{multline*}
\frac{\gamma}{2}\|\hat{x}_k-x^*\|_E^2 \leq f(\hat{x}_k)-Opt[P_1] - \la \nabla f(x^*) , \hat{x}_k - x^* \ra\\\leq f(\hat{x}_k)+\phi(\eta^k) + \la \nabla f(x^*) , \hat{x}_k - x^* \ra\leqslant \frac{8nLR^2}{k^2}+\frac{8nLR^2}{k^2}=\frac{16nLR^2}{k^2}, 
\end{multline*} or
\[\|\hat{x}_k-x^*\|_E\leq \frac{4}{k}\sqrt{\frac{2nLR^2}{\gamma}}.\]
	
\end{proof}

\iffalse
\begin{theorem}
\label{PD-bounds}
Let the objective $f(x)$ in the problem $(P_1)$ be $\gamma$-strongly convex w.r.t. $\|\cdot\|_E$, and let $\|\lambda^*\| \leqslant R$. Then, for the sequences $\hat{x}^{k},\eta^{k}$, $k\geqslant 0$, generated by Algorithm \ref{PDAAM-2}, 
\begin{equation}
    \|\bm{A} \hat{x}^k - b \|_2 \leqslant \frac{8n\|\bm{A}\|^2_{E\to H}R}{\gamma k^2},
    \quad
    |\phi(\eta^k) + f(\hat{x}^k)| \leqslant\frac{8n\|\bm{A}\|^2_{E\to H}R^2}{\gamma k^2},
\end{equation}
where $\|\bm{A}\|_{E\to H}$ is the norm of $\bm{A}$ as a linear operator from $E$ to $H$ and $\|\cdot\|_H = \|\cdot\|_2$.
% Moreover, no later than $k$ equals to  
% $$
% \max \left\{ \left\lceil \sqrt{\frac{32L(R_1^2+R_2^2)}{\eps_f}}  \right\rceil, \left\lceil \sqrt{\frac{16L(R_1^2+R_2^2)}{R_1\eps_{eq}}} \right\rceil, \left\lceil \sqrt{\frac{16L(R_1^2+R_2^2)}{R_2\eps_{in}}} \right\rceil \right\}, 
% $$
% the point $\hat{x}_{k+1}$ generated by Algorithm \ref{Alg:PDULSGD} is an approximate solution to Problem $(P_1)$ in the sense 
% \begin{multline}
% |f(\hat{x}) - Opt[P_1]| \leqslant \eps_f , \quad \|A_1\hat{x}-b_1\|_2 \leqslant \eps_{eq}, \quad \rho(A_2\hat{x}-b_2,-K) \leqslant \eps_{in}.
% \label{eq:sol_def}
% \end{multline}
\end{theorem}
\fi
\subsection*{Proof of Theorem~\ref{PD-bounds}}
\begin{proof}
%\begin{proof}[Proof of Theorem~\ref{PD-bounds}.]
The result follows from the previous theorem and the bound $L \leq \frac{\|\bm{A}\|^2_{E \rightarrow H}}{\gamma}$, which is shown in  \cite{nesterov2005smooth}.% This bound has to be used since Theorem \ref{Th:PD_rate} requires $L$-smoothness in $\|\cdot\|_2$. \pd{do we need the last sentence?}
\end{proof}

\section{Fixed-Step Accelerated Alternating Minimization}
In this section we introduce another variant of accelerated alternating minimization method. Algorithm 2 in the main text uses full relaxation on a segment to find the next iterate $y^k$. On the contrary, the method which we introduce in this section tries to adaptively find an approximation for the constant $L$ -- Lipschitz constant of the gradient. Based on this approximation, a fixed stepsize is used to find $y^k$.  
% Our accelerated alternating minimization method is constructed by replacing the line-search step of the AGMsDR method \cite{nesterov2018} with a block minimization step over the block corresponding to the largest in norm block of the gradient at the current iterate. This block selection rule is also known as the Gauss-Southwell rule \cite{hong2016unified}. It turns out that this idea is not restricted to the AGMsDR method and is equally applicable to the classic accelerated gradient method. The algorithm we propose is listed here as Algorithm~\ref{AAM-1}, and it incorporates block-wise minimization steps into an accelerated gradient method written in the form known as linear coupling \cite{allen2014linear}.
Thus, compared to the AAM algorithm presented in Section~\ref{S:AAM} of the main paper, this algorithm does not require solving any one-dimensional minimization problems during each iteration, but instead requires adapting to the smoothness parameter of the problem. This typically results in repeating each iteration twice. In our experience, which of the two method turns out to be more efficient significantly depends on the problem being solved (generally, the more difficult the function is to compute, the more taxing the line-search becomes) and the implementation of the line-search procedure. We also point out that we can not guarantee the convergence of Algorithm~\ref{AAM-1} to a stationary point for non-convex objectives.
In the experiments for the OT problem we use this algorithm and the result is denoted by AAM-A.

\begin{algorithm}[h]
 \caption{Fixed-Step Accelerated Alternating Minimization}
 \label{AAM-1}
 {\small
 \begin{algorithmic}[1]
   \REQUIRE starting point $x_0$, initial estimate of the Lipschitz constant $L_0$.
     \ENSURE $x^k$
   \STATE $x^0 = y^0= v^0$.
   \FOR{$k \geqslant 0$}
   \STATE Set $L_{k+1}=L_{k}/2$
   \WHILE{True}
   \STATE Set $a_{k+1}=\frac{1}{2L_{k+1}}+\sqrt{\frac{1}{4L^2_{k+1}}+a_k^2\frac{L_k}{L_{k+1}}}$
   Find $a_{k+1}$ s.t. $A_{k+1}:=a_{k+1}^2L_{k+1}=a_k^2L_k+a_{k+1}$.
   \STATE Set $\tau_k=\frac{1}{a_{k+1}L_{k+1}}$
 \STATE Set $y^{k}=\tau_k v^k+(1-\tau_k)x^k$\quad \COMMENT{Extrapolation step}
 \STATE Choose $i_k=\argmax\limits_{i\in\{1,\ldots,n\}} \|\nabla_i f(y^k)\|_2^2$
 \STATE Set $x^{k+1}=\argmin\limits_{x\in S_{i_k}(y^k)} f(x)$\quad  
 \STATE Set $v^{k+1} = v^{k} - a_{k+1}\nabla f(y^k)$\quad 
 \IF{$f(x^{k+1})\leqslant f(y^{k})-\frac{\|\nabla f(y^k)\|_2^2}{2L_{k+1}}$}
 \STATE \textbf{break}
 \ENDIF
 \STATE Set $L_{k+1}=2L_{k+1}$.
 \ENDWHILE
 \STATE $k = k + 1$
 \ENDFOR
 \end{algorithmic}
 }
 \end{algorithm}
 \iffalse
\begin{algorithm}[!ht]
\caption{Accelerated Alternating Minimization 1}
\label{AAM-1}
%{\small
\begin{algorithmic}[1]
   \REQUIRE Starting point $x^0$, initial guess $L_0 \geq 0$ for $L$.
    \ENSURE $x^k$
   %\STATE $x^0 = y^0= v^0$.
   \FOR{$k \geqslant 0$}
   \STATE Find the smallest $j_k \geq 0$ s.t. for $L_{k+1}:=2^{j_k-1}L_{k}$ it holds that $f(x^{k+1})\leqslant f(y^{k})-\frac{\|\nabla f(y^k)\|_2^2}{2L_{k+1}}$, where
   \STATE $a_{k+1}$ s.t. $a_{k+1}^2L_{k+1}=a_k^2L_k+a_{k+1}$, $\tau_k=\frac{1}{a_{k+1}L_{k+1}}$
 \STATE $y^{k}=\tau_k v^k+(1-\tau_k)x^k$\quad \COMMENT{Extrapolation step}
\STATE  $x^{k+1}=\argmin\limits_{x\in S_{i_k}(y^k)} f(x)$ with $i_k=\argmax\limits_{i\in\{1,\ldots,n\}} \|\nabla_i f(y^k)\|_2^2$ \COMMENT{Block minimization}
\STATE $v^{k+1} = v^{k} - a_{k+1}\nabla f(y^k)$ \COMMENT{Update momentum}\quad 
% \IF{$f(x^{k+1})\leqslant f(y^{k})-\frac{\|\nabla f(y^k)\|_2^2}{2L_{k+1}}$}
% \STATE \textbf{break}
% \ENDIF
% \STATE Set $L_{k+1}=2L_{k+1}$.
% \ENDWHILE
%\STATE $k = k + 1$
\ENDFOR
\end{algorithmic}
%}
\end{algorithm}
\fi
The convergence rate of Algorithm \ref{AAM-1} is given by the following theorem

\begin{theorem} 
\label{AAM-1_convergence}
Let the objective $f$ be convex and $L$-smooth. If $L_0\leqslant 4nL$, then after $k$ steps of Algorithm \ref{AAM-1} it holds that
\begin{equation}
    \label{eq:main_result}
    f(x^{k})-f(x^*) \leqslant \frac{4nL\|x^0-x^*\|_2^2}{k^2}.
\end{equation}
\end{theorem}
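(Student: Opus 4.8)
The plan is to run the standard ``linear coupling'' potential-function argument for accelerated gradient methods, with one extra ingredient that converts the exact block minimization into a guaranteed full-gradient decrease.

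\textbf{Step 1: the effective smoothness constant is $nL$.} First I would show that for every $k$ the inner \texttt{while} loop terminates with $L_{k+1}\leq 2nL$. Since $x^{k+1}=\argmin_{x\in S_{i_k}(y^k)}f(x)$ and $S_{i_k}(y^k)$ contains the point $y^k-\frac1L\nabla_{i_k}f(y^k)$, the descent lemma applied to the restriction of the $L$-smooth function $f$ to block $i_k$ gives $f(x^{k+1})\leq f(y^k)-\frac{1}{2L}\|\nabla_{i_k}f(y^k)\|^2$. The Gauss--Southwell choice $i_k=\argmax_i\|\nabla_i f(y^k)\|^2$ yields $\|\nabla_{i_k}f(y^k)\|^2\geq\frac1n\|\nabla f(y^k)\|^2$, hence $f(x^{k+1})\leq f(y^k)-\frac{1}{2nL}\|\nabla f(y^k)\|^2$. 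Therefore the acceptance test is satisfied as soon as $L_{k+1}\geq nL$, and combining this with the halving $L_{k+1}\leftarrow L_k/2$ at the start of the iteration and the doublings inside the loop, a short induction using $L_0\leq 4nL$ shows $L_{k+1}\leq 2nL$ for all $k$.

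\textbf{Step 2: a monotone potential.} Set $A_k=a_k^2L_k$ with the convention $a_0=0$, $A_0=0$ (so $A_{k+1}=A_k+a_{k+1}$ and $\tau_k=a_{k+1}/A_{k+1}$), and define $\Psi_k=A_k\bigl(f(x^k)-f(x_*)\bigr)+\tfrac12\|v^k-x_*\|^2$. I would prove $\Psi_{k+1}\leq\Psi_k$. The extrapolation step is exactly the identity $A_{k+1}y^k=A_kx^k+a_{k+1}v^k$; feeding it into convexity applied twice, to $f(y^k)-f(x^k)$ and to $f(y^k)-f(x_*)$, gives $A_{k+1}\bigl(f(y^k)-f(x_*)\bigr)\leq A_k\bigl(f(x^k)-f(x_*)\bigr)+a_{k+1}\la\nabla f(y^k),v^k-x_*\ra$. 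Now use the accepted-step inequality $f(x^{k+1})\leq f(y^k)-\frac{1}{2L_{k+1}}\|\nabla f(y^k)\|^2$ together with $A_{k+1}=a_{k+1}^2L_{k+1}$ to turn this into a bound on $A_{k+1}\bigl(f(x^{k+1})-f(x_*)\bigr)$, and expand $\tfrac12\|v^{k+1}-x_*\|^2$ using $v^{k+1}=v^k-a_{k+1}\nabla f(y^k)$: the cross term $a_{k+1}\la\nabla f(y^k),v^k-x_*\ra$ and the quadratic term $\tfrac{a_{k+1}^2}{2}\|\nabla f(y^k)\|^2$ cancel exactly, leaving $\Psi_{k+1}\leq\Psi_k$.

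\textbf{Step 3: growth of $A_k$ and conclusion.} Since $A_0=0$, telescoping Step 2 gives $A_k\bigl(f(x^k)-f(x_*)\bigr)\leq\tfrac12\|x^0-x_*\|^2$. From $a_{k+1}=\frac{1}{2L_{k+1}}+\sqrt{\frac{1}{4L_{k+1}^2}+a_k^2\frac{L_k}{L_{k+1}}}$ and $a_k^2L_k=A_k$ one gets $A_{k+1}-A_k=a_{k+1}\geq\sqrt{A_k/L_{k+1}}$, hence, using $\sqrt{A_{k+1}}=a_{k+1}\sqrt{L_{k+1}}$ and $L_{k+1}\leq 2nL$ from Step 1,
\begin{equation*}
\sqrt{A_{k+1}}-\sqrt{A_k}=\frac{a_{k+1}}{\sqrt{A_{k+1}}+\sqrt{A_k}}\geq\frac{a_{k+1}}{2\sqrt{A_{k+1}}}=\frac{1}{2\sqrt{L_{k+1}}}\geq\frac{1}{2\sqrt{2nL}}.
\end{equation*}
Summing yields $A_k\geq k^2/(8nL)$, and therefore $f(x^k)-f(x_*)\leq\frac{\|x^0-x_*\|^2}{2A_k}\leq\frac{4nL\|x^0-x_*\|^2}{k^2}$.

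\textbf{Main obstacle.} The potential telescoping of Step 2 is routine — it is the usual analysis of accelerated gradient descent with backtracking. The real content, and the place where the factor $n$ appears, is Step 1: one must argue that exact minimization over the single block picked by the Gauss--Southwell rule is never worse than a full gradient step with step size $1/(nL)$, which is what prevents the backtracking from inflating $L_{k+1}$ past $2nL$ and thereby preserves the $O(1/k^2)$ rate.
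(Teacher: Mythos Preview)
Your proof is correct and follows essentially the same approach as the paper. Step~1 reproduces the paper's argument verbatim; Step~2 is the paper's Lemmas~\ref{AAM-1_lemma_1}--\ref{AAM-1_lemma_2} repackaged as a potential-function inequality; and in Step~3 you obtain the same bound $A_k\geq k^2/(8nL)$ via the telescoping estimate $\sqrt{A_{k+1}}-\sqrt{A_k}\geq 1/(2\sqrt{L_{k+1}})$, whereas the paper establishes it by a direct induction on~$k$ --- a purely stylistic difference.
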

Unlike the AM algorithm, this method requires computing the whole gradient of the objective, which makes the iterations of this algorithm considerably more expensive. Also, even when the number of blocks is 2, the convergence rate of Algorithm~\ref{AAM-1} depends on the smoothness parameter $L$ of the whole objective, and not on the Lipschitz constants of each block on its own, which is the case for the AM algorithm \cite{beck2015convergence}. On the other hand, if we compare the Algorithm~\ref{AAM-1} algorithm to an adaptive accelerated gradient method, we will see that the  theoretical worst-case time complexity of Algorithm~\ref{AAM-1} method is only  $\sqrt{n}$ times worse, while in practice block-wise minimization steps may perform much better than gradient descent steps simply because they directly use some specific structure of the objective.

This convergence rate is $n$ times worse than that of an adaptive accelerated gradient method \cite{dvurechensky2018computational}, or, equivalently, this means that in the worst case it may take $\sqrt{n}$ times more iterations to guarantee accuracy $\eps$ compared to an adaptive accelerated gradient method.
To prove the convergence rate of the method, we will need a technical result.

\begin{lemma}
\label{AAM-1_lemma_1}
For any $u\in\mathbb{R}^N$
\begin{equation*}
    a_{k+1}\langle\nabla f(y^{k}),v^k-u\rangle
    \leqslant a_{k+1}^2L_{k+1}\left(f(y^{k})-f(x^{k+1})\right)
    +\frac{1}{2}\|v^k-u\|_2^2-\frac{1}{2}\|v^{k+1}-u\|_2^2. 
\end{equation*}
\end{lemma}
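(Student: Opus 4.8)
The plan is to treat the line $v^{k+1}=v^k-a_{k+1}\nabla f(y^k)$ as a (Euclidean) mirror‑descent step, extract an \emph{exact} identity from it, and then absorb the single leftover quadratic term using the line‑search exit test built into the inner \textbf{while} loop.

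First I would write $\nabla f(y^k)=\tfrac{1}{a_{k+1}}(v^k-v^{k+1})$, so that
\[
a_{k+1}\langle\nabla f(y^k),v^k-u\rangle=\langle v^k-v^{k+1},\,v^k-u\rangle .
\]
Applying the elementary identity $\langle b-c,\,b-a\rangle=\tfrac12\|b-c\|^2+\tfrac12\|b-a\|^2-\tfrac12\|c-a\|^2$ with $b=v^k$, $c=v^{k+1}$, $a=u$ yields the exact equality
\[
a_{k+1}\langle\nabla f(y^k),v^k-u\rangle=\tfrac12\|v^k-v^{k+1}\|^2+\tfrac12\|v^k-u\|^2-\tfrac12\|v^{k+1}-u\|^2 .
\]
The last two terms already coincide with the right‑hand side of the lemma, so everything reduces to bounding $\tfrac12\|v^k-v^{k+1}\|^2$.

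Since $v^k-v^{k+1}=a_{k+1}\nabla f(y^k)$, we have $\tfrac12\|v^k-v^{k+1}\|^2=\tfrac12 a_{k+1}^2\|\nabla f(y^k)\|^2$. Now I invoke the exit condition of the \textbf{while} loop: upon termination, the accepted value $L_{k+1}$ satisfies $f(x^{k+1})\leq f(y^k)-\frac{\|\nabla f(y^k)\|^2}{2L_{k+1}}$, equivalently $\|\nabla f(y^k)\|^2\leq 2L_{k+1}\bigl(f(y^k)-f(x^{k+1})\bigr)$. Multiplying by $\tfrac12 a_{k+1}^2$ gives $\tfrac12\|v^k-v^{k+1}\|^2\leq a_{k+1}^2 L_{k+1}\bigl(f(y^k)-f(x^{k+1})\bigr)$; substituting this into the displayed identity produces exactly the claimed inequality.

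There is essentially no hard step: the argument is a one‑line Euclidean expansion followed by the substitution of the line‑search bound. The only point requiring care is bookkeeping — ensuring that $L_{k+1}$ in the statement refers to the value \emph{after} the step has been accepted, so that the exit inequality is genuinely available. The fact that the inner loop terminates at all (and in fact with $L_{k+1}\leq 4nL$ when $L_0\leq 4nL$, which uses the Gauss–Southwell block‑minimization step) is a separate issue handled in the proof of Theorem~\ref{AAM-1_convergence} and is not needed for this lemma.
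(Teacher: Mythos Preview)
Your proof is correct and is essentially the same argument as the paper's: both expand the inner product using the update $v^{k+1}=v^k-a_{k+1}\nabla f(y^k)$ and a three-point Euclidean identity to reach $\tfrac12 a_{k+1}^2\|\nabla f(y^k)\|^2+\tfrac12\|v^k-u\|^2-\tfrac12\|v^{k+1}-u\|^2$, and then bound the first term via the line-search exit condition. Your presentation is in fact slightly cleaner, applying the polarization identity directly to $\langle v^k-v^{k+1},v^k-u\rangle$ rather than first splitting $v^k-u=(v^k-v^{k+1})+(v^{k+1}-u)$ as the paper does.
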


\begin{proof}
\begin{multline*}
    a_{k+1} \langle\nabla f(y^{k}),v^k-u\rangle 
    = a_{k+1}\langle\nabla f(y^{k}),v^k-v^{k+1}\rangle
    + a_{k+1}\langle\nabla f(y^{k}),v^{k+1}-u\rangle
    \\
    = a_{k+1}^2\|\nabla f(y^k)\|_2^2
    + \la v^k-v^{k+1},v^{k+1}-u\ra 
    \\
    = a_{k+1}^2\|\nabla f(y^k)\|_2^2
    + \frac{1}{2}\|v^k-u\|_2^2
    - \frac{1}{2}\|v^{k+1}-u\|_2^2 
    - \frac{1}{2}\|v^{k+1}-v^k\|_2^2
    \\
    \leqslant a_{k+1}^2L_{k+1} \Big(f(y^{k}) 
    - f(x^{k+1})\Big) 
    + \frac{1}{2}\|v^k-u\|_2^2 
    - \frac{1}{2}\|v^{k+1}-u\|_2^2.
\end{multline*}
Here the last inequality follows from line 11 of Algorithm~\ref{AAM-1}.
\end{proof}

\begin{lemma}
\label{AAM-1_lemma_2}
For any $u\in\mathbb{R}^N$ and any $k\geqslant 0$
\begin{equation*}
    a_{k+1}^2L_{k+1}f(x^{k+1})-\left(a^2_{k+1}L_{k+1}-a_{k+1}\right)f(x^k)
    +\frac{1}{2}\|v^{k}-u\|_2^2-\frac{1}{2}\|v^{k+1}-u\|_2^2\leqslant a_{k+1}f(u). 
\end{equation*}
\end{lemma}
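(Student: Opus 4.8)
The plan is to combine Lemma \ref{AAM-1_lemma_1} with the convexity of $f$ to convert the inner product $\langle \nabla f(y^k), v^k - u\rangle$ into a difference of function values. First I would write $v^k - u = \frac{1}{\tau_k}(y^k - u) - \frac{1-\tau_k}{\tau_k}(x^k - u)$, which follows directly from the extrapolation step $y^k = \tau_k v^k + (1-\tau_k)x^k$ on line 7. Substituting this into the left-hand side of Lemma \ref{AAM-1_lemma_1} and using $\tau_k = \frac{1}{a_{k+1}L_{k+1}}$ (so $a_{k+1}/\tau_k = a_{k+1}^2 L_{k+1} = A_{k+1}$ and $a_{k+1}(1-\tau_k)/\tau_k = A_{k+1} - a_{k+1} = a_k^2 L_k$), the quantity $a_{k+1}\langle\nabla f(y^k), v^k - u\rangle$ becomes $A_{k+1}\langle\nabla f(y^k), y^k - u\rangle - (A_{k+1}-a_{k+1})\langle\nabla f(y^k), y^k - x^k\rangle$.

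Next I would bound each of these two terms by convexity: $\langle\nabla f(y^k), y^k - u\rangle \geq f(y^k) - f(u)$ and $\langle\nabla f(y^k), y^k - x^k\rangle \geq f(y^k) - f(x^k)$. Applying these (with the correct signs — the second term enters with a minus sign, so we need the inequality in the direction that makes $-\langle\nabla f(y^k),y^k-x^k\rangle \leq f(x^k) - f(y^k)$, which is fine since $A_{k+1} - a_{k+1} \geq 0$) gives a lower bound on the left side of Lemma \ref{AAM-1_lemma_1} of the form $A_{k+1}(f(y^k) - f(u)) - (A_{k+1} - a_{k+1})(f(y^k) - f(x^k))$, i.e. $a_{k+1}f(y^k) - A_{k+1}f(u) + (A_{k+1}-a_{k+1})f(x^k)$.

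Then I would chain this with the right-hand side of Lemma \ref{AAM-1_lemma_1}, namely $A_{k+1}(f(y^k) - f(x^{k+1})) + \frac12\|v^k - u\|^2 - \frac12\|v^{k+1}-u\|^2$, and rearrange. The $A_{k+1}f(y^k)$ terms cancel between $a_{k+1}f(y^k) + (A_{k+1}-a_{k+1})f(y^k)$ on the lower-bound side and $A_{k+1}f(y^k)$ on the upper-bound side, leaving exactly $A_{k+1}f(x^{k+1}) - (A_{k+1} - a_{k+1})f(x^k) + \frac12\|v^k-u\|^2 - \frac12\|v^{k+1}-u\|^2 \leq a_{k+1}f(u)$, which is the claim since $A_{k+1} = a_{k+1}^2 L_{k+1}$ and $A_{k+1} - a_{k+1} = a_{k+1}^2 L_{k+1} - a_{k+1}$.

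The computation is essentially bookkeeping; the only place requiring care is the algebraic identity expressing $v^k - u$ as an affine combination of $y^k - u$ and $x^k - u$ and tracking that the coefficient $A_{k+1} - a_{k+1} = a_k^2 L_k$ is nonnegative so that the convexity inequality is applied in the right direction. I expect the main (minor) obstacle to be keeping the signs straight when the second convexity bound is multiplied by a negative-looking coefficient; once that is handled, the cancellation of the $f(y^k)$ terms is automatic and the result drops out.
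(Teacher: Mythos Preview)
Your overall plan is precisely the paper's: combine Lemma~\ref{AAM-1_lemma_1} with the extrapolation identity and convexity, using $\tau_k=1/(a_{k+1}L_{k+1})$. However, your intermediate decomposition has a slip that, as written, makes the argument fail. From $v^k-u=\tfrac{1}{\tau_k}(y^k-u)-\tfrac{1-\tau_k}{\tau_k}(x^k-u)$ you get
\[
a_{k+1}\langle\nabla f(y^k),v^k-u\rangle
= A_{k+1}\langle\nabla f(y^k),y^k-u\rangle
-(A_{k+1}-a_{k+1})\langle\nabla f(y^k),\,x^k-u\rangle,
\]
not $\langle\nabla f(y^k),y^k-x^k\rangle$ in the second term. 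Convexity at $y^k$ does not control $\langle\nabla f(y^k),x^k-u\rangle$, and with your (incorrect) $y^k-x^k$ term the coefficient is negative, so the convexity inequality you quote gives an \emph{upper} bound on that piece, not the lower bound you need to chain with Lemma~\ref{AAM-1_lemma_1}. Hence the ``lower bound'' you wrote down is not actually a lower bound.

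The repair is immediate and coincides with what the paper does: substitute $x^k-u=(y^k-u)-(y^k-x^k)$ to obtain the equivalent decomposition
\[
a_{k+1}\langle\nabla f(y^k),v^k-u\rangle
= a_{k+1}\langle\nabla f(y^k),y^k-u\rangle
+(A_{k+1}-a_{k+1})\langle\nabla f(y^k),y^k-x^k\rangle,
\]
now with both coefficients nonnegative. Convexity then gives the lower bound $A_{k+1}f(y^k)-a_{k+1}f(u)-(A_{k+1}-a_{k+1})f(x^k)$, the $A_{k+1}f(y^k)$ cancels against the one from Lemma~\ref{AAM-1_lemma_1}, and the claimed inequality follows. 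This is exactly the paper's proof, just ordered slightly differently.
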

\begin{proof}
\begin{multline}
    a_{k+1}(f(y^{k})-f(u)) 
    \leq a_{k+1}\langle \nabla f(y^{k}), y^{k} - u\rangle 
    \\
    = a_{k+1}\langle \nabla f(y^k), y^k - v^k\rangle + a_{k+1}\langle \nabla f(y^k), v^k - u\rangle
    \\
    \stackrel{\scriptsize{\circled{1}}}{=} \frac{(1-\tau_k)a_{k+1}}{\tau_k}\langle \nabla f(y^k), x^k-y^k\rangle
    +a_{k+1}\langle \nabla f(y^k), v^k-u\rangle    
    \\
    \stackrel{\scriptsize{\circled{2}}}{\leqslant} \frac{(1-\tau_k)a_{k+1}}{\tau_k}\left(f(x^k)-f(y^k)\right)
    +a^2_{k+1}L_{k+1}\left(f(y^k)-f(x^{k+1})\right) 
    +\frac{1}{2}\|v^{k}-u\|_2^2
    -\frac{1}{2}\|v^{k+1}-u\|_2^2
    \\
    \stackrel{\scriptsize{\circled{3}}}{=} (a^2_{k+1}L_{k+1}-a_{k+1})f(x^k)
    -a_{k+1}^2L_{k+1}f(x^{k+1})
    +a_{k+1}f(y^k)
    +\frac{1}{2}\|v^{k}-u\|_2^2
    -\frac{1}{2}\|v^{k+1}-u\|_2^2.
\end{multline}
Here, $\circled{1}$ uses the fact that our choice of $y^k$ satisfies $\tau_k(y^k-v^k)=(1-\tau_k)(x^k-y^k)$. $\circled{2}$ is by convexity of $f(\cdot)$ and Lemma~\ref{AAM-1_lemma_1} , while $\circled{3}$ uses the choice of $\tau_k=\frac{1}{a_{k+1}L_{k+1}}$.
\end{proof}

\begin{proof}[\textbf{\textup{Proof of Theorem~\ref{AAM-1_convergence}}}.]
Note that 
$$a_{k+1} = \frac{1}{2L_{k+1}} + \sqrt{\frac{1}{4L^2_{k+1}} + a^2_k\frac{L_k}{L_{k+1}}}$$ 
satisfies the equation
$a_{k+1}^2L_{k+1}=a_k^2L_k+a_{k+1}$. We also have $a_1=\frac{1}{L_{k+1}}$. With that in mind, we sum up the inequality in the statement of  Lemma~\ref{AAM-1_lemma_2} for $k=0,\dots, T-1$ and set $u=x^*$:
\[
L_Ta^2_Tf(x^{T}) +\frac{1}{2}\|v^{0}-x^*\|_2^2-\frac{1}{2}\|v^{T}-x^*\|_2^2
    \leq \sum_{k=0}^{T-1} a_k f(x^*)=L_Ta^2_T f(x^*).
\]
Denote $A_k=a_k^2L_k$. Since $v^0=x^0$, we now have that for any $T\geq 1$
\[f(x^T)-f(x^*)\leqslant\frac{\|x^0-x^*\|_2^2}{2A_T}.\]
It remains to estimate $A_T$ from below. We will now show by induction that $A_k\geqslant\frac{nk^2}{8L}$.
From the $L$-smoothness of the objective we have 

\[
f(x^{k+1})=\argmin_{x\in S_{i_k}(y^k)} f(x) \leqslant f(y^k-\frac{1}{L}\nabla_{i_k} f(y^k))
    \leqslant f(y^k)-\frac{1}{2L}\|\nabla_{i_k} f(y^k)\|_2^2.
\]

Also, since $i_k$ is chosen by the Gauss--Southwell rule, it is true that \[\|\nabla_{i_k} f(y^k)\|_2^2\geqslant\frac{1}{n}\|\nabla f(y^k)\|_2^2.\] As a result,
\[f(x^{k+1})\leqslant f(y^k)-\frac{1}{2nL}\|\nabla f(y^k)\|_2^2.\]
This implies that the condition in line 11 of Algorithm~\ref{AAM-1} is automatically satisfied if $L_{k+1}\geqslant nL$. Combined with the fact that we multiply $L_{k+1}$ by 2 if this condition is not met, this means that if $L_{k+1}\leqslant 2Ln$ at the beginning of the while loop during iteration $k$, then it is sure to hold at the end of the iteration too. This is guaranteed by our assumption that $L_0\leqslant 4Ln$.

We have just shown that  $L_{k}\leqslant 2Ln$ for $k\geqslant1$.The base case $k=0$ is trivial. Now assume that $A_k \geq \frac{k^2}{8nL}$ for some k. Note that $A_{k+1} = L_k a^2_k + a_{k+1} = A_k + a_{k+1}$ 
and $L_{k+1} = \frac{A_{k+1}}{a^2_{k+1}}$.
\begin{equation*}
    a_{k+1} = \frac{1}{2L_{k+1}} + \sqrt{\frac{1}{4L^2_{k+1}} + a^2_k\frac{L_k}{L_{k+1}}}
    \geqslant\frac{1}{4nL}+\sqrt{\frac{1}{16n^2L^2} + a^2_k\frac{L_k}{2nL}}
    \geqslant\frac{1}{4nL}\left(1+\sqrt{1+8A_knL}\right)\geqslant\frac{k+1}{4nL}.
\end{equation*}
Finally, \[A_{k+1}=A_{k}+a_{k+1}\geqslant\frac{k^2+2(k+1)}{8nL}\geqslant\frac{(k+1)^2}{8nL}.\]
By induction, we have $\forall k\geqslant 1$ 
\begin{equation}
    A_k\geqslant\frac{k^2}{8nL}\quad  \label{A_k-AAM1}
\end{equation}
and 
\[f(x^k)-f(x^*)\leqslant \frac{4nL\|x^0-x^*\|_2^2}{k^2}.\]

\end{proof}

We also note that the assumption $L_0\leqslant 4nL$ is not really crucial. In fact, if $L_0>4nL$, then after $O(\log_2\frac{L_0}{4L})$ iterations $L_k$ is surely lesser than $4L$, so overestimating $L$ only results in a logarithmic in $\frac{L_0}{L}$ amount of additional iterations needed to converge. 
%\begin{wrapfigure}{r}{0.5\textwidth}
%\begin{minipage}{0.5\textwidth}
%\end{minipage}
%\end{wrapfigure}

\subsection{Primal-Dual Extension for Fixed Step Accelerated Alternating Minimization}

Our primal-dual algorithm based on Algorithm~\ref{AAM-1} for Problem $(P_1)$ is listed below as Algorithm \ref{PDAAM-1}.

\begin{algorithm}[H]
\caption{Primal-Dual Accelerated Alternating Minimization}
\label{PDAAM-1}

\begin{algorithmic}[1]
   \REQUIRE initial estimate of the Lipschitz constant $L_0$.
   \STATE $A_0=a_0=0$, $\eta_0=\zeta_0=\lambda_0=0$.
   \FOR{$k \geqslant 0$}
   \STATE Set $L_{k+1}=L_{k}/2$
   \WHILE{True}
   \STATE Set $a_{k+1}=\frac{1}{2L_{k+1}}+\sqrt{\frac{1}{4L^2_{k+1}}+a_k^2\frac{L_k}{L_{k+1}}}$
   \STATE Set $\tau_k=\frac{1}{a_{k+1}L_{k+1}}$
\STATE Set $\lambda^{k}=\tau_k \zeta^k+(1-\tau_k)\eta^k$
\STATE Choose $i_k=\argmax\limits_{i\in\{1,\ldots,n\}} \|\nabla_i \varphi(\lambda^k)\|_2^2$
\STATE Set $\eta^{k+1}=\argmin\limits_{\eta\in S_{i_k}(\lambda^k)} \varphi(\eta)$
\STATE Set $\zeta^{k+1} = \zeta^{k} - a_{k+1}\nabla f(\lambda^k)$
\IF{$\varphi(\eta^{k+1})\leqslant \varphi(\lambda^{k})-\frac{\|\nabla \varphi(\lambda^k)\|_2^2}{2L_{k+1}}$}
\STATE $\hat{x}^{k+1} = \frac{a_{k+1}x(\lambda^{k})+L_ka^2_k\hat{x}^{k}}{L_{k+1}a^2_{k+1}}.$
\STATE \textbf{break}
\ENDIF

\STATE Set $L_{k+1}=2L_{k+1}$.
\ENDWHILE
\ENDFOR
\ENSURE The points $\hat{x}^{k+1}$, $\eta^{k+1}$.
\end{algorithmic}

\end{algorithm}
\iffalse

\begin{algorithm}[h!]
\caption{Primal-Dual  Accelerated Alternating Minimization}
\label{PAAM-2}
{\small
\begin{algorithmic}[1]
   \REQUIRE starting point $\lambda_0 = 0$.
   \STATE Set $k=0$, $A_0=a_0=0$, $\eta_0=\zeta_0=\lambda_0=0$.
   \REPEAT
		\STATE $\beta_k = \argmin_{\beta \in \left[0, 1 \right]} \vp\left(\zeta^k + \beta (\eta^k - \zeta^k)\right)$; $\lambda^k = \zeta^k + \beta_k (\eta^k - \zeta^k)$ 
		%x -> \eta ; v -> \zeta; y -> \lambda
		\STATE Set $i_k=\argmax\limits_{i} \|\nabla_{i}\vp(\lambda^k)\|_2^2$
        \STATE  $\eta^{k+1}=\argmin\limits_{S^{i_k}(\eta^k)} \vp(\lambda)$
        
		\STATE Find greatest $a_{k+1}$ from the equation\\ $\vp(\lambda^k)-\frac{a_{k+1}^2}{2(A_k+a_{k+1})}\|\nabla \vp(\lambda^k)\|_2^2=\vp(\eta^{k+1})$
	    \STATE Set $\zeta^{k+1}=\zeta^k-a_{k+1}\nabla\vp(\lambda^k)$ 
		\STATE Set
				\begin{equation}
					\hat{x}^{k+1} = \frac{1}{A_{k+1}}\sum_{i=0}^{k} a_{i+1} x(\lambda^i) = \frac{a_{k+1}x(\lambda^{k})+A_k\hat{x}^{k}}{A_{k+1}}.
				\notag
				%\label{eq:xhat_def}
				\end{equation}
			\STATE Set $k=k+1$.
  \UNTIL{$|f(\hat{x}^{k+1})+\vp(\eta^{k+1})| \leqslant \tilde{\eps}_f$, $\|A\hat{x}^{k+1}-b\|_{2} \leqslant \tilde{\eps}_{eq}$.}
	\ENSURE The points $\hat{x}^{k+1}$, $\eta^{k+1}$.	
\end{algorithmic}
}
\end{algorithm}

\fi

The key result for this method is that it guarantees convergence in terms of the constraints and the duality gap for the primal problem, provided that it is strongly convex.

\begin{theorem}
\label{Th:PD_rate-fixed}
Let the objective $\vp$ in the problem $(P_2)$ be $L$-smooth and the solution of this problem be bounded, i.e. $\|\lambda^*\|_2 \leqslant R$. Then, for the sequences $\hat{x}_{k+1},\eta_{k+1}$, $k\geqslant 0$, generated by Algorithm \ref{PDAAM-1}, 
\begin{equation*}
    \hspace{-1em}\|\bm{A} \hat{x}^k - b \|_2 \leqslant \frac{16nLR}{k^2},  |\vp(\eta^k) + f(\hat{x}^k)| \leqslant\frac{16nLR^2}{k^2}.
%\label{eq:untileq}
\end{equation*}
\end{theorem}

\begin{proof}
Once again, denote $A_k=a_k^2L_{k}$ and note that $A_{k+1}=A_k+a_{k+1}$. From the proof of Lemma~\ref{AAM-1_lemma_2} we have for all $\lambda\in H$
\begin{equation*}
    a_{j+1}\la\nabla \varphi(\lambda^j),\lambda^j-\lambda\ra
    \leqslant A_j\varphi(\eta^j)-A_{j+1}\varphi(\eta^{j+1})+a_{j+1}\varphi(\lambda^j)+\frac{1}{2}\|\zeta^{j}-\lambda\|_2^2-\frac{1}{2}\|\zeta^{j+1}-\lambda\|_2^2.
\end{equation*}
We take a sum of these inequalities for $j=0,\ldots, k-1$ and rearrange the terms:
\begin{equation*}
    A_k\varphi(\eta^k)\leqslant  \sum_{j=0}^{k-1}\left\{a_{j+1} \left( \vp(\lambda^j) + \langle \nabla \vp(\lambda^j), \lambda-\lambda^j \rangle \right) \right\} + \frac{1}{2} \|\zeta^0-\lambda\|_2^2-\frac{1}{2} \|\zeta^k-\lambda\|_2^2.
\end{equation*}

If we drop the last negative term and notice that this inequality holds for all $\lambda\in H$, we arrive at
\begin{equation}
    A_k \vp(\eta^k) \leqslant \min_{\lambda \in \Lambda} \left\{\sum_{j=0}^{k-1}\{a_{j+1} ( \vp(\lambda^j)\right.\notag
    +\left. \langle \nabla \vp(\lambda^j), \lambda-\lambda^j \rangle) 
    + \frac{1}{2} \|\lambda\|_2^2 \vphantom{\sum_{j=0}^{k-1}}\right\},\label{eq:FGM_compl-fixed}
\end{equation}

From this point onwards, the proof mimics the proof of Theorem~\ref{Th:PD_rate}  word-for-word. The only difference is the different bound on $A_k$, which is $A_k \geqslant \frac{k^2}{8Ln}$ as in Theorem~\ref{AAM-1_convergence}.
\end{proof}

\iffalse
Of course, Algorithm~\ref{AAM-2} may also be applied to the dual problem, and the same result as in Theorem~\ref{Th:PD_rate} holds, although with a \na{ slightly different constant. The details are in the supplementary material.}
\fi

%\section{Accelerated Sinkhorn's algorithm}
\section{Details for Section~\ref{S:ASA}: Application to Optimal Transport and Wasserstein Barycenter}

\subsection{Derivation of the dual entropy-regularized OT problem}

The dual problem is constructed as follows.
\begin{multline}
    \min _{X \in Q\cap\mathcal{U}(r,c)}\langle C, X\rangle+\gamma\langle X, \ln X\rangle 
    \\
    =\min _{X \in Q} \max _{y, z \in \mathbb{R}^{N}} \Big\{\langle C, X\rangle\notag
    +\gamma\langle X, \ln X\rangle+\langle y, X \mathbf{1}-r\rangle+\left\langle z, X^{T} \mathbf{1}-c\right\rangle \Big\}\notag
    \\
    = \max _{y, z \in \mathbb{R}^{N}}\Big\{-\langle y, r\rangle-\langle z, c\rangle\notag
    +\min _{X\in Q} \sum_{i, j=1}^{N} X^{i j}\left(C^{i j}+\gamma \ln X^{i j}+y^{i}+z^{j}\right) \Big\}
    % \label{dual-prob}
\end{multline}
Since the derivative of the entropy grows exponentially as $X^{ij} \to 0$, the objective under $\min_{X\in Q}$ grows as $X^{ij} \to 0$. This means that at the minimum point all the components $X^{ij} > 0$. 
Our next goal is to find $\min_{X\in Q}$. Using Lagrange multipliers for the constraint $\mathbf{1}^TX\mathbf{1}=1$, we obtain the problem
\begin{equation*}
    \min _{X^{ij}>0}\max_\nu \Bigg\{\sum_{i, j=1}^{N}\left[ X^{i j}\left(C^{i j}+\gamma \ln X^{i j}+y^{i}+z^{j}\right)\right]
    -\nu\bigg[\sum_{i, j=1}^{N}X^{i j}-1\bigg]\Bigg\},
\end{equation*}
we obtain that the solution to this problem is 
\[X^{ij}=\frac{\exp \left(-\frac{1}{\gamma}\left(y^{i}+z^{j}+C^{i j}\right)\right)}{\sum_{i,j=1}^n\exp \left(-\frac{1}{\gamma}\left(y^{i}+z^{j}+C^{i j}\right)\right)}\]

This allows us to write the dual problem as \begin{equation}
    %\label{OT_dual}
    \min_{y,z\in\mathbb{R}^N}  
    \phi(y,z)=\gamma\ln\left(
    \sum_{i,j=1}^N \exp \left(- ({y^i+z^j+C^{i j}})/{\gamma} \right)
    \right)+\la y,r \ra + \la z,c \ra.
\end{equation}
By performing a change of variables $u=-y / \gamma, v=-z / \gamma$  in~\eqref{OT_dual} we arrive at an equivalent, but possibly more well-known formulation
\begin{equation}
    \label{OT_dual_2}
    \min_{u,v\in\mathbb{R}^N}  \vp(u,v)=\gamma(\ln\left(\mathbf{1}^TB(u,v)\mathbf{1}\right)-\la u,r \ra - \la v,c \ra),
\end{equation}
\begin{equation}
    [B(u,v)]^{ij} = \exp \left(u^{i}+v^{j} -\frac{C^{i j}}{\gamma}\right).
\end{equation}
Note that to distinguish between the dual problem in terms of variables $(y,z)$ and its reformulation in terms of variables $(u,v)$ we use $\phi(y,z)$ in the first case and $\vp(u,v)$ in the second. This also means that $\phi(y,z)=\vp(-y/\gamma,-z/\gamma)$ by definition.

\subsection{Deriving Sinkhorn's algorithm as AM for the dual problem}
\begin{lemma}
\label{Sinkhorn=AM}
The iterations \[u^{k+1} \in \argmin_{u\in\mathbb{R}^N}\vp(u,v^k),\, v^{k+1}\in \argmin_{v\in\mathbb{R}^N}\vp(u^{k+1},v),\] can be written explicitly as
\[u^{k+1}=u^{k}+\ln r-\ln \left(B\left(u^k, v^k\right) \mathbf{1}\right), \]
\[v^{k+1}=v^{k}+\ln c-\ln \left(B\left(u^{k+1}, v^k\right)^T \mathbf{1}\right). \]  
\end{lemma}
\begin{proof}
From optimality conditions, for $u$ to be optimal, it is sufficient to have $\nabla_u\varphi(u,v)=0$, or
\begin{equation}
    r-(\mathbf{1}^TB(u,v^k)\mathbf{1})^{-1}B(u,v^k)\mathbf{1}=0.
\end{equation}
Now we check that it is, indeed, the case for $u=u^{k+1}$ from the statement of this lemma. We manually check that
\begin{multline*}
    B(u^{k+1},v^k)\mathbf{1}=\diag(e^{(u^{k+1}-u^k)})B(u^k,v^k)\mathbf{1}=\diag(e^{\ln r -\ln(B(u^k,v^k)\mathbf{1})})B(u^k,v^k)\mathbf{1}=
    \\
    =\diag(r)\diag(B(u^k,v^k)\mathbf{1})^{-1}B(u^k,v^k)\mathbf{1}=\diag(r)\mathbf{1}=r
\end{multline*}
and the  conclusion then follows from the fact that \[\mathbf{1}^TB(u^{k+1},v^k)\mathbf{1}=\mathbf{1}^Tr=1.\] The optimality of $v^{k+1}$ can be proven in the same way.
\end{proof}

\subsection{Complexity bound for the non-regularized optimal transport}

Next we describe how to apply our Algorithm \ref{PDAAM-2} and Theorem \ref{PD-bounds} to find the \textit{non-regularized} OT distance with accuracy $\eps$, i.e. find $\widehat{X} \in \mathcal{U}(r,c)$ s.t. $\la C,\widehat{X}\ra - \la C,X^* \ra \leq \eps$. Algorithm~\ref{Alg:OTbyAccS} is the pseudocode of our new algorithm for approximating the \emph{non-regularized} OT distance.

% \begin{wrapfigure}{r}{0.5\textwidth}
% \vspace{-0.8cm}
% \begin{minipage}{0.5\textwidth}
\begin{algorithm}[!ht]
  \caption{Accelerated Sinkhorn for OT}
  \label{Alg:OTbyAccS}
  \begin{algorithmic}[1]
    \REQUIRE{Accuracy $\eps$.}
    \STATE Set $\gamma = \frac{\eps}{3 \ln N}$, $\eps' = \frac{\eps}{8 \|C\|_\infty}$.
    \iffalse\STATE Find $\tilde{r}, \tilde{c} \in S_{N}(1)$ s.t. $\|\tilde{r} - r\|_1 \le \eps' / 4$, $\|\tilde{c} - c\|_1 \le \eps' / 4$  and $\min_i \tilde{r}^i \ge \eps' / (8 N)$, $\min_j \tilde{c}^j \ge \eps' / (8 N)$. 
		
		E.g., $(\tilde{r},\tilde{c})=\left(1 - \frac{\eps'}{8}\right)\left((r,c) + \frac{\eps'}{8N} (\one,\one) \right)$.%, $\tilde{c}=(1 - \e' / 8)(c + \e' / (n(8 - \e'))\textbf{1})$.
		\fi
	\STATE 	Set $(\tilde{r},\tilde{c})=\left(1 - \frac{\eps'}{8}\right)\left((r,c) + \frac{\eps'}{8N} (\one,\one) \right)$
    \FOR{$k=1,2,...$}
	\STATE Perform an iteration of Algorithm~\ref{PDAAM-2} for the OT problem with marginals $\tilde{r},\tilde{c}$ and calculate $\widehat{X}_k$ and $\eta_k$.
	\STATE Find $\widehat{X}$ as the projection of $\widehat{X}_k$ on $\mathcal{U}(r,c)$ by Algorithm 2 of \cite{altschuler2017near-linear}.
  \STATE {\textbf{if} $\la C,\widehat{X}-\widehat{X}_k\ra \leq \frac{\eps}{6}$ and $f(\hat{x}_k)+\phi(\eta_k) \leq \frac{\eps}{6}$}
  \STATE \textbf{then} Return $\widehat{X}$.
	%\ELSE
	%\STATE \textbf{else} $k = k+1$ and continue.
  %\ENDIF
   \ENDFOR
    % \STATE Calculate $B$ by Algorithm~\ref{AS-1} with marginals $\tilde{r}$, $\tilde{c}$ and accuracy $\eps' / 2$.
    % \STATE Find $\widehat{X}$ as the projection of $B$ on $\mathcal{U}(r,c)$ by Algorithm~2 in \cite{altschuler2017near-linear}.
    % \ENSURE $\widehat{X}$.
  \end{algorithmic}
\end{algorithm}
% \end{minipage}
% \vspace{-0.7cm}
% \end{wrapfigure}

Taking the bounds in \eqref{eq:APDAAM_bound} instead of bounds in  \cite{dvurechensky2018computational}[Theorem 3] and repeating the proof steps in \cite{dvurechensky2018computational}[Theorem 4] together with \cite{dvurechensky2018computational}[Theorem 2], we obtain the final bound of the complexity to find an $\eps$-approximation for the non-regularized OT problem to be $O\left(\frac{N^{5/2}\sqrt{\ln N}\|C\|_\infty}{\varepsilon}\right)$. To show this, we equip the primal space $E$ with 1-norm and the dual space $H$ with 2-norm. We define $\bm{A}:\R^{N\times N}\rightarrow\R^{2N}$ as the linear operator defining the linear constraints of the problem \eqref{OT}, which is in this case defined as
$\bm{A}\vectr{X} = ((X\one)^T, (X^T\one))^T$. Then, $\|\bm{A}\|^2_{1 \rightarrow 2}=2$.
% The dual function $\phi(y,z)$ of \eqref{OT_dual}, as a dual of a strongly convex function on $E$ with chosen 1-norm $\|\cdot\|_1$, is $L$-smooth w.r.t. $\|\cdot\|_2$ with the smoothness parameter $L \leq \|\bm{A}\|^2_{1 \rightarrow 2}/\gamma$ ~\cite{nesterov2005smooth}, \ where $\bm{A}:\R^{N\times N}\rightarrow\R^{2N}$ is the linear operator defining the linear constraints of the problem, which is in the case of the optimal transportation problem defined as
% $\bm{A}\vectr{X} = ((X\one)^T, (X^T\one))^T$, $\|\bm{A}\|_{1 \rightarrow 2} = \max_{u,v} \left\{{\la Au, v\ra : \|u\|_1 = 1, \|v\|_2 = 1}\right\}$. Due to the structure of the matrix, $\|\bm{A}\|^2_{1 \rightarrow 2}=2$ and $L=2/\gamma$.
Besides the Lipschitz constant, we need to bound the norm of the solution to the dual problem \eqref{OT_dual} since that norm enters the convergence rate in Theorem \ref{PD-bounds}.
To obtain the bound we need two following lemmas.

\begin{lemma}
\label{OT_dual_norm}
Denote $\nu=\min\limits_{i,j} K^{ij}=e^\frac{-\|C\|_\infty}{\gamma}.$ Any solution $(u^*,v^*)$ of the dual problem \eqref{OT_dual_2} satisfies
\begin{equation}
    \max u_i^*-\min u_i^*\leqslant -\ln\nu\min_{i}r_i,\quad \max v_i^*-\min v_i^*\leqslant -\ln\nu\min_{i}c_i. \notag
\end{equation}
\end{lemma}
\begin{proof}
Taking the derivative of the dual objective with respect to $u$ and denoting $\Sigma=\mathbf{1}^TB(u^*,v^*)\mathbf{1}$, we obtain that
\[\nabla_u \vp(u^*,v^*)=r-\Sigma^{-1}B(u^*,v^*)\mathbf{1}.\] From the first order optimality conditions we have $\nabla_u \vp(u^*,v^*)=0$. Then we have
\[1\geqslant r_i =\Sigma^{-1}[B(u^*,v^*)\mathbf{1}]_i\geqslant\Sigma^{-1}e^{u^*_i}\nu\langle\mathbf{1},e^{v^*}\rangle.\] From this for all $i$ we get an upper bound \[ u^*_i\leqslant\ln\Sigma-\ln\nu\langle\mathbf{1},e^{v^*}\rangle.\]
On the other hand, since $C^{ij}>0$, we have $K^{ij}\leqslant 1$ and 
\[r_i =\Sigma^{-1}[B(u^*,v^*)\mathbf{1}]_i\leqslant\Sigma^{-1}e^{u_i^*}\langle\mathbf{1},e^{v^*}\rangle, \quad u_i^*\geqslant \ln\Sigma+\ln r_i -\ln\langle\mathbf{1},e^{v^*}\rangle.\] Combining the two above results, we obtain \[\max u_i^*-\min u_i^*\leqslant -\ln\nu\min_{i}r_i.\] The result for $v_i^*$ holds by the same exact argument.
\end{proof}

\begin{lemma}
\label{L:R}
%Let $(y^0,z^0)=(-\frac{\gamma}{2}\mathbf{1},-\frac{\gamma}{2}\mathbf{1})$.
There exists a solution $(y^*,z^*)$ of \eqref{OT_dual} such that
\begin{equation*}
    \|(y^*,z^*)\|_2\leqslant R:= \sqrt{N/2}\left(\|C\|_\infty-\frac{\gamma}{2}\ln{\min\limits_{i,j}\{r_i,c_j\}}\right).
\end{equation*}
\end{lemma}

\begin{proof}
We begin by deriving an upper bound on $\|(u^*,v^*)\|_2$. Using the results of the previous lemma, it remains to notice that the objective $\vp(u,v)$ is invariant under transformations $u\to u+t_u\mathbf{1}$, 
$u\to u+t_v\mathbf{1}$, with 
$t_u,t_v\in\mathbb{R}$, so there must exist some solution with
$\max_i u^*_i=-\min_i u^*_i=\|u^*\|_\infty$, $\max_i v_i=-\min_i v_i=\|v^*\|_\infty$, so
\begin{equation*}
    \|u^*\|_\infty\leqslant-\frac{1}{2}\ln\nu\min_ir_i, \;
    \|v^*\|_\infty\leqslant-\frac{1}{2}\ln\nu\min_ic_i.
\end{equation*}
As a consequence,
\begin{multline*}
    \|(u^*,v^*)\|_2\leqslant\sqrt{2N}\|(u^*,v^*)\|_\infty \leqslant-\sqrt{N/2}\ln\nu\min_{i,j}\{r_i,c_j\}
    \\
    \leqslant\sqrt{N/2}\left(\frac{\|C\|_\infty}{\gamma}-\frac{1}{2}\ln{\min\limits_{i,j}\{r_i,c_j\}}\right).
\end{multline*}
By definition, $u=-\frac{1}{\gamma}y-\frac{1}{2}\mathbf{1}$, $v=-\frac{1}{\gamma}z-\frac{1}{2}\mathbf{1}$, so we have the inverse transformation $y=-\gamma u -\frac{\gamma}{2}\mathbf{1}$, $z=-\gamma v -\frac{\gamma}{2}\mathbf{1}.$ Finally,
\begin{multline*}
    R=\|(y^*,z^*)-(y^0,z^0)\|_2=\left\Vert(-\gamma u^* -\frac{\gamma}{2}\mathbf{1},-\gamma v^* -\frac{\gamma}{2}\mathbf{1})-(-\frac{\gamma}{2}\mathbf{1}, -\frac{\gamma}{2}\mathbf{1})\right\Vert_2
    \\
    =\|-\gamma(u^*,v^*)\|_2=\gamma\|(u^*,v^*)\|_2\leqslant
    \sqrt{N/2}\left(\|C\|_\infty-\frac{\gamma}{2}\ln{\min\limits_{i,j}\{r_i,c_j\}}\right)
\end{multline*}
\end{proof}

%  Since the primal function $f(X)$ of \eqref{OT} is strongly convex on $Q:=\{X\in \mathbb{R}^{N\times N}_+: \mathbf{1}^TX\mathbf{1}=1\}$ w.r.t. $1$-norm, we chose $\|\cdot\|_E=\|\cdot\|_1$,  and $\|\cdot\|_H=\|\cdot\|_2$ as Theorem \ref{PD-bounds} requires.

% Each column of the matrix $\bm{A}$ contains no more than 2 non-zero elements, which are equal to one, and $\|\bm{A}\|^2_{E \rightarrow H} = 2$. 
% This leads to the following rates for our Accelerated Sinkhorn Algorithm: %\ref{AS-1}:

% %\small
% \begin{multline}
% \|\bm{A} \hat{x}^k - b \|_2 \leqslant \frac{64\sqrt{N/2}\left(\|C\|_\infty-\frac{\gamma}{2}\ln{\min\limits_{i,j}\{r_i,c_j\}}\right)}{\gamma k^2},\quad \notag
% \\
% |\vp(\eta^k) + f(\hat{x}^k)| \leqslant\frac{32N{\left(\|C\|_\infty-\frac{\gamma}{2}\ln{\min\limits_{i,j}\{r_i,c_j\}}\right)}^2}{\gamma k^2}, \notag \\
% \|\hat{x}_k-x^*\|_1  \leq \frac{16\sqrt{N/2} \left(\|C\|_\infty-\frac{\gamma}{2}\ln{\min\limits_{i,j}\{r_i,c_j\}}\right)}{\gamma k}. \label{eq:APDAAM_bound}
% \end{multline}
% %\normalsize

Next, consider the non-regularized OT problem
\begin{equation}
    \min _{X \in Q\cap\mathcal{U}(r,c)}\langle C, X\rangle.
    \label{eq:OT}
\end{equation}

Let $X^*$ be the solution of the problem \eqref{eq:OT} and $X_{\gamma}^*$ be the solution of the regularized problem 
\begin{equation}
    \min _{X \in Q\cap\mathcal{U}(r,c)}\langle C, X\rangle+\gamma\langle X, \ln X\rangle .
\end{equation} 

Then, we have
\begin{equation}
\label{eq:OTbyGDPr1}
    \la C, \widehat{X} \ra =  \la C, X^* \ra  + \la C, X_{\gamma}^* - X^* \ra 
    +\la C, \widehat{X}_k - X_{\gamma}^* \ra+ \la C, \widehat{X} - \widehat{X}_k \ra . 
\end{equation} 
Now we estimate the second and third term in the r.h.s. 
% \pd{
% \begin{multline}
% \label{eq:OTbyGDPr2}
% 0 \leq \la C, X^*\ra - \gamma H(X^*)-(\la C, X_{\gamma}^*\ra - \gamma H(X_{\gamma}^*)) = \la C, X^*-X_{\gamma}^*\ra + \gamma H(X_{\gamma}^*) - \gamma H(X^*)
% \end{multline}
% }

\begin{multline}
    \label{eq:OTbyGDPr2}
    \la C, X_{\gamma}^* - X^* \ra =   \la C, X_{\gamma}^*\ra - \gamma H(X_{\gamma}^*)  + \gamma H(X_{\gamma}^*) - \min_{X \in \U(r,c)} \la C, X\ra
    \\
    = \min_{X \in \U(r,c)} \{\la C, X\ra - \gamma H(X)\} + \gamma H(X_{\gamma}^*) - \min_{X \in \U(r,c)} \la C, X\ra 
\end{multline}

%\begin{multline}
%\label{eq:OTbyGDPr2}
%\la C, X_{\gamma}^* \ra  = \min_{X \in \U(r,c)} \{\la C, X\ra - \gamma H(X)\} \notag \\
%\leq \min_{X \in \U(r,c)} \la C, X\ra + 2 \gamma \ln n 
%\end{multline}
%and $\la C, X_{\gamma}^* - X^* \ra \leq 2 \gamma \ln n$.

Furthermore, since our algorithm solves problem $(P_1)$ with $f(x) = \la C,X\ra - \gamma H(X)$ and $X_{\gamma}^*$ is the solution, we have
\begin{multline}
    \label{eq:OTbyGDPr3}
    \la C, \widehat{X}_k - X_{\gamma}^* \ra 
    =  \la C, \widehat{X}_k \ra - \gamma H(\widehat{X}_k)) 
     - (\la C, X_{\gamma}^* \ra - \gamma H(X_{\gamma}^*)) + \gamma (H(\widehat{X}_k)-H(X_{\gamma}^*) 
     \\
     \stackrel{\circled{1}}{\leq} f(\hat{x}_k) + \vp(\eta_k) + \gamma (H(\widehat{X}_k)-H(X_{\gamma}^*)),
\end{multline}
where $\circled{1}$ follows from the duality gap bound $f(\hat{x}_k) - f^* \leq f(\hat{x}_k) + \vp(\eta_k)$. 

Then by \eqref{eq:OTbyGDPr3} and \eqref{eq:OTbyGDPr2} we have
\begin{multline*}
    \la C, X_{\gamma}^* - X^* \ra + \la C, \widehat{X}_k - X_{\gamma}^* \ra
    \\
    \leq  
    \min_{X \in \U(r,c)} \{\la C, X\ra - \gamma H(X)\} + \gamma H(X_{\gamma}^*) - \min_{X \in \U(r,c)} \la C, X\ra 
    + f(\hat{x}_k) + \vp(\eta_k) + \gamma (H(\widehat{X}_k)-H(X_{\gamma}^*)).
\end{multline*}
Next we use that $-H(X) \in [-2\ln n,0]$ for any $X \in \U(r,c)$, which implies
\begin{equation}
     \min_{X \in \U(r,c)} \{\la C, X\ra - \gamma H(X)\}  - \min_{X \in \U(r,c)} \la C, X\ra  \leq 0.
    %\leq \min_{X \in \U(r,c)} \la C, X\ra + 2 \gamma \ln n 
\end{equation}
and finally implies 
\begin{equation}
    \label{eq:OTbyGDPr23}
    \la C, X_{\gamma}^* - X^* \ra + \la C, \widehat{X}_k - X_{\gamma}^* \ra
    \leq  f(\hat{x}_k) + \vp(\eta_k) + 2\gamma\ln n.
\end{equation}

Combining \eqref{eq:OTbyGDPr1} and \eqref{eq:OTbyGDPr23}, we obtain
\begin{equation}
\label{eq:OTbyGDPr4}
\la C, \widehat{X} \ra \leq  \la C, X^* \ra  + \la C, \widehat{X} - \widehat{X}_k \ra  + f(\hat{x}_k) + \vp(\eta_k) + 2 \gamma \ln n. 
\end{equation} 
We immediately see that, when the stopping criterion in step 6 of Algorithm \ref{Alg:OTbyAccS} is fulfilled, the output $\widehat{X} \in \mathcal{U}(r,c)$ satisfies $\la C,\widehat{X}\ra - \la C,X^* \ra \leq \eps$.

It remains to obtain the complexity bound. First, we estimate the number of iterations in Algorithm \ref{Alg:OTbyAccS} to guarantee $\la C, \widehat{X} - \widehat{X}_k \ra \leq \frac{\varepsilon}{6}$ and, after that, estimate the number of iterations to guarantee $f(\hat{x}_k) + \vp(\eta_k) \leq \frac{\e}{6}$. By H\"older's inequality, we have $\la C, \widehat{X} - \widehat{X}_k \ra \leq \|C\|_{\infty} \|\widehat{X} - \widehat{X}_k\|_1$. By Lemma 7 in \cite{altschuler2017near-linear}, 
\begin{equation}
\label{eq:OTbyGDPr5}
\|\widehat{X} - \widehat{X}_k\|_1 \leq 2 \left(\| \widehat{X}_k \one - r\|_1+ \| \widehat{X}_k^T \one - c\|_1\right).
\end{equation}
Next, we obtain two estimates for the r.h.s of this inequality. First, by the definition of the operator $A$ and the vector $b$,
\begin{equation}
\label{eq:OTbyGDPr6}
    \| \widehat{X}_k \one - r\|_1 + \| \widehat{X}_k^T \one - c\|_1  \leq \sqrt{2N} \|\bm{A} { \vectr}(\widehat{X}_k) - b\|_2 
     \stackrel{}{\leq} \frac{16R\|\bm{A}\|_{E \to H}^2\sqrt{2N}}{\gamma k^2} \leq \frac{32R\sqrt{2N}}{\gamma k^2}.
\end{equation}
Where we used Theorem~\ref{PD-bounds} and the bound for $R$ defined in Lemma~\ref{L:R}.
Note that the statement of Theorem~\ref{PD-bounds} involves $n$, the number of blocks, which in this case is simply equal to 2. Here we used the choice of the norm $\|\cdot\|_1$ in $E = \R^{n^2}$ and the norm $\|\cdot\|_2$ in $H = \R^{2n}$. Indeed, in this setting $\|\bm{A}\|_{E \to H}$ is equal to the maximum Euclidean norm of a column of $A$. By definition, each column of $A$ contains only two non-zero elements, which are equal to one. Hence, $\|A\|_{E \to H} = \sqrt{2}$.

\iffalse
Secondly, since $X_{\gamma}^* \in \U(r,c)$, we have
\begin{multline}
\| \widehat{X}_k \one - r\|_1 = \| (\widehat{X}_k - X_{\gamma}^*)\one \|_1 \leq \|\widehat{X}_k - X_{\gamma}^*\|_1 \notag
\end{multline}
and a similar estimate for $\| \widehat{X}_k^T \one - c\|_1$. Combining these estimates with Theorem~\ref{Th:PD-Bounds-fixed} and the estimate for $\|\bm{A}\|_{E \to H}$, we obtain
\begin{equation}
\label{eq:OTbyGDPr7}
\| \widehat{X}_k \one - r\|_1+ \| \widehat{X}_k^T \one - c\|_1 \leq 2\|\widehat{X}_k - X_{\gamma}^*\|_1 \stackrel{Theorem~\ref{Th:PD-Bounds-fixed}}{\leq} \frac{16R\sqrt{2}}{\gamma k}. 
\end{equation}

Combining \eqref{eq:OTbyGDPr5}, \eqref{eq:OTbyGDPr6} and \eqref{eq:OTbyGDPr7}, we obtain
$$
\la C, \widehat{X} - \widehat{X}_k \ra \leq 2\|C\|_{\infty} \min \left\{\frac{32R\sqrt{2N}}{\gamma k^2}, \frac{16R\sqrt{2}}{\gamma k} \right\}.
$$
Setting $\gamma = \frac{\e}{3 \ln N}$, we have that, to obtain $\la C, \widehat{X}~-~\widehat{X}_k \ra~\leq~\frac{\e}{6}$, it is sufficient to choose
\begin{equation}
\label{eq:OTbyGDPr8}
k = O\left(\min\left\{\frac{N^{1/4}\sqrt{R\|C\|_{\infty}\ln N}}{\e}, \frac{R\|C\|_{\infty}\ln N}{\e^2}\right\}\right).
\end{equation}
\fi

Combining \eqref{eq:OTbyGDPr5} and \eqref{eq:OTbyGDPr6}  we obtain
$$
\la C, \widehat{X} - \widehat{X}_k \ra \leq 2\|C\|_{\infty} \frac{32R\sqrt{2N}}{\gamma k^2}.
$$
Setting $\gamma = \frac{\e}{3 \ln N}$, we have that, to obtain $\la C, \widehat{X}~-~\widehat{X}_k \ra~\leq~\frac{\e}{6}$, it is sufficient to choose
\begin{equation}
\label{eq:OTbyGDPr8}
k = O\left(\frac{N^{1/4}\sqrt{R\|C\|_{\infty}\ln N}}{\e}\right).
\end{equation}
At the same time, since $\|\bm{A}\|_{E \to H} = \sqrt{2}$, by Theorem~\ref{PD-bounds}, 
$$
f(\hat{x}_k) + \vp(\eta_k) \stackrel{ }{\leq} \frac{32R^2}{\gamma k^2}.
$$
Since we set $\gamma = \frac{\e}{3 \ln N}$, we conclude that in order to obtain $f(\hat{x}_k) + \vp(\eta_k) \leq \frac{\e}{6}$, it is sufficient to choose
\begin{equation}
\label{eq:OTbyGDPr9}
k = O\left(\frac{R\sqrt{\ln N}}{\e}\right).
\end{equation}

To estimate the number of iterations required to reach the desired accuracy, we should take maximum of \eqref{eq:OTbyGDPr8} and \eqref{eq:OTbyGDPr9}. We return to the bound established in Lemma~\ref{L:R}:

\[R\leqslant \sqrt{N/2}\left(\|C\|_\infty-\frac{\gamma}{2}\ln{\min\limits_{i,j}\{r_i,c_j\}}\right).\]

%As it was done in Algorithm 2 of \cite{dvurechensky2018computational}, 
In Algorithm 3 of the main part of the paper we modify the marginals $r,\ c$ to have $\min\limits_{i,j}\{r_i,c_j\}\geqslant \frac{\varepsilon}{64N\|C\|_\infty}$. As it was shown in the proof of Theorem 1 of \cite{altschuler2017near-linear}, the optimal value of this problem differs from the optimal value of the original problem by no more than $2\ln{N}\gamma+\frac{\varepsilon}{2}=\frac{7}{6}\varepsilon$. For the modified problem we hence have the bound \[R\leqslant \sqrt{N/2}\left(\|C\|_\infty-\frac{\varepsilon}{2\ln{N}}\ln{\frac{\varepsilon}{64N\|C\|_\infty}}\right)=O\left(\sqrt{N}\|C\|_\infty\right).\] 
The ratio of the bounds \eqref{eq:OTbyGDPr8} and \eqref{eq:OTbyGDPr9} is equal to $\frac{\sqrt{R}}{N^{1/4}\sqrt{\|C\|_\infty}}$, so from our estimate of $R$ we can see that these bounds are of the same order. Hence, we finally obtain the estimate on the number of iterations 

\[O\left(\frac{N^{1/2}\sqrt{\ln N}\|C\|_\infty}{\varepsilon}\right).\]

Since each iteration requires $O(N^2)$ arithmetic operations, which is the same as in the Sinkhorn's algorithm, we get the total complexity

\[O\left(\frac{N^{5/2}\sqrt{\ln N}\|C\|_\infty}{\varepsilon}\right).\]

We would also like to note that the additional factor $N^{1/2}$ compared to the complexity of the Sinkhorn's algorithm seems to be the result of the very rough estimate of $\|\bm{A} { \vectr}(\widehat{X}_k) - b\|_2$ in \eqref{eq:OTbyGDPr6}, and in our experiments our method scales approximately in the same way as the Sinkhorn's algorithm when increasing the size of the problem $N$. Figure~\ref{fig:N} should illustrate it.

\begin{figure}[!ht]%{r}{\textwidth}
	\centering
	\includegraphics[width=\linewidth]{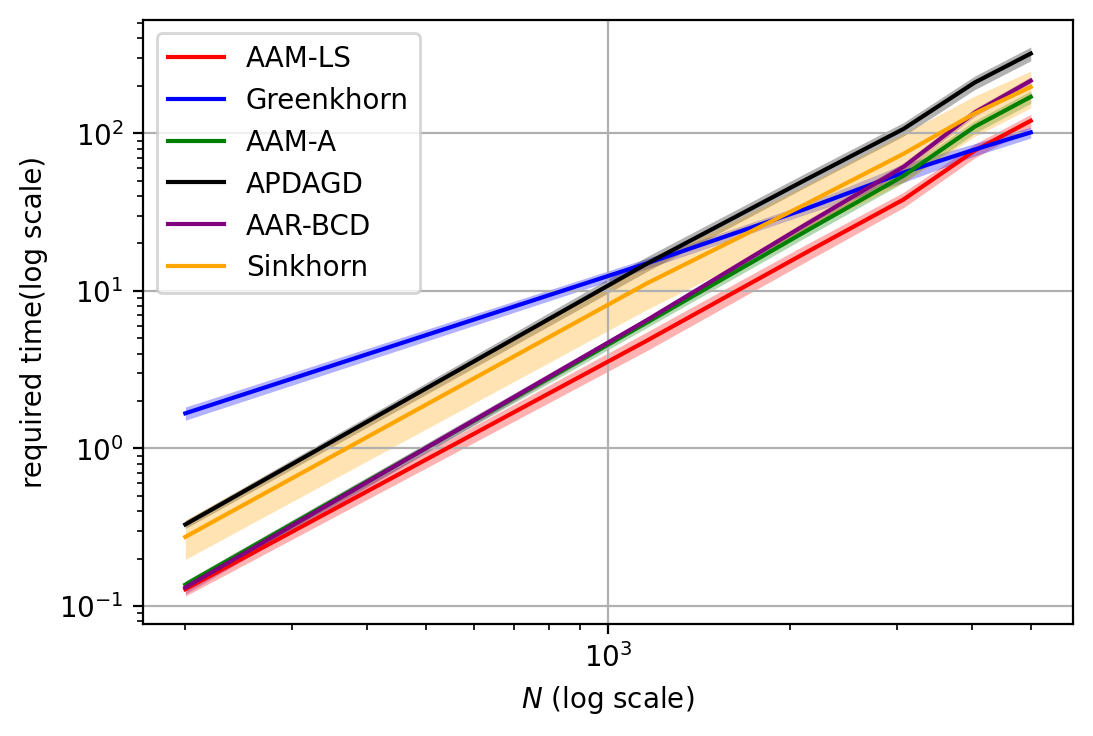}
	\caption{Experiments for OT with $\varepsilon=0.04$ and varying dimension $N$}
	\label{fig:N}
\end{figure}

We also add to comparison the rate of decay of the dual objective in Figure~\ref{fig:dual}.
\begin{figure}[!ht]%{r}{\textwidth}
	\centering
	\includegraphics[width=\linewidth]{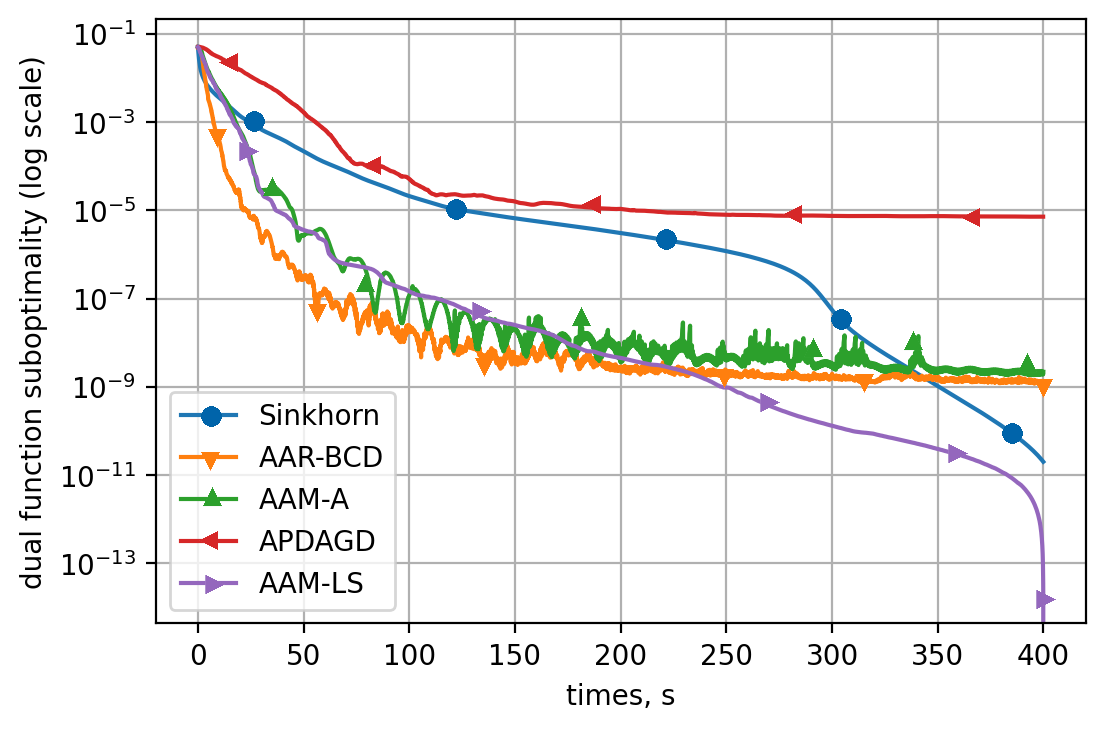}
	\caption{Decrease of the dual objective for $\varepsilon=0.004$, $N=1568$ }
	\label{fig:dual}
\end{figure}

Numerical experiments in \cite{jambulapati2019direct} were performed with an instance of Mirror-prox algorithm. Authors shared their code, and now the python implementation of the method is available at \url{https://github.com/kumarak93/numpy_ot}.
We compared the rate of decay of primal non-regularized function from a transportation plan, which is projected on the feasible set with  Algorithm~2 from \cite{altschuler2017near-linear}. The results is presented in Figure~\ref{mirr}. For AAM-LS algorithm $\e=4e-4$.
\begin{figure}[!ht]%{r}{\textwidth}
	\centering
	\includegraphics[width=\linewidth]{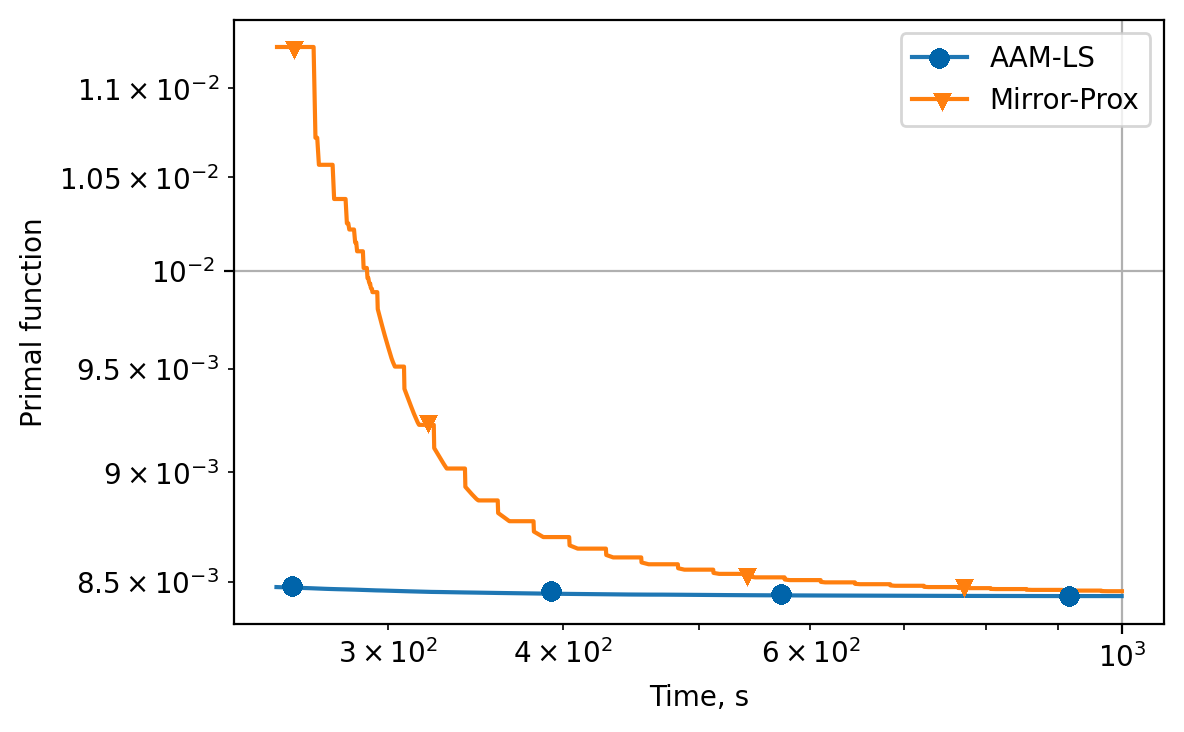}
	\caption{Decrease of the primal non-regularized objective for $\varepsilon=0.00004$, $N=1568$ }
	\label{mirr}
\end{figure}

\section{Accelerating IBP}
\label{S:AIBP}

\subsection{Derivation of the dual entropy-regularized WB problem}

The Iterative Bregman Projections algorithm for solving the regularized Wasserstein Barycenter problem is also an instance of an alternating minimizations procedure \cite{benamou2015iterative,kroshnin19a}. Hence, our accelerated alternating minimizations method may also be used for this problem. 
Denote by $\Delta^N$ the $N$-dimensional probability simplex.
Given two probability measures $p, q$ and a cost matrix $C \in \mathbb{R}_+^{N \times N}$ we define optimal
transportation distance between them as
\[W_{C}(p, q) = \min_{\pi\in\Pi(p,q)} \langle \pi, C\rangle.\]
For a given set of probability measures $p_i$ and cost matrices $C_i$
we define their weighted barycenter with weights $w \in \Delta^m$ as a solution of the following convex optimization problem:\[\min_{q\in\Delta^N} \sum_{i=1}^m w_iW_{C_i}(p_i
, q).\]
We use $c$ to denote $\max\limits_{i=1,\ldots,m} \|C_i\|_{\infty}$. We will also be using the notation $p=[p_1,\ldots,p_m]$. Using the entropic regularization we define  the regularized OT-distance for $\gamma>0$: \[
    W_{C,\gamma}(p, q) = \min_{\pi\in\Pi(p,q)} \langle \pi, C\rangle + \gamma H(\pi),
\]
where $H(\pi) :=\sum_{i, j=1}^{N} \pi_{i j}\ln \pi_{i j}=\left\langle\pi, \ln \pi\right\rangle$. 
One may also consider the regularized barycenter which is the solution to the following problem:
\begin{equation}
    \label{entropy_bar_supp}
    \min _{q \in \Delta^N} \sum_{l=1}^{m} w_{l}\mathcal{W}_{C_l,\gamma}\left(p_{l}, q\right)
\end{equation}

The following lemma is referring to Lemma 1 from  \cite{kroshnin19a}.
\begin{lemma}
The dual (minimization) problem of \eqref{entropy_bar_supp} is 
\begin{equation}
    \label{entropy_bar_dual}
    \min _{\sum_{l} w_{l}v_l=0} \vp(\mathrm{u}, \mathrm{v}),
    \end{equation}
where
\begin{equation}
    \label{WB_dual_supp}
       \min_{
       \substack{
        u,v\\
        \sum_{l=1}^m w_lv_l=0
       }
       }
       \gamma\sum\limits_{l=1}^{m} w_{l}\left\{ \ln\left(\mathbf{1}^T B_{l}\left(u_{l}, v_{l}\right)\mathbf{1}\right)-\left\langle u_{l}, p_{l}\right\rangle\right\}
\end{equation}
$u = [u_1,\ldots , u_m], v = [v_1, \ldots , v_m], u_l,v_l\in\mathbb{R}^N$, and
\[B_l(u_l, v_l) := \diag (e^{u_l}) K_l \diag (e^{v_l})\]
\[ K_{l} =\exp \left(-\frac{C_{l}}{\gamma}\right)\]
Moreover, the solution $\pi^*_\gamma$ to \eqref{entropy_bar_supp} is given by the formula
\[\left[\boldsymbol{\pi}_{\gamma}^{*}\right]_{l}=B_{l}\left(u_{l}^{*}, v_{l}^{*}\right)/\left(\mathbf{1}^TB_l(u^*_l,v^*_l)\mathbf{1}\right),\] where $(u^*, v^*)$ is a solution to the problem \eqref{entropy_bar_dual}.

\end{lemma}
\begin{proof}

Set $Q=\{X\in \mathbb{R}^{N\times N}_+: \mathbf{1}^TX\mathbf{1}=1\}$. In its expanded form, the primal problem takes the following form:
\begin{equation}
    \min_{%
       \substack{%
         \pi_l\in Q\\
         \pi_l\mathbf{1}=p_l\\
         \mathbf{1}^T\pi_1=\ldots=\mathbf{1}^T\pi_m=q\\
       }
     }
     \sum_{l=1}^mw_l\left\{\langle \pi_l,C_l\rangle+\gamma \langle \pi_l,\ln{ \pi_l}\rangle\right\}
     \label{WB}
\end{equation}

% \color{blue}

The above problem is equivalent to the problem 

\begin{equation}
     \min_{\pi_l\in Q}\max_{\lambda_l,\mu_l}
    \sum_{l=1}^m \left(w_l\left\{\langle \pi_l,C_l\rangle+\gamma \langle \pi_l,\ln{ \pi_l}\rangle\right\}
    + \langle\lambda_l,\pi_l\mathbf{1}-p_l\rangle
    \right)+\sum_{l=1}^{m-1}\langle\mu_l,\mathbf{1}^T\pi_l - \mathbf{1}^T\pi_m\rangle,
    \label{WB-dual-init}
\end{equation}

\[
    \min_{\pi_l\in Q}\max_{\lambda_l,\mu_l}
    \sum_{l=1}^mw_l\left\{\langle \pi_l,C_l\rangle+\gamma \langle \pi_l,\ln{ \pi_l}\rangle\right\}
    + \langle\lambda_l,\pi_l\mathbf{1}-p_l\rangle
    +\langle\mu_l,\mathbf{1}^T\pi_l\rangle
\]

where $\mu_m = -\sum_{l=1}^{m-1}\mu_l$.

We introduce new variables $u_l=-\frac{\lambda_l}{\gamma w_l},\ v_l=-\frac{\mu_l}{\gamma w_l}$, $l=1,...,m$.
We can now manipulate each term in the sum above exactly as we did for the optimal transportation problem. This way we arrive at the following problem.

\begin{equation}
   \min_{
   \substack{
    u,v
    \\
    v_m = -\frac{1}{w_m}\sum_{l=1}^{m-1}w_l v_l
  }
   }
   \gamma\sum\limits_{l=1}^{m} w_{l}\left\{ \ln\left(\mathbf{1}^T B_{l}\left(u_{l}, v_{l}\right)\mathbf{1}\right)-\left\langle u_{l}, p_{l}\right\rangle \right\}.
\end{equation}

The constraints $v_m = -\frac{1}{w_m}\sum_{l=1}^{m-1}w_l v_l$ is equivalent to $\sum_{l=1}^m w_lv_l=0$, that leads to final dual minimization problem:
   
\begin{equation}
       \min_{
       \substack{
        u,v\\
        \sum_{l=1}^m w_lv_l=0
       }
       }
       \gamma\sum\limits_{l=1}^{m} w_{l}\left\{ \ln\left(\mathbf{1}^T B_{l}\left(u_{l}, v_{l}\right)\mathbf{1}\right)-\left\langle u_{l}, p_{l}\right\rangle\right\}.
\end{equation}

\end{proof}

\subsection{Deriving IBP algorithm as AM for the dual problem}

The next result is well-known, but we include its proof in here for the sake of completeness: the objective can also be minimized exactly over the variables $u,v$.
\begin{lemma}
Iterations \[u^{k+1}=\argmin_{u}\vp(u,v^k),\, v^{k+1}=\argmin_{v}\vp(u^k,v),\]may be written explicitly as
\[u^{k+1}_l=u^{k}_l+\ln p_l-\ln \left(B_l\left(u_l, v_l\right) \mathbf{1}\right), \]
\[v^{k+1}_l=v^{k}_l+\sum_{j=1}^m w_j\ln(B_j(u^k_j,v^k_j)^T1)-\ln B_l(u_l,v_l)^T\mathbf{1}. \]  

\end{lemma}

\begin{proof}
Since each term in the sum in the objective only depends on one pair of vectors $(u_l,v_l)$, minimizing over $u$ equivalent to minimizing over each $u_l$. We now have to find a solution of 
\[\min_{u_l} \ln{(\mathbf{1}B_l(u_l,v^k_l)\mathbf{1}) -\langle u_l,p_l\rangle}.\]

This is the same problem as in Lemma~\ref{Sinkhorn=AM} with $p_l$ instead of $r$, so the solution has the same form.

To minimize over $v$ we will use Lagrange multipliers:
\begin{multline*}
    L(u,v,\tau) =\gamma\sum\limits_{l=1}^{m} w_{l}\left\{ \ln\left(\mathbf{1}^T B_{l}\left(u_{l}, v_{l}\right)\mathbf{1}\right)-\left\langle u_{l}, p_{l}\right\rangle\right\}
    +\langle\tau,\sum_{l=1}^m w_lv_l\rangle 
    \\
    = \gamma\sum\limits_{l=1}^{m} w_{l}\left\{ \ln\left(\mathbf{1}^T B_{l}\left(u_{l}, v_{l}\right)\mathbf{1}\right)-\left\langle u_{l}, p_{l}\right\rangle-\langle v_l,\frac{1}{\gamma}\tau\rangle\right\}.
\end{multline*}

Again, we can minimize this Lagrangian independently over each $v_l$. By the results from Lemma~\ref{Sinkhorn=AM}, we have
\[v^{k+1}_l=v^{k}_l+\ln\frac{1}{\gamma}\tau-\ln B_l(u_l,v_l)^T\mathbf{1}. \] 
This iterate needs to satisfy the constraint $\sum\limits_{l=1}^m w_lv^{k+1}_l=0$. Assuming that the previous iterate satisfies this constraint, we have an equation for $\tau$: \[\sum_{l=1}^m w_l\ln\frac{1}{\gamma}\tau=\sum_{l=1}^mw_l\ln B_l(u_l,v_l)^T\mathbf{1}.\]
Since $\sum_{l=1}^m w_l=1$, we have \[\ln\frac{1}{\gamma}\tau=\sum_{l=1}^mw_l\ln B_l(u_l,v_l)^T\mathbf{1}.\]
By plugging this into the formula for $v^{k+1}_l$ we obtain the explicit form of the alternating minimization iteration from the statement of the lemma.
\end{proof}

This result allows us to immediately apply our acceleration scheme to this problem. The resulting method is presented as Algorithm~\ref{LSIBP}. We also adopt problem-specific notation: here $\varphi(\cdot)$ denotes the dual objective \eqref{WB_dual_supp}, the first $mN$ coordinates of the dual points $\eta^k,\zeta^k,\lambda^k$ correspond to the coordinate block $u$, the other coordinates -- to the block $v$. For example, $\eta^k_1$ denotes the vector of variables $u_1$ corresponding to the point $\eta^k$, $\eta^k_{m+2}$ denotes the vector of variables $v_2$ corresponding to the point $\eta^k$. The map $x(\lambda)$ defined previously also takes the explicit form $x_l(u,v)=(\mathbf{1}^TB_l(u,v)\mathbf{1})^{-1}B_l(u,v)$ for $l=1,\ldots, m$. 

\iffalse
\begin{algorithm}[H]
\caption{Accelerated Iterative Bregman Projection (Fixed-Step)}
\label{AIBP}
\begin{algorithmic}[1]
   \REQUIRE initial estimate of the Lipschitz constant $L_0$.
   \STATE $A_0=\alpha_0=0$, $\eta_0=\zeta_0=\lambda_0=0$.
   \FOR{$k \geqslant 0$}
   \STATE Set $L_{k+1}=L_{k}/2$
   \WHILE{True}
   \STATE Set $a_{k+1}=\frac{1}{2L_{k+1}}+\sqrt{\frac{1}{4L^2_{k+1}}+a_k^2\frac{L_k}{L_{k+1}}}$
   \STATE Set $\tau_k=\frac{1}{a_{k+1}L_{k+1}}$
\STATE Set $\lambda^{k}=\tau_k \zeta^k+(1-\tau_k)\eta^k$
\STATE Choose $i_k=\argmax\limits_{i\in\{1,2\}} \|\nabla_i \vp(\lambda^k)\|^2$
\IF {$i_k=1$}
\FOR {$l=1,\ldots,m$}
\STATE $\eta^{k+1}_l=\lambda^{k}_l+\ln p_l-\ln \left(Bl\left(\lambda^{k}_1, \lambda^{k}_2\right) \mathbf{1}\right)$
        \STATE $\eta^{k+1}_{m+l}=\lambda^{k}_{m+l}$

\ENDFOR
\ELSE 
\FOR {$l=1,\ldots,m$}
\STATE$\eta^{k+1}_l=\lambda^{k}_l$
        \STATE $\eta^{k+1}_{m+l}=\lambda^{k}_{m+l}+\sum_{j=1}^m w_j\ln(B_j(u^k_j,v^k_j)^T1)-\ln B_l(u_l,v_l)^T\mathbf{1}$
\ENDFOR
\ENDIF
\STATE Set $\zeta^{k+1} = \zeta^{k} - a_{k+1}\nabla \vp(\lambda^k)$
\IF{$\vp(\eta^{k+1})\leqslant \vp(\lambda^{k})-\frac{\|\nabla \vp(\lambda^k)\|^2}{2L_{k+1}}$}
\STATE Set $\hat{x}^{k+1} = \frac{a_{k+1}x(\lambda^{k})+L_ka^2_k\hat{x}^{k}}{L_{k+1}a^2_{k+1}}.$
\STATE \textbf{break}
\ENDIF

\STATE Set $L_{k+1}=2L_{k+1}$.
\ENDWHILE
%\STATE $k = k + 1$
\ENDFOR
\ENSURE Transportation matrices $x_l^{k+1}$, dual point $\eta^{k+1}$.
\end{algorithmic}
%}
\end{algorithm}
\fi

\begin{algorithm}[H]
\caption{Accelerated Iterative Bregman Projection (Line Search)}
\label{LSIBP}
\begin{algorithmic}[1]
   \STATE $A_0=\alpha_0=0$, $\eta_0=\zeta_0=\lambda_0=0$.
   \FOR{$k \geqslant 0$}
        \STATE Set $\beta_k = \argmin\limits_{\beta\in [0,1]} \vp\left(\eta^k + \beta (\zeta^k - \eta^k)\right)$
        \STATE Set $\lambda^{k}=\beta_k \zeta^k+(1-\beta_k)\eta^k$
        \STATE Choose $i_k=\argmax\limits_{i\in\{1,2\}} \|\nabla_i \vp(\lambda^k)\|^2$
        \IF {$i_k=1$}
        \FOR {$l=1,\ldots,m$}
        \STATE $\eta^{k+1}_l=\lambda^{k}_l+\ln p_l-\ln \left(B_l\left(\lambda^{k}_1, \lambda^{k}_2\right) \mathbf{1}\right)$
                \STATE $\eta^{k+1}_{m+l}=\lambda^{k}_{m+l}$
        
        \ENDFOR
        \ELSE 
        \FOR {$l=1,\ldots,m$}
        \STATE$\eta^{k+1}_l=\lambda^{k}_l$
                \STATE $\eta^{k+1}_{m+l}=\lambda^{k}_{m+l}+\sum_{j=1}^m w_j\ln(B_j(u^k_j,v^k_j)^T1)-\ln B_l(u_l,v_l)^T\mathbf{1}$
        \ENDFOR
        \ENDIF
        \STATE Find $a_{k+1}$, $A_{k+1} = A_{k} + a_{k+1}$  from  \[\vp(\lambda^k)-\frac{a_{k+1}^2}{2(A_k+a_{k+1})}\|\nabla \vp(\lambda^k)\|_2^2=\vp(\eta^{k+1})\]\\
        % \STATE  Set 
        \STATE Set $\zeta^{k+1} = \zeta^{k} - a_{k+1}\nabla \vp(\lambda^k)$
        \STATE Set $\hat{x}^{k+1} = \frac{a_{k+1}x(\lambda^{k})+A_k\hat{x}^{k}}{A_{k+1}}.$
\ENDFOR
\ENSURE Transportation matrices $x_l^{k+1}$, dual point $\eta^{k+1}$.
\end{algorithmic}
%}
\end{algorithm}

Note that on each iteration of this method we take a block-wise minimization step over $mN$ variables out of the whole $2mN$ variables, i.e. we are applying our accelerated Alternating Minimization scheme with the number of blocks $n=2$. Since in this case our method has the exact same primal-dual properties as the accelerated method used in \cite{kroshnin19a}, while the complexity of our method only differs by a value dependent only on $n$, which in this case is simply equal to 2, the same complexity analysis applies and our method has the same complexity $O\left(\frac{mN^{5/2}\sqrt{\ln N}\max_l\|C_l\|_\infty}{\varepsilon}\right)$ as the PDAGD method in \cite{kroshnin19a}.

\subsection{Complexity bound for the non-regularized WB problem}

Next we describe how to apply our Algorithm \ref{PDAAM-2} and Theorem \ref{PD-bounds} to find the \textit{non-regularized} WB distance with accuracy $\eps$, i.e. find $\widehat{X} \in \mathcal{U}(r,c)$ s.t. $\la C,\widehat{X}\ra - \la C,X^* \ra \leq \eps$. Algorithm~\ref{Alg:WBbyAccS} is the pseudocode of our new algorithm for approximating the \emph{non-regularized} WB distance.

% \begin{wrapfigure}{r}{0.5\textwidth}
% \vspace{-0.8cm}
% \begin{minipage}{0.5\textwidth}
\begin{algorithm}[!ht]
  \caption{Accelerated IBP}
  \label{Alg:WBbyAccS}
  \begin{algorithmic}[1]
    \REQUIRE{Accuracy $\eps$.}
    \STATE Set $\gamma = \frac{\eps}{2 \ln N}$, $\eps' = \frac{\eps}{8\max_l \|C_l\|_\infty}$.
    
	\STATE 	Set $\tilde{p_l}=\left(1 - \frac{\eps'}{4}\right)\left(p_l + \frac{\eps'}{4N} \one \right)$
    \FOR{$k=1,2,...$}
	\STATE Perform an iteration of Algorithm~\ref{PDAAM-2} for the WB problem with marginals $\tilde{p}$ and calculate $\widehat{X}_l^k$, $l=1,\cdots,m$ and $\eta^k$.
	\STATE Find $\bar q = \sum_{l = 1}^m w_l (\widehat{X}_l^k)^T \one$
	\STATE Calculate $\widehat{X}_l$ as the projection of $\widehat{X}_l^k$ on $\mathcal{U}(\tilde p, \bar q)$ by Algorithm 2 of \cite{altschuler2017near-linear}.
  \STATE {\textbf{if} $\sum_{l = 1}^m w_l \left\{\la C,\widehat{X}_l\ra - \la C,X_l^* \ra \right\} \leq  \frac{\eps}{4}$ and $f(\hat{x}_k)+\phi(\eta_k) \leq \frac{\eps}{4}$}
  \STATE \textbf{then} Return $\widehat{X}$.
	%\ELSE
	%\STATE \textbf{else} $k = k+1$ and continue.
  %\ENDIF
   \ENDFOR
    % \STATE Calculate $B$ by Algorithm~\ref{AS-1} with marginals $\tilde{r}$, $\tilde{c}$ and accuracy $\eps' / 2$.
    % \STATE Find $\widehat{X}$ as the projection of $B$ on $\mathcal{U}(r,c)$ by Algorithm~2 in \cite{altschuler2017near-linear}.
    % \ENSURE $\widehat{X}$.
  \end{algorithmic}
\end{algorithm}
% \end{minipage}
% \vspace{-0.7cm}
% \end{wrapfigure}

Taking the bounds in \eqref{eq:APDAAM_bound} instead of bounds in  \cite{dvurechensky2018computational}[Theorem 3] and repeating the proof steps in \cite{dvurechensky2018computational}[Theorem 4] together with \cite{dvurechensky2018computational}[Theorem 2], we obtain the final bound of the complexity to find an $\eps$-approximation for the non-regularized WB problem to be $O\left(\frac{N^{5/2}\sqrt{\ln N}\|C\|_\infty}{\varepsilon}\right)$. 
% To show this, we equip the primal space $E$ with 1-norm and the dual space $H$ with 2-norm. We define $\bm{A}:\R^{N\times N}\rightarrow\R^{2N}$ as the linear operator defining the linear constraints of the problem \eqref{WB}, which is in this case defined as
% $\bm{A}\vectr{X} = ((X\one)^T, (X^T\one))^T$. Then, $\|\bm{A}\|^2_{1 \rightarrow 2}=2$.
% The dual function $\phi(y,z)$ of \eqref{WB_dual_supp}, as a dual of a strongly convex function on $E$ with chosen 1-norm $\|\cdot\|_1$, is $L$-smooth w.r.t. $\|\cdot\|_2$ with the smoothness parameter $L \leq \|\bm{A}\|^2_{1 \rightarrow 2}/\gamma$ ~\cite{nesterov2005smooth}, \ where $\bm{A}:\R^{N\times N}\rightarrow\R^{2N}$ is the linear operator defining the linear constraints of the problem, which is in the case of the optimal transportation problem defined as
% $\bm{A}\vectr{X} = ((X\one)^T, (X^T\one))^T$, $\|\bm{A}\|_{1 \rightarrow 2} = \max_{u,v} \left\{{\la Au, v\ra : \|u\|_1 = 1, \|v\|_2 = 1}\right\}$. Due to the structure of the matrix, $\|\bm{A}\|^2_{1 \rightarrow 2}=2$ and $L=2/\gamma$.
We need to bound the norm of the solution to the dual problem \eqref{WB-dual-init} since that norm enters the convergence rate in Theorem \ref{PD-bounds}.
 The bound is given by the two following lemmas.
\begin{lemma}
\label{WB_dual_solution_bounds}
Any solution $(u^*,v^*)$ of the problem \eqref{WB_dual_supp} satisfies 
\[\max [u^*_l]_i-\min [u_l^*]_i\leqslant \frac{\|C_l\|\infty}{\gamma}-\ln\min_{i}[p_l]_i,\]
\[\max [v^*_l]_i-\min [v_l^*]_i\leqslant \frac{\|C_l\|\infty}{\gamma}+\sum_{k=1}^m w_k\frac{\|C_k\|_\infty}{\gamma}.\]

\end{lemma}

\begin{proof}
The proof of the first inequality is the same as in Lemma~\ref{OT_dual_norm}, since the derivatives of the objective in the problem \eqref{WB_dual_supp} with respect to $u_l$ have the same for as in the problem \eqref{OT_dual}.

For the dual iterates $v^{k+1}$ we have the formula 
\begin{multline*}
v^{k+1}_l=v^{k}_l+\sum_{j=1}^m w_j\ln(B_j(u^k_j,v^k_j)^T1)-\ln B_l(u_l,v_l)^T\mathbf{1}=\\
=v^{k}_l+\sum_{j=1}^m w_j \ln e^{v^k_j}+\sum_{j=1}^m w_j\ln(K_j^Te^{u^k})-\ln e^{v^k_l}-\ln K_l^Te^{u_l^k}=\\
=\sum_{j=1}^m w_j\ln(K_j^Te^{u_j^k})-\ln K_l^Te^{u_l^k}.
\end{multline*} Since this was derived from the equality of the gradient to zero and holds for any $u^k$, which from now on we will denote as simply $u$, it must also hold for $v^*_l$.
Denote $\nu_j=e^{-\frac{\|C_j\|_\infty}{\gamma}}.$ We then have 
\[\ln\nu_j\langle\mathbf{1},e^{u_j}\rangle\leqslant[\ln(K_j^Te^{u_j})]_i\leqslant \ln\langle\mathbf{1},e^{u_j}\rangle.\]

Then \[\sum_{j=1}^m w_j\ln\nu_j\langle\mathbf{1},e^{u_j}\rangle-\ln\langle\mathbf{1},e^{u_l}\rangle\leqslant[v^*_l]_i\leqslant\sum_{j=1}^m w_j\ln\langle\mathbf{1},e^{u_j}\rangle-\ln\nu_l\langle\mathbf{1},e^{u_l}\rangle.\]

Finally,

\[\max [v^*_l]_i-\min [v_l^*]_i\leqslant-\sum_{j=1}^m w_j\ln\nu_j-\ln\nu_l=\frac{\|C_l\|_\infty}{\gamma}+\sum_{j=1}^m w_j\frac{\|C_j\|_\infty}{\gamma}.\]
\end{proof}

Set $(u^0,v^0)$. Once again, we know the exact value of the smoothness parameter of the dual problem in terms of variables $\lambda_i,\mu_l$, where $i\in\{1,\ldots,m\},l\in\{1,\ldots,m-1\}$. Using the above Lemma we will now derive the bound on the distance to the dual solution in these variables.

\begin{lemma}
With $(\lambda^0,\mu^0)=(0,0)$ there exists a solution of the dual problem~\eqref{WB-dual-init}in the coordinate space $(\lambda,\mu)$ such that
\[
    R^2=\|(\lambda^*,\mu^*)\|^2_2\leqslant N\left(\left(\max_{l}\|C_l\|_\infty-\frac{\gamma}{2}\min_{l,i} [p_l]_i\right)^2+\max_{l}\|C_l\|_\infty^2\right).
\]
% \sg{TODO: Make nicer to look at}
\end{lemma}

\begin{proof}
The coordinates $(\lambda,\mu)$ and $(u,v)$ are connected by the transformation
$\lambda_l=-\gamma w_l u_l$, $l\leq m$, $\mu_i=- \gamma w_i v_i$, $i < m$.

As a function of $(u,v)$ the dual objective $\phi(u,v)$ is invariant under transformations of the form $u_l\to u_l+t_l\mathbf{1}$ with arbitrary $t_l\in\mathbb{R}$, and $v_l\to v_l+s_l\mathbf{1}$ with $s_l$ such that $\sum_{l=1}^m w_ls_l=0$. Hence, there exists a solution $(u^*,v^*)$ such that for $l\in{1,\ldots,m}$ 
\[\max [u^*_l]_i=-\min [u_l^*]_i=\|u^*_l\|_\infty,\] and for $j\in{1,\ldots,m-1}$
\[\max [v^*_j]_i=-\min [v_j^*]_i=\|v^*_j\|_\infty.\]

Using the result of the previous Lemma, we have now guaranteed the existence of a solution $(u^*,v^*)$ such that \[\|u^*_l\|_\infty\leqslant\frac{\|C_l\|_\infty}{2\gamma}-\frac{1}{2}\ln\min_{i}[p_l]_i,\]

\[\|v^*_l\|_\infty\leqslant\frac{\|C_l\|_\infty}{2\gamma}+\sum_{k=1}^m w_k\frac{\|C_k\|_\infty}{2\gamma}.\]

\begin{equation*}
    \|\lambda^*_l\|_\infty=\gamma w_l\|u^*_l\|_\infty\leqslant w_l\left(\frac{\|C_l\|_\infty}{2}-\frac{\gamma}{2}\ln\min_{i}[p_l]_i\right)\leqslant
    \leqslant w_l\left(\max_{l}\|C_l\|_\infty-\frac{\gamma}{2}\min_{l,i} [p_l]_i\right),
\end{equation*}

\begin{equation*}
    \|\mu^*_l\|_\infty=
    \gamma w_l\|v^*_l\|_\infty
    \leqslant w_l \max_{l}\|C_l\|_\infty, \; l \in \{1, \dots, m-1\}
\end{equation*}
% Then if $w_i=\frac{1}{m}$
% \begin{multline}
%     \sum_{j=1}^{m-1} \|\mu^*_l\|_2^2 \leq N \left(\max_{l}\|C_l\|_\infty\right)^2 \sum_{l=1}^{m-1} \left(\sum\limits_{k=1}^l w_k\right)^2 %\leq \sum_{l=1}^{m} \sum\limits_{k=1}^l w_k
%     = N \left(\max_{l}\|C_l\|_\infty\right)^2 \frac{(m-1)(m-2)}{6m}
% \end{multline}

Finally, \begin{multline*}
    \|(\lambda^*,\mu^*)\|_2^2
    =\sum_{l=1}^m \|\lambda_l\|_2^2+\sum_{j=1}^{m-1} \|\mu^*_j\|_2^2
    \leqslant N\left(\sum_{l=1}^m \|\lambda_l\|_\infty^2+\sum_{j=1}^{m-1} \|\mu^*_j\|_\infty^2\right)
    \\
    \leqslant N\left(\left(\max_{l}\|C_l\|_\infty-\frac{\gamma}{2}\min_{l,i} [p_l]_i\right)^2+\max_{l}\|C_l\|_\infty^2\right)
\end{multline*}
\end{proof}

Next, consider the non-regularized WB problem
\begin{equation}
    \min _{ \substack{X \in Q\\ \bm{A} { \vectr(X)} = b}}\sum_{l = 1}^m w_l \langle C_l, X_l\rangle,
    \label{eq:WB}
\end{equation}
where $\bm{A}  \vectr(X) = (X_1\one, \cdots, X_m\one, (X_1^T\one - X_{m}^T\one), (X_2^T\one - X_{m}^T\one), \cdots,  (X_{m-1}^T\one - X_{m}^T\one))^T$ and $b = (p_1, \cdots, p_m, 0, \cdots, 0)^T$

Let $X^*$ be the solution of the problem \eqref{eq:WB} and $X_{\gamma}^*$ be the solution of the regularized problem 
\begin{equation}
    \min _{ \substack{X \in Q\\ \bm{A} { \vectr(X)} = b}}\sum_{l = 1}^m w_l \langle C_l, X_l\rangle+\gamma\langle X_l, \ln X_l\rangle .
\end{equation} Then, we have

\begin{equation}
    \label{eq:WBbyGDPr1}
    \sum_{l = 1}^m w_l \la C_l, \widehat{X}_l \ra 
    =  \sum_{l = 1}^m w_l \left\{\la C_l, X_l^* \ra  + \la C_l, {X_l}_{\gamma}^* - X_l^* \ra  +\la C_l, \widehat{X}_l^k - {X_l}_{\gamma}^* \ra+ \la C_l, \widehat{X}_l - \widehat{X}_l^k \ra \right\}. 
\end{equation}
Now we estimate the second and third term in the r.h.s. 
% \pd{
% \begin{multline}
% \label{eq:WBbyGDPr2}
% 0 \leq \la C_l, X_l^*\ra - \gamma H(X_l^*)-(\la C_l, {X_l}_{\gamma}^*\ra - \gamma H({X_l}_{\gamma}^*)) = \la C_l, X_l^*-{X_l}_{\gamma}^*\ra + \gamma H({X_l}_{\gamma}^*) - \gamma H(X_l^*)
% \end{multline}
% }

\begin{multline}
    \label{eq:WBbyGDPr2}
    \sum_{l = 1}^m w_l \la C_l, {X_l}_{\gamma}^* - X_l^* \ra 
    =   \sum_{l = 1}^m w_l \left\{ \la C_l, {X_l}_{\gamma}^*\ra - \gamma H({X_l}_{\gamma}^*)  + \gamma H({X_l}_{\gamma}^*) \right\}
    - \min _{ \substack{X \in Q\\ \bm{A} { \vectr(X)} = b}}\sum_{l = 1}^m w_l \la C_l, X_l\ra
    \\
    = \min _{ \substack{X \in Q\\ \bm{A} { \vectr(X)} = b}}\sum_{l = 1}^m w_l \left\{\la C_l, X_l\ra 
    - \gamma H(X_l)\right\} 
    - \min _{ \substack{X \in Q\\ \bm{A} { \vectr(X)} = b}}\sum_{l = 1}^m w_l \la C_l, X_l\ra 
    + \gamma \sum_{l = 1}^m w_l H({X_l}_{\gamma}^*) 
\end{multline}

%\begin{multline}
%\label{eq:WBbyGDPr2}
%\la C_l, {X_l}_{\gamma}^* \ra  = \min_{X_l \in \U(r,c)} \{\la C_l, X_l\ra - \gamma H(X_l)\} \notag \\
%\leq \min_{X_l \in \U(r,c)} \la C_l, X_l\ra + 2 \gamma \ln n 
%\end{multline}
%and $\la C_l, {X_l}_{\gamma}^* - X_l^* \ra \leq 2 \gamma \ln n$.

Furthermore, since our algorithm solves problem $(P_1)$ with $f(x) = \sum_{l = 1}^m w_l \left\{ \la C_l,X_l\ra - \gamma H(X_l)\right\}$ and ${X_l}_{\gamma}^*$ is the solution, we have
\begin{multline}
    \label{eq:WBbyGDPr3}
    \sum_{l = 1}^m w_l \la C_l, \widehat{X}_l^k - {X_l}_{\gamma}^* \ra 
    \\
    =  \sum_{l = 1}^m w_l \left\{\la C_l, \widehat{X}_l^k \ra - \gamma H(\widehat{X}_l^k)\right\}    
    - \sum_{l = 1}^m w_l \left\{\la C_l, {X_l}_{\gamma}^* \ra - \gamma H({X_l}_{\gamma}^*)\right\}
    + \gamma \sum_{l = 1}^m w_l \left\{ H(\widehat{X}_l^k)-H({X_l}_{\gamma}^*)\right\}
    \\
     \stackrel{\circled{1}}{\leq} f(\hat{x}_k) + \vp(\eta_k) + \gamma \sum_{l = 1}^m w_l \left\{ H(\widehat{X}_l^k)-H({X_l}_{\gamma}^*)\right\},
\end{multline}
where $\circled{1}$ follows from the duality gap bound $f(\hat{x}_k) - f^* \leq f(\hat{x}_k) + \vp(\eta_k)$. 

Then by \eqref{eq:WBbyGDPr3} and \eqref{eq:WBbyGDPr2} we have
\begin{multline*}
    \sum_{l = 1}^m w_l \left\{ \la C_l, {X_l}_{\gamma}^* - X_l^* \ra + \la C_l, \widehat{X}_l^k - {X_l}_{\gamma}^* \ra\right\}
    \\
    \leq  
    \min _{ \substack{X \in Q\\ \bm{A} { \vectr(X)} = b}}\sum_{l = 1}^m w_l \left \{\la C_l, X_l\ra - \gamma H(X_l)\right\} 
    + \gamma \sum_{l = 1}^m w_l H({X_l}_{\gamma}^*) 
    - \min _{ \substack{X \in Q\\ \bm{A} { \vectr(X)} = b}}\sum_{l = 1}^m w_l  \la C_l, X_l\ra 
    \\
    + f(\hat{x}_k) + \vp(\eta_k) + \gamma \sum_{l = 1}^m w_l  \left \{ H(\widehat{X}_l^k)-H({X_l}_{\gamma}^*)\right\}.
\end{multline*}

Next we use that $-H(X_l) \in [-2\ln n,0]$ for any $X_l \in Q$, which implies
\begin{equation}
     \min _{ \substack{X \in Q\\ \bm{A} { \vectr(X)} = b}}\sum_{l = 1}^m w_l \left \{\la C_l, X_l\ra - \gamma H(X_l) \right\}  - \min _{ \substack{X \in Q\\ \bm{A} { \vectr(X)} = b}}\sum_{l = 1}^m w_l \la C_l, X_l\ra  \leq 0.
    %\leq \min_{X_l \in \U(r,c)} \la C_l, X_l\ra + 2 \gamma \ln n 
\end{equation}
and finally implies 
\begin{equation}
    \label{eq:WBbyGDPr23}
    \sum_{l = 1}^m w_l \left\{\la C_l, {X_l}_{\gamma}^* - X_l^* \ra + \la C_l, \widehat{X}_l^k - {X_l}_{\gamma}^* \ra\right\}
    \leq  f(\hat{x}_k) + \vp(\eta_k) + 2\gamma\ln n.
\end{equation}

Combining \eqref{eq:WBbyGDPr1} and \eqref{eq:WBbyGDPr23}, we obtain
\begin{equation}
    \label{eq:WBbyGDPr4}
    \sum_{l = 1}^m w_l \la C_l, \widehat{X}_l \ra \leq  \sum_{l = 1}^m w_l \la C_l, X_l^* \ra  + \sum_{l = 1}^m w_l \la C_l, \widehat{X}_l - \widehat{X}_l^k \ra  + f(\hat{x}_k) + \vp(\eta_k) + 2 \gamma \ln n. 
\end{equation}
We immediately see that, when the stopping criterion in step 6 of Algorithm \ref{Alg:WBbyAccS} is fulfilled, the output $\widehat{X}_l \in \{X \in Q | \bm{A} { \vectr(X)} = b\}$ satisfies $\sum_{l = 1}^m w_l \la C,\widehat{X}_l\ra - \sum_{l = 1}^m w_l \la C,X_l^* \ra \leq \eps$.

\color{black}

It remains to obtain the complexity bound. First, we estimate the number of iterations in Algorithm \ref{Alg:WBbyAccS} to guarantee $\sum_{l = 1}^m w_l \la C_l, \widehat{X}_l - \widehat{X}_l^k \ra \leq \frac{\varepsilon}{4}$ and, after that, estimate the number of iterations to guarantee $f(\hat{x}_k) + \vp(\eta_k) \leq \frac{\e}{4}$. 

Denote $q_l=(\widehat{X}_l^k)^T \one.$
From the scheme of~\cite{kroshnin19a} and since $\|\bm{A}  \vectr(X) -b \|_1 = \sum_{l = 1}^{m} \| q_l - q_{l+1}\|_1 $ after an update of $u$ variables we have
\begin{multline}
    \label{eq:WBbyGDPr5}
    \sum_{l = 1}^m w_l \la C_l, \widehat{X}_l - \widehat{X}_l^k \ra 
    \leq \max_{l}\|C_l\|_\infty \sum_{l = 1}^m w_l \|\widehat{X}_l - \widehat{X}_l^k\|_1
    \\
    \leq 2 \max_{l}\|C_l\|_\infty \sum_{l = 1}^m w_l \left(\|(\tilde{p}_l - p_l \|_1 + \| (\widehat{X}_l^k)^T \one - \bar q \|_1 \right) 
    \\
    \leq 2 \max_{l}\|C_l\|_\infty \e'
    + 2 \max_{l}\|C_l\|_\infty \max_l w_l  \|\bm{A}  \vectr(X) -b \|_1.
\end{multline}

It remains to show that $2\max_{l}\|C_l\|_\infty \max_l w_l  \|\bm{A}  \vectr(X) -b \|_1 \leq \eps /4$.

By Theorem~\ref{PD-bounds} 
\[
    \|\bm{A}  \vectr(X) -b \|_1 \leq \frac{16R\|\bm{A}\|_{E \to H}^2\sqrt{2N}}{\gamma k^2}.
\]

Setting 
\begin{equation}
    \frac{16RL\sqrt{2N}}{k^2}
    \\
    =
    \frac{16R\|\bm{A}\|_{E \to H}^2\sqrt{2N}}{\gamma k^2}
    \leqslant \frac{\eps}{8\max_{l}\|C_l\|_\infty \max_l w_l },
    \label{want}
\end{equation}

together with the choice of $\gamma = \frac{\eps}{2 \ln N}$ and since $\|\bm{A}\|_{E \to H} = \sqrt{2}$%(\todo{seems it is also $\sqrt{2}$ for WB})
, we have that, to obtain $\la C, \widehat{X}_l~-~\widehat{X}_l^k \ra~\leq~\frac{\e}{4}$, it is sufficient to choose

\begin{equation}
    \label{eq:WBbyGDPr8}
    k = O\left(\frac{N^{1/4}\sqrt{\|C_l\|_\infty \max_l w_l R\|C\|_{\infty}\ln N}}{\e}\right).
\end{equation}

At the same time, by Theorem~\ref{PD-bounds}, 
$$
f(\hat{x}_k) + \vp(\eta_k) \stackrel{ }{\leq} \frac{32R^2}{\gamma k^2}.
$$
Since we set $\gamma = \frac{\e}{2 \ln N}$, we conclude that in order to obtain $f(\hat{x}_k) + \vp(\eta_k) \leq \frac{\e}{4}$, it is sufficient to choose
\begin{equation}
\label{eq:WBbyGDPr9}
k = O\left(\frac{R\sqrt{\ln N}}{\e}\right).
\end{equation}

To estimate the number of iterations required to reach the desired accuracy, we should take maximum of \eqref{eq:WBbyGDPr8} and \eqref{eq:WBbyGDPr9}. We return to the bound established in Lemma~\ref{L:R}:

\[
    R^2=\|(\lambda^*,\mu^*)\|^2_2\leqslant N\left(\left(\max_{l}\|C_l\|_\infty-\frac{\gamma}{2}\min_{l,i} [\tilde{p}_l]_i\right)^2+\max_{l}\|C_l\|_\infty^2\right)
\]

or one can write
\[
    R=O\left(\sqrt{N}\|C\|_\infty\right).
\] 
The ratio of the bounds \eqref{eq:WBbyGDPr8} and \eqref{eq:WBbyGDPr9} is equal to $\frac{\sqrt{R}}{N^{1/4}\sqrt{\max_l w_l \|C\|_\infty}}$, so from our estimate of $R$ we can see that these bounds are of the same order. Hence, we finally obtain the estimate on the number of iterations 

\[O\left(\frac{N^{1/2}\sqrt{\ln N}\|C\|_\infty}{\varepsilon}\right).\]

Since each iteration requires $O(mN^2)$ arithmetic operations, which is the same as in the IBP algorithm, we get the total complexity

\[O\left(\frac{mN^{5/2}\sqrt{\ln N}\|C\|_\infty}{\varepsilon}\right).\]

\section{Implementation Details}

Looking through the proof of convergence for Algorithm~\ref{AAM-2} one can notice that line search subroutine need to fulfill two conditions: $\langle \nabla f(y^k), v^k - y^k \rangle \geqslant 0$ and $f(y^k)\leqslant f(x^k)$. We got significant increase of performance, when were using these condition as a stopping criteria for line search subroutine. Another increase of performance came from the observation that the value of $\beta$ satisfying the condition is often close to $\frac{k-1}{k+2}$, the value appearing in Nesterov's type accelerated methods \cite{JMLR:v17:15-084}. The other observation is that the value of $\beta$ satisfying the conditions frequently does not change from iteration to iteration with the same parity. So we use the value $\beta_{t-2}$ as a starting point for the line search subroutine to find $\beta_t$ on $t$-th iteration. These and other implementation details are available on
\url{https://github.com/nazya/AAM}

\end{document}